\tikzstyle{vertex} = [fill,shape=circle,node distance=80pt]
\tikzstyle{edge} = [opacity=0.4,fill opacity=0.0,line cap=round, line join=round, line width=40pt]
\tikzstyle{elabel} =  [fill,shape=circle,node distance=30pt]
\tikzstyle{circ} = [draw, fill=SpringGreen, circle, inner sep=0.12cm]
\tikzstyle{square} = [draw, fill=RedOrange, rectangle, inner sep=0.15cm]
\tikzstyle{triangle} = [draw, fill=Turquoise, regular polygon, regular polygon sides=3, inner sep=0.08cm]
\theoremstyle{plain}
\newtheorem{theorem}{Theorem}[section]		
\newtheorem{lemma}[theorem]{Lemma}
\newtheorem{claim}[theorem]{Claim}
\newtheorem{proposition}[theorem]{Proposition}
\newtheorem{corollary}[theorem]{Corollary}
\newtheorem{conjecture}[theorem]{Conjecture}
\newtheorem{example}[theorem]{Example}
\newtheorem{question}[theorem]{Question}
\newtheorem{definition}[theorem]{Definition}
\theoremstyle{remark}
\newtheorem*{remark}{Remark}
\newcommand{\cC}{\mathcal{C}}
\newcommand{\cF}{\mathcal{F}}
\newcommand{\cH}{\mathcal{H}}
\newcommand{\cO}{\mathcal{O}}
\newcommand{\cA}{\mathcal{A}}
\newcommand{\cT}{\mathcal{T}}
\newcommand{\cE}{\mathcal{E}}
\newcommand{\bZ}{\mathbb{Z}}
\DeclareMathOperator{\poly}{poly}
\DeclareMathOperator{\RS}{RS}
\DeclareMathOperator{\Eq}{\cE}
\DeclareMathOperator{\Eqs}{\cE}
\DeclareMathOperator{\wrap}{wrap}
\DeclarePairedDelimiter{\abs}{\lvert}{\rvert}
\DeclarePairedDelimiter{\set}{\{}{\}}
\renewcommand{\emptyset}{\varnothing}
\renewcommand{\le}{\leqslant}
\renewcommand{\leq}{\leqslant}
\renewcommand{\ge}{\geqslant}
\renewcommand{\geq}{\geqslant}
\newcommand{\eps}{\ensuremath{\varepsilon}}
\newcommand{\defn}[1]{\textcolor{Maroon}{\emph{#1}}}
\newcommand{\e}{\ensuremath{\varepsilon}}
\newcommand{\cP}{\mathcal{P}}
\let\originalleft\left
\let\originalright\right
\renewcommand{\left}{\mathopen{}\mathclose\bgroup\originalleft}
\renewcommand{\right}{\aftergroup\egroup\originalright}
\def\imod#1{\allowbreak\mkern10mu({\operator@font mod}\,\,#1)}
\newcommand{\urlprefix}{}
\title{Abundance: Asymmetric Graph Removal Lemmas and Integer Solutions to Linear Equations}
\author{Ant\'onio Gir\~ao}
\author{Eoin Hurley}
\author{Freddie Illingworth}
\author{Lukas Michel}
\thanks{
AG \& LM: Mathematical Institute, University of Oxford, Oxford OX2 6GG, UK. E-mail: \textsf{\{girao,michel\}\allowbreak@maths.ox.ac.uk}. Research of AG supported by EPSRC grant EP/V007327/1.}
\thanks{
EH: Korteweg de Vries Instituut, Universiteit van Amsterdam,  Science Park 904, 1098 XH Amsterdam , The Netherlands. E-mail: \textsf{eoin.hurley@umail.ucc.ie}}
\thanks{FI: Department of Mathematics, University College London, 25 Gordon Street, WC1H 0AY, UK. E-mail: \textsf{f.illingworth@ucl.ac.uk}}
\begin{document}

\maketitle

\begin{abstract}
    We prove that a large family of pairs of graphs satisfy a polynomial dependence in asymmetric graph removal lemmas. In particular, we give an unexpected answer to a question of Gishboliner, Shapira, and Wigderson~\cite{GSW23} by showing that for every $t \geq 4$, there are $K_t$-abundant graphs of chromatic number $t$.
    Using similar methods, we also extend work of Ruzsa~\cite{Ruzsa} by proving that a set $\cA \subset \set{1,\dots,N}$ which avoids solutions with distinct integers to an equation of genus at least two has size $\cO(\sqrt{N})$.
    The best previous bound was $N^{1 - o(1)}$ and the exponent of $1/2$ is best possible in such a result.
    Finally, we investigate the relationship between polynomial dependencies in asymmetric removal lemmas and the problem of avoiding integer solutions to equations. 
    The results suggest a potentially deep correspondence. Many open questions remain.
\end{abstract}

\section{Introduction}

Graph theory and additive combinatorics are distinct areas of discrete mathematics with a deep and interesting interplay. 
One particularly elegant example is the interplay between the \emph{triangle removal lemma} and sets of integers avoiding $3$-term arithmetic progressions.
The former is an influential result in extremal graph theory proved in the seminal paper of Ruzsa and Szemer\'{e}di~\cite{RS78}. It states: 
\begin{quote}
    If an $n$-vertex graph has $o(n^3)$ triangles, then all triangles can be deleted by removing $o(n^2)$ edges.
\end{quote}
Contrapositively, if an $n$-vertex graph requires the deletion of at least $\e n^2$ edges to be made triangle-free, then it contains at least $\delta n^3$ triangles where $\delta = \delta(\eps) > 0$ depends only on $\eps$. 
In many applications it would be useful if the function $\delta(\eps)$ is not too small and, in particular, if it is at least some polynomial.
Unfortunately, this is not the case.
 Ruzsa and Szemer\'{e}di made brilliant use of  Behrend's construction~\cite{Behrend} of large sets $\cA\subset [N]\coloneqq \set{1,\dots,N}$ with no non-trivial $3$-term arithmetic progressions to prove a sub-polynomial upper bound on $\delta(\eps)$. 
If one is interested in removal lemmas for graphs other than triangles, then one must consider large sets of integers avoiding additive structures other than $3$-term arithmetic progressions. 
This provides a link between polynomial dependence in removal lemmas and the question: 
\begin{quote}
    How large can a set of integers $\cA\subset [N]$ be if it has no non-trivial solutions to some linear diophantine equation?
\end{quote}
Such problems are central to additive combinatorics and were systematically studied by Ruzsa~\cite{Ruzsa} who investigated the properties of equations $E$ that determine the size of the largest subset of $[N]$ containing no non-trivial solutions to $E$.

{In this paper we make progress in three directions: 
\begin{itemize}
    \item We develop a number of tools to construct graphs with polynomial dependencies of $\delta$ on $\eps$ in \emph{asymmetric} graph removal lemmas. 
    This yields a surprising answer to a question posed by Gishboliner, Shapira and Wigderson~\cite{GSW23}, uncovering richer behaviour than anticipated.
    \item We extend the work of Ruzsa~\cite{Ruzsa} by proving that a set $\cA \subset [N]$ which avoids solutions with distinct integers to an equation of genus at least two has size $\cO(\sqrt{N})$.
    \item We probe the relationship between these two problems, proving both positive and negative results. 
\end{itemize}}  
We now provide a miniature introduction for each direction, before proving the relevant results for the three bullet points in \cref{sec:Abundance,sec:additive,,sec:eqtographs} respectively.

\subsection*{Notation} All logarithms are natural unless explicitly stated otherwise. 
We always consider $\eps$ to be a positive real number much smaller than $1$.
We use the standard $o(\cdot),\cO(\cdot)$ and $\Omega(\cdot)$ notation, and use $x(y) = \poly(y)$ as shorthand for \emph{there exists polynomials $P$ and $Q$ such that $P(y)< x(y) < Q(y)$ for all relevant $y$}. We denote the number of edges and vertices of a graph $G$ by $e(G)$ and $\abs{G}$, respectively. Moreover, we use $P_k$ to denote the path on $k$ vertices.

\subsection{Asymmetric graph removal lemma}

Generalising the triangle removal lemma, Erd\H{o}s, Frankl, and R\"{o}dl~\cite{EFR} proved the \emph{graph removal lemma}: if an $n$-vertex graph cannot be made $F$-free by deleting $\eps n^2$ edges, then it contains at least $\delta n^{\abs{F}}$ copies of $F$ where $\delta > 0$ depends only on $\eps$ and $F$.
The aforementioned bound of Ruzsa and Szemer\'{e}di shows that, for triangles and many other graphs, $\delta$ must be sub-polynomial in $\e$ (in fact, at most $\eps^{\Omega(\log(\eps^{-1}))}$).
There is particular interest in determining for which $F$ we have $\delta = \poly(\eps)$. 
If this is the case, then the removal lemma is said to be \defn{efficient} as it leads to algorithms for efficient property testing. 
Polynomial dependence in removal lemmas has been studied not only for subgraphs but also in the settings of arithmetic cycles \cite{fox2017tight,fox2018polynomial}, induced subgraphs~\cite{AlonFoxinduced,AlonShapirainduced}, digraphs~\cite{AlonShapiradirected}, ordered graphs~\cite{GTordered}, matrices~\cite{AlonBenEliezermatrix,AFNmatrix}, posets~\cite{poset}, and graph properties~\cite{GSproperties}.

Alon~\cite{Alondependence} proved that the only graphs for which $\delta = \poly(\eps)$ are bipartite graphs. Gishboliner, Shapira, and Wigderson~\cite{GSW23}, building on work of Csaba \cite{Csaba}, observed that the situation is more complicated for \defn{asymmetric} removal lemmas, where the graph that cannot be eliminated by deleting $\e n^2$ edges and the graph of which there are many copies are different. 
They showed that if an $n$-vertex graph cannot be made $C_{2k - 1}$-free by deleting $\eps n^2$ edges, then it contains at least $\poly(\eps) \cdot n^{2k + 1}$ copies of $C_{2k + 1}$. 
To capture such pairs of graphs with a polynomial dependency, Gishboliner, Shapira, and Wigderson defined a graph $H$ to be \defn{$F$-abundant} if any $n$-vertex graph that cannot be made $F$-free by deleting $\eps n^2$ edges must contain at least $\poly(\e) \cdot n^{\abs{H}}$ copies of $H$. 
Being $F$-abundant is a natural property that is preserved under taking subgraphs and blow-ups (see \cref{lem:abundant_facts}). The obvious question of course is which graphs are $F$-abundant?

Certainly we require that $H$ is homomorphic $F$, written $H \to F$ (see \cref{lem:abundant_facts}).
Since $F$-abundance is preserved under taking blow-ups and subgraphs, if $G$ is $F$-abundant and $H$ is homomorphic to $G$, then $H$ is $F$-abundant. 

Therefore, we can summarise the positive results of \cite{GSW23} as follows: for all $k\geq 2$, if $H$ is homomorphic to $C_{2k+1}$, then $H$ is $C_{2k-1}$-abundant. 
While this is a surprising result, odd cycles are a very particular family of graphs with many unusual properties.
Thus it is natural to wonder, given the results of \cite{GSW23}, if abundance is just another quirk of odd cycles.
Indeed, Gishboliner, Shapira, and Wigderson asked \cite[Problem~1.9]{GSW23} whether there are any $K_t$-abundant graphs of chromatic number $t$ for $t \geq 4$ and mentioned that they were inclined to believe that the answer was no. 
We show that the answer to the question  is \emph{yes} in a strong sense: $F$-abundant graphs have no restriction whatsoever on their homomorphic images beyond the trivial requirement that they are homomorphic to $F$. 

\begin{theorem}\label{thm:FabundantchiF}
    Let $F$ and $G$ be graphs with $F \nrightarrow G$. There is an $F$-abundant graph $H$ with $H \nrightarrow G$.
\end{theorem}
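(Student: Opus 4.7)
The naive attempt $H = F$ fails by Alon's result~\cite{Alondependence} whenever $F$ is non-bipartite, and taking blow-ups or subgraphs of $F$ keeps us inside the homomorphism class of $F$, which cannot help when $G$ is just outside this class. The key structural point is that the class of $F$-abundant graphs is closed downwards in the homomorphism order (as noted after \cref{lem:abundant_facts}), so producing a single $F$-abundant graph $H$ with $H \nrightarrow G$ will suffice.

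The plan is to construct an $F$-abundant graph $H$ satisfying the stronger property that $F \to H$. Given this, any hypothetical homomorphism $H \to G$ would compose with $F \to H$ to yield $F \to G$, contradicting the hypothesis; hence $H \nrightarrow G$. To build such an $H$, one starts with a simple $F$-abundant graph (for instance, a bipartite graph homomorphic to $F$, which should be $F$-abundant by a standard supersaturation argument) and enriches it by gluing or taking a suitable product with $F$, using the abundance-preserving operations developed in \cref{sec:Abundance}. The resulting graph will, by design, contain $F$ as a homomorphic image while still mapping to $F$.

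The main obstacle is quantitative. Each step of the construction must preserve the polynomial counting that defines $F$-abundance: if the host graph is $\eps$-far from $F$-free, the enriched $H$ must still appear $\poly(\eps) \cdot n^{\abs{H}}$ times. Blow-ups, subgraphs, and disjoint unions preserve abundance but do not raise homomorphic complexity, so the crucial new ingredient is an abundance-preserving operation that genuinely enlarges the homomorphism core while admitting a polynomial counting lemma. This lemma, which I expect is proved via a regularity/supersaturation argument stitching copies of $F$ inside the host graph into copies of the enriched $H$, is the technical heart of the argument and is where the difficulty of \cref{thm:FabundantchiF} really lies.
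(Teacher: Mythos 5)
Your central strategy is unworkable. You propose to build an $F$-abundant $H$ with $F \to H$, so that $H \to G$ would force $F \to G$. But $F$-abundance is closed under homomorphic preimages (\cref{lem:abundant_facts}, part 3): if $H$ is $F$-abundant and $F \to H$, then $F$ itself would be $F$-abundant, which by part 2 of the same lemma happens only when $F$ is bipartite. So for non-bipartite $F$ — exactly the case where the theorem has content, since for bipartite $F$ one can just take $H = F$ — no $F$-abundant $H$ with $F \to H$ can exist. The observation you open with, that abundant graphs are downward-closed in the homomorphism order, is precisely what rules out your plan: it pins every $F$-abundant graph strictly below $F$ in that order, and your $H$ would have to sit at the level of $F$.

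The actual argument must therefore get $H \nrightarrow G$ without $H$ dominating $F$ homomorphically. The paper does this by iterating a gluing operation $(H,\sigma)_e$ (take two disjoint copies of an $F$-coloured graph and join the colour classes of an edge $e$ of $F$ crosswise) over the edges of $F$, producing a nested sequence $H^1, H^2, \ldots$ of $F$-splittable (hence $F$-abundant, by \cref{thm:splittable_abundant}) graphs, each still homomorphic to $F$ but of growing ``width.'' A direct pigeonhole argument on the sets of $G$-colours appearing at each $F$-coordinate (\cref{lem:Hm_homomorphisms}) shows that for $m$ large a hypothetical homomorphism $H^m \to G$ would force two non-adjacent vertices of $G$ to be adjacent, using $F \nrightarrow G$ to locate the offending pair. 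That combinatorial extraction of a contradiction, not a composition $F \to H \to G$, is the missing idea, and it is not replaceable by any enrichment that ``enlarges the homomorphism core'' — the core cannot be enlarged.
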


In particular, taking $G$ to be the complete graph on $\chi(F) - 1$ vertices shows that for any graph $F$ there is an $F$-abundant graph with the same chromatic number as $F$.
By standard arguments one can also take  $H$  to have arbitrarily high girth (see \cref{cor:abundant_girth}). 
Our method of proof for \cref{thm:FabundantchiF} employs a number of glueing type operations (see \cref{lem:peel_abundant,lem:glue_blowup_abundant})
on graphs that we show maintain $F$-abundance. 
This gives lots of flexibility when constructing $F$-abundant graphs,
and shows that the family of pairs of graphs that satisfy an abundance relation is very rich indeed (see \cref{thm:splittable_abundant}). 

For the simplest abundant pair, which is $C_5$ and $K_3$, we also demonstrate how to prove effective bounds on the degree of the polynomial dependency. 
This analysis also extends to the sparse case, where $\eps$ may depend on $n$, see \cref{sec:effective}.

\subsection{Avoiding solutions to equations in the integers}

Consider the equations 
\begin{equation*}
    x + y = 2z \quad \text{ and } \quad x + y = z + w.
\end{equation*}
An integer solution to the first is a $3$-term arithmetic progression, while the second is the famed Sidon equation~\cite{Sidon}. 
It is known that any set $\cA\subset [N]$ that contains no non-trivial solution to the Sidon equation has size at most $\cO(\sqrt{N})$ while there exist sets with no $3$-term arithmetic progressions of size $N^{1-o(1)}$.

In order to understand the source of this wildly differing behaviour, Ruzsa~\cite{Ruzsa} initiated the systematic study of \defn{translation-invariant equations}, namely equations
\begin{equation}\label{eq:translationinvariant}
    a_1 x_1 + a_2 x_2 + \dots + a_k x_k = 0
\end{equation}
with non-zero integer coefficients satisfying $a_1 + \dots + a_k = 0$.
Such equations always have trivial solutions where all variables are equal. For a translation-invariant equation $E$, Ruzsa defined the two quantities (see \cref{sec:additive} for the definitions of non-trivial, genus, and symmetric in what follows)
\begin{align*}
    r_E(N) & = \max\set{\abs{\cA} : \cA \subset [N], \text{there is no non-trivial solution to $E$ with the $x_i$ all in $\cA$}}, \\
    R_E(N) & = \max\set{\abs{\cA} : \cA \subset [N], \text{there is no solution to $E$ with the $x_i$ all distinct and in $\cA$}}.
\end{align*}
For any equation $E$, $r_E(N) \leq R_E(N) \leq N (\log N)^{-\alpha_E}$ for some positive constant $\alpha_E$, but this upper bound may be far from tight as in the case of the Sidon equation. 
In fact, Ruzsa showed that if $E$ has genus $m$, then $r_E(N) = \cO(N^{1/m})$.
In the special case that $E$ is a symmetric equation in at least $4$ variables (and so has genus at least two), Ruzsa showed that $R_E(N) = \cO(\sqrt{N})$. However, for a general equation $E$ of genus at least two it was not known whether there is any such improvement for the upper bound on $R_E$. We show that there is.
\begin{theorem}\label{thm:genus2sqrt}
    Let $E$ be an equation of genus at least $2$. Then $R_E(N) = \cO(\sqrt{N})$.
\end{theorem}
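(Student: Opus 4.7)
The plan is to count solutions to $E$ in $\cA^k$ in two ways. Let $E : \sum_{i=1}^{k} a_i x_i = 0$ have genus at least $2$ and fix a zero-sum partition $[k] = S_1 \sqcup S_2$ (so $\sum_{i \in S_j} a_i = 0$); since each $a_i \neq 0$, each $|S_j| \geq 2$, so $k \geq 4$. Suppose $\cA \subset [N]$ has $|\cA| = M$ and admits no solution to $E$ with all $x_i$ distinct. I aim to show that the total number of solutions $N_E(\cA)$ in $\cA^k$ satisfies $N_E(\cA) \geq c_E M^k / N$, while the number of solutions with some coincidence $x_i = x_j$ is $O_E(M^{k-2})$; for $M \geq C(E)\sqrt{N}$ these two bounds are incompatible, forcing an all-distinct solution.

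For the upper bound I group solutions by their coincidence pattern $\pi$, the partition of $[k]$ for which $i \sim_\pi j$ iff $x_i = x_j$. Solutions with pattern $\pi$ other than the all-singletons partition biject with all-distinct solutions in $\cA^{|\pi|}$ of the reduced equation with coefficients $c_B = \sum_{i \in B} a_i$ for $B \in \pi$. If every $c_B$ vanishes then $\pi$ is itself a zero-sum partition of $E$, so $|\pi|$ is at most the genus of $E$, contributing at most $M^{\mathrm{genus}(E)}$ solutions. Otherwise the reduced equation has at least one nonzero coefficient, so fixing all but one variable pins down the last one to within $O_E(1)$ values, yielding at most $O_E(M^{|\pi|-1}) \leq O_E(M^{k-2})$ all-distinct solutions. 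Summing over the $O_k(1)$ non-identity patterns, and noting that $\mathrm{genus}(E) \leq k/2 \leq k-2$, gives the total $O_E(M^{k-2})$ bound.

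For the lower bound I use Fourier analysis: embed $[N]$ into $\bZ/q\bZ$ for a prime $q \asymp N$ and let $f$ be the indicator function of $\cA$. Then $N_E(\cA) = q^{-1} \sum_\xi g_1(\xi) g_2(\xi)$, with $g_j(\xi) = \prod_{i \in S_j} \hat{f}(a_i \xi)$, and the principal frequency contributes the main term $M^k/q \asymp M^k/N$. The non-principal frequencies are controlled by a Cauchy--Schwarz step using that each $|g_j(\xi)|^2$ is the Fourier coefficient of the non-negative symmetrised companion $E_j^{\mathrm{sym}} : \sum_{i \in S_j} a_i(x_i - x_i') = 0$, so $\sum_\xi |g_j(\xi)|^2$ equals $q$ times the number of $E_j^{\mathrm{sym}}$-solutions, which is controllable from the block structure. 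The genus-$\geq 2$ hypothesis is exactly what provides the two-block factorisation $g_1 \cdot g_2$ of the Fourier integrand; without it, no such Cauchy--Schwarz control is available.

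Combining the two estimates, for $M \geq C\sqrt{N}$ with $C = C(E)$ sufficiently large one obtains $c_E M^k/N > O_E(M^{k-2})$, contradicting the assumption and yielding $R_E(N) = O(\sqrt{N})$. The main obstacle is establishing the Fourier lower bound on $N_E(\cA)$ without assuming $E$ is symmetric in Ruzsa's sense: in pseudo-random regimes the Fourier tail is automatically small, but in structured cases (for example when $\cA$ concentrates on an arithmetic progression) one may need to complement the Fourier argument with a density-increment step, passing to a sub-progression where the argument runs cleanly and iterating along the genus partition.
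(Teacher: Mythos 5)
Your strategy is a count-and-compare argument: show the total number of solutions in $\cA^k$ is at least $c_E M^k/N$, show the degenerate (coinciding-variable) solutions number only $O_E(M^{k-2})$, and conclude that for $M \geq C\sqrt{N}$ a solution with all variables distinct must exist. The degenerate-solution upper bound is fine: your case analysis on the coincidence pattern $\pi$ (all reduced coefficients zero forces $|\pi| \leq \mathrm{genus}(E) \leq k/2 \leq k-2$; some nonzero reduced coefficient pins one variable, giving $\leq M^{|\pi|-1} \leq M^{k-2}$) is correct.

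The lower bound, however, is a genuine gap, and you acknowledge it yourself. For symmetric equations $g_1$ and $g_2$ are nonnegative, so $\sum_\xi g_1 g_2 \geq g_1(0)g_2(0) = M^k$ is immediate; that is essentially Ruzsa's argument for symmetric $E$. For a non-symmetric genus-$2$ equation, the products $g_1(\xi)g_2(\xi)$ need not be nonnegative and the Cauchy--Schwarz bound you invoke is not sufficient: with $\sum_{\xi\neq 0} |g_j(\xi)|^2 = q T_j - M^{2|S_j|}$ where $T_j$ counts solutions to the symmetrised companion, the resulting tail estimate $q\sqrt{T_1 T_2}$ can dominate the main term $M^k$ as soon as $\cA$ has additive structure (e.g.\ when $\cA$ is an arithmetic progression $T_j \asymp M^{2|S_j|-1}$, so the tail bound is of order $qM^{k-1} \gg M^k$ when $M \asymp \sqrt{N}$), and even in the pseudo-random regime it is borderline for $k$ near $4$. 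The proposed fix --- a density-increment pass to a sub-progression along the genus partition --- is a substantial additional argument that is not supplied, and it is not clear that it closes at this level of generality. So the lower bound $N_E(\cA) \gg M^k/N$ for all genus-$\geq 2$ equations remains an interesting open claim in your write-up rather than a proof.

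The paper avoids Fourier analysis entirely. It writes $E$ as $\sum_{i=1}^s a_i x_i + \sum_{j=1}^t b_j y_j = 0$ with both blocks zero-sum and constructs an auxiliary $(s+t-1)$-partite graph in which each ordered pair $(x,y)$ of distinct elements of $\cA$ contributes one edge-disjoint transversal path of length $s+t-2$; any transversal path corresponds to a solution of $E$ with entries in $\cA$. Then it deletes vertices of low path-degree to reach a subgraph of minimum degree $\gg M^2/N$, which exceeds $2(s+t)$ once $M \geq C\sqrt{N}$, and greedily extends a path one vertex at a time, at each step choosing a neighbour that introduces a fresh variable value. The greedy step is exactly where distinctness is secured --- directly, without any counting comparison. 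This is both more elementary and more robust than the Fourier route, and it cleanly sidesteps the structured/pseudo-random dichotomy you flag as your main obstacle.
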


In fact we also show that there are many such solutions, see \cref{cor:abundance}.
We note that for any integer $\ell \geq 2$ and any $\eps > 0$, Ruzsa showed that there is an equation $E$ of genus $\ell$ for which $R_E(N) = \Omega(N^{1/2 - \eps})$.
Thus the exponent in \cref{thm:genus2sqrt} is the best one can hope for.

\subsection{From graphs to equations and back}

An $F$-coloured graph $(G,\sigma)$ is a graph $G$ along with a homomorphism $\sigma \colon G \to F$, and one can analogously define $F$-abundance for $F$-coloured graphs (see \cref{sec:unifFabundant}).
Ruzsa and Szemer\'{e}di~\cite{RS78} provided a way of associating a collection of systems of equations $\cE(G, \sigma)$ to an  $F$-coloured graph, with each system $S \in \cE(G, \sigma)$ containing one equation for each cycle of the graph.
This translated the problem of proving non-$F$-abundance of an $F$-coloured-graph to the problem of proving that there exists a set $\cA \subset [N]$ of size $N^{1 - o(1)}$ containing few solutions (with distinct integers) to any of the systems of equations. 
We thus extend the definition of $R_E(N)$ to systems of equations in the obvious way, where a solution to a system is simultaneously a solution to all of the equations. 
Further, we say a system of equations $S$ is \defn{avoidable} if  $R_S(N) = N^{1 - o(1)}$.
In this language, the Ruzsa-Szemer\'{e}di construction shows:
\begin{quote}
    \textit{If a coloured graph $(G, \sigma)$ is $F$-abundant, then every system $S \in \cE(G, \sigma)$ is unavoidable.} 
\end{quote}
The only method we have for proving non-abundance is this construction of Ruzsa and Szemer\'{e}di, and the only equations that are known to be avoidable are \defn{convex} equations~\cite[Thm.~2.3]{Ruzsa}, in which all but one of the coefficients in \eqref{eq:translationinvariant} have the same sign, such as $x + y = 2z$.
\Cref{thm:genus2sqrt} says that equations of genus at least 2 are unavoidable.
This leaves open the case of equations of genus $1$ that are not convex. 
Indeed, Ruzsa remarked that the following\footnote{In fact he asked a stronger question but \cite{Bukh} proved this to be false.}
remained a possibility, although there was ``too little evidence to call it a conjecture''.
\begin{question}[{\cite[(9.1)]{Ruzsa}}]\label{qu:Ruzsa}
    Are all genus $1$ equations avoidable?
\end{question}
The answer to Ruzsa's question has immediate implications for graph removal lemmas. 
For example, as far as  we know, it may be that all tripartite triangle-free graphs are $K_3$-abundant. However, Gishboliner, Shapira, and Wigderson~\cite{GSW23} showed that \emph{if the answer to Ruzsa's question is yes}, then non-abundant tripartite triangle-free graphs exist.
Indeed, they showed that an appropriately chosen random graph would bear witness. 
Nevertheless, they also asked whether one could \emph{unconditionally} show that a non-abundant tripartite triangle-free graph exists. 
They suggested a way to do this via convex equations.
We make two contributions here. 
We show that the proposed method to use convex equations cannot work (see \cref{lem:convex_not_possible}) and we provide a small explicit example of a tripartite triangle-free graph that is non-abundant \emph{if} Ruzsa's question has a positive answer. 

Both within this paper and outside of it, the connections between equations and graphs have thus far been one way, using avoidable equations to prove non-abundance of graphs. 
It therefore remained possible that the additive angle is simply a convenient way of proving non-abundance and nothing more. 
We show that the connection between the two questions is deeper than that.
As some setup is required we only informally state our two results here (see \cref{sec:eqtographs} for details).
Firstly, we show that in the case of coloured cycles, a family of graphs containing both abundant and non-abundant graphs, we have a complete picture (see \cref{thm:cycleabundance}) which can be paraphrased as:
\begin{quote}
    For an $F$-coloured cycle $(C,\sigma)$, the avoidability of $\cE(C,\sigma)$ determines the $F$-abundance of $(C,\sigma)$.
\end{quote}
Of course, cycles are a particularly simple family, but we also prove a result for general $K_3$-coloured graphs (see \cref{lem:symmetric_is_path_hom}) which can be paraphrased as:
\begin{quote}
    For a $K_3$-coloured graph $(G, \sigma)$, if $\cE(G, \sigma)$ is symmetric (in Ruzsa's sense), then $G$ is triangle-abundant. 
\end{quote}
In fact we show a stronger if and only if statement, but that requires some setup.

\section{\texorpdfstring{$F$}{F}-abundant graphs}\label{sec:Abundance}

We start by formally introducing abundance, giving some of its basic properties, and by developing useful machinery. 
With this in hand we prove that certain glueing type operations (see \cref{lem:peel_abundant,lem:glue_blowup_abundant}) maintain abundance which allows us to construct a large family of $F$-abundant graphs (see \cref{thm:splittable_abundant}). 
We use this to resolve some open cases such as showing that the Petersen graph is triangle-abundant and prove the main result of this section, \cref{thm:FabundantchiF}.

We start with some useful definitions. An $n$-vertex graph is \defn{$\eps$-far from $F$-free} if it cannot be made $F$-free by deleting fewer than $\eps n^2$ edges. $F$-abundance can now be phrased as follows.

\begin{definition}[$F$-abundant]\label{def:Fabundance}
    Let $H$ and $F$ be graphs. $H$ is \defn{$F$-abundant} if for all $\eps > 0$ and for all sufficiently large\footnote{The `sufficiently large' condition was not stated explicitly in \cite{GSW23} but was implicitly used there.} $n$, every $n$-vertex graph that is $\eps$-far from $F$-free contains $\poly(\eps) \cdot n^{\abs{H}}$ copies of $H$. The polynomial may only depend on $H$ and $F$.
\end{definition}

There is a simple condition that $H$ must satisfy to have any chance of being $F$-abundant. A \defn{blow-up} of a graph $F$ is obtained by replacing each vertex $i$ of $F$ by a non-empty independent set $V_i$ and each edge $ij$ by a complete bipartite graph between parts $V_i$ and $V_j$. Blow-ups of $F$ where each $V_i$ has the same size are $\eps$-far from $F$-free and so, for $H$ to have any chance of being $F$-abundant, $H$ must be a subgraph of a blow-up of $F$. This last condition is very natural in the language of graph homomorphisms.

\begin{definition}[Homomorphisms]\label{def:Fpartite}
    A graph $G$ is \defn{$F$-partite} if there is a partition $(V_i : i \in V(F))$ of $V(G)$ into independent sets such that if $uv \in E(G)$ for $u \in V_i$ and $v \in V_j$, then $ij \in E(F)$. The partition $(V_i : i \in V(F))$ is an \defn{$F$-partition} of $G$.

    The map $\sigma \colon G \to F$ defined so that $v \in V_{\sigma(v)}$ for every vertex of $G$ is called an \defn{$F$-colouring} of $G$ or a \defn{homomorphism} from $G$ to $F$. If $G$ has an $F$-colouring, then we say that $G$ is \defn{homomorphic} to $F$ and denote this by \defn{$G \to F$}.
\end{definition}

We now collect together some basic facts about $F$-abundance before developing some machinery in the next section.

\begin{lemma}\label{lem:abundant_facts}
    Let $H$ and $F$ be graphs.
    \begin{enumerate}[label = \arabic{*}.]
        \item If $H$ is $F$-abundant, then $H \to F$.
        \item $F$ is $F$-abundant if and only if $F$ is bipartite.
        \item $F$-abundance is preserved under taking subgraphs and blow-ups. In particular, if $H$ is $F$-abundant and $H' \to H$, then $H'$ is $F$-abundant.
        \item If $e(F) > 0$, then every bipartite graph is $F$-abundant. 
    \end{enumerate}
\end{lemma}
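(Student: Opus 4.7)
The plan is to prove the four parts in order, each via elementary counting plus (for one part) a cited external result. For Part 1, I would consider the balanced blow-up $F[t]$ on $n = t \abs{F}$ vertices: double-counting the $t^{\abs{F}}$ transversal copies of $F$ (each edge lying in at most $t^{\abs{F} - 2}$ of them) shows that $\Omega(n^2)$ edge deletions are needed to destroy all copies of $F$, so $F[t]$ is $\eps_0$-far from $F$-free for some $\eps_0 = \eps_0(F) > 0$. $F$-abundance of $H$ then yields a copy of $H$ inside $F[t]$ for large $t$, and sending each vertex of this copy to its part gives a homomorphism $H \to F$.

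For Part 2, the forward direction uses that being $\eps$-far from $F$-free implies at least $\eps n^2$ edges, combined with a standard K\H{o}v\'ari--S\'os--Tur\'an-style supersaturation count: for bipartite $F$, $\eps n^2$ edges give $\poly(\eps) \cdot n^{\abs{F}}$ copies of $F$. The reverse direction -- non-bipartite $F$ is not $F$-abundant -- is exactly Alon's dependence theorem~\cite{Alondependence}, which I would cite rather than reprove.

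For Part 3, subgraph closure follows by a short double count: each of the $\poly(\eps) \cdot n^{\abs{H}}$ copies of $H$ contains a copy of any fixed $H' \subseteq H$, and each copy of $H'$ lies in at most $n^{\abs{H}-\abs{H'}}$ copies of $H$, yielding $\poly(\eps) \cdot n^{\abs{H'}}$ copies of $H'$. Blow-up closure is the standard supersaturation principle that $\delta n^{\abs{H}}$ copies of $H$ force $\poly(\delta) \cdot n^{t \abs{H}}$ copies of $H[t]$. The ``in particular'' clause is then immediate: if $H' \to H$ then $H'$ embeds as a subgraph of $H[\abs{H'}]$, so it inherits $F$-abundance through the two closure properties.

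For Part 4, since $e(F) \geq 1$, a graph $\eps$-far from $F$-free has at least $\eps n^2$ edges, so $K_2$ is trivially $F$-abundant; any bipartite $H$ satisfies $H \to K_2$, so the ``in particular'' clause of Part 3 finishes. The only step I do not see how to prove from scratch is the reverse direction of Part 2, where invoking Alon's theorem (whose own proof uses a Behrend-type Ruzsa--Szemer\'edi construction) appears unavoidable; everything else is routine counting together with one standard supersaturation lemma.
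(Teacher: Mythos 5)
Your proposal is correct and follows essentially the same approach as the paper: Part~1 via balanced blow-ups of $F$ being far from $F$-free, Part~2 via Alon's theorem, Part~3 via double-counting for subgraphs plus standard supersaturation for blow-ups, and Part~4 via the triviality of a single edge plus Part~3. The only cosmetic difference is that you re-derive the forward direction of Part~2 with a direct K\H{o}v\'ari--S\'os--Tur\'an supersaturation count, whereas the paper cites Alon's biconditional for both directions of Part~2 and obtains ``bipartite $F$ is $F$-abundant'' as a special case of Part~4.
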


\begin{proof}
    Note that $H$ is $F$-partite if and only if $H$ is a subgraph of some blow-up of $F$. This together with the discussion preceding \cref{def:Fpartite} proves part 1. Part 2 is just a restatement of Alon's result~\cite{Alondependence} that there is polynomial dependence in the $F$-removal lemma if and only if $F$ is bipartite.

    Suppose an $n$-vertex graph $G$ contains $\eps n^{\abs{H}}$ copies of $H$. Let $H'$ be a subgraph of $H$. Each copy of $H'$ in $G$ is contained in at most $n^{\abs{H} - \abs{H'}}$ copies of $H$ and so $G$ contains at least $\eps n^{\abs{H'}}$ copies of $H'$. Now let $H''$ be a blow-up of $H$. A standard supersaturation argument (such as \cite[Lem.~2.1]{Keevash2011hypergraphsurvey}) together with Erd\H{o}s's result on the extremal function for complete $\ell$-uniform $\ell$-partite hypergraphs~\cite{Erdos1964hypextremal} shows that $G$ contains $\poly(\eps) \cdot n^{\abs{H''}}$ copies of $H''$. Thus $F$-abundance is preserved under taking subgraphs and blow-ups which proves part 3.

    Finally, if $e(F) > 0$, then a single edge is plainly $F$-abundant. Every bipartite graph is homomorphic to an edge and so is $F$-abundant by part 3.
\end{proof}

\subsection{Canonical structure of graphs that are \texorpdfstring{$\e$}{eps}-far from \texorpdfstring{$F$}{F}-free}\label{sec:unifFabundant}

To understand $F$-abundance it will first be useful to understand the structure of graphs $G$ that are $\eps$-far from $F$-free. We will show that we may pass to an $F$-partite subgraph of $G$ that is $\Omega(\eps)$-far from $F$-free and has some additional structure that will be helpful for proving $F$-abundance.

We say that a graph $G$ is \defn{uniformly $\e$-far from $F$-free} if it has an $F$-partition $(V_i : i \in V(F))$ and a collection $\cC$ of edge-disjoint copies of $F$ such that
\begin{itemize}
    \item for every copy of $F$ in $\cC$ and each $i \in V(F)$, the vertex corresponding to $i$ is in $V_i$, and
    \item every vertex of $G$ is in at least $\e \cdot \abs{G}$ copies of $F$ from $\cC$.
\end{itemize}
We first note a few simple properties of graphs that are uniformly $\e$-far from $F$-free.

\begin{lemma}\label{lem:euniffacts}
    Let $G$ be uniformly $\e$-far from $F$-free and let $(V_i : i \in V(F))$ be the corresponding $F$-partition of $G$. Then,
    \begin{itemize}
        \item $G$ is $\e/\abs{F}$-far from $F$-free,
        \item for all $i \in V(F)$, every vertex of $V_i$ has at least $\e \cdot \abs{G}$ neighbours in $V_j$ for every neighbour $j$ of $i$ in $F$, and
        \item $\abs{V_i} \geq \e \cdot \abs{G}$ for every non-isolated vertex $i$ of $F$.
    \end{itemize}
\end{lemma}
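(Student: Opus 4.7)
The plan is to verify each of the three bullets by a short double-counting argument that exploits the two defining features of $\cC$: edge-disjointness and the compatibility of each copy with the partition $(V_i : i \in V(F))$. A useful preliminary observation is that because every $V_i$ is disjoint, whenever $v \in V_i$ lies in a copy $F_0 \in \cC$, the vertex of $F_0$ playing the role of $i$ must be $v$ itself.

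For the first bullet, I would double count pairs (vertex of $G$, copy in $\cC$ containing it). The vertex-side count is at least $\eps \abs{G}^2$ by hypothesis, while the copy-side count is exactly $\abs{F} \cdot \abs{\cC}$, so $\abs{\cC} \ge \eps \abs{G}^2 / \abs{F}$. Since the copies of $F$ in $\cC$ are edge-disjoint, any $F$-free subgraph of $G$ must omit at least one edge from each copy in $\cC$, and hence $G$ is $\eps/\abs{F}$-far from $F$-free.

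For the second bullet, I would fix $i \in V(F)$, $v \in V_i$, and a neighbour $j$ of $i$ in $F$. By the preliminary observation, $v$ plays the role of $i$ in every copy of $\cC$ containing it, so each such copy contributes an edge from $v$ to some vertex in $V_j$. Edge-disjointness then forces these at least $\eps \abs{G}$ edges to have pairwise distinct endpoints, yielding the claimed lower bound on the number of neighbours of $v$ in $V_j$. The third bullet is then an immediate corollary: for a non-isolated $i \in V(F)$, pick any neighbour $j$ of $i$ and note that $V_j$ must be nonempty, as otherwise $\cC$ would be empty, contradicting the hypothesis for all $n$ with $\eps \abs{G} \ge 1$. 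Applying the second bullet to any $u \in V_j$ with respect to the neighbour $i$ then gives $\abs{V_i} \ge \eps \abs{G}$.

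There is essentially no obstacle in the proof: the lemma is a direct unpacking of the definition via double counting and edge-disjointness. The only minor subtlety is in the third bullet, where one has to observe that $\cC$ is nonempty (which holds automatically for $\abs{G}$ sufficiently large relative to $1/\eps$) in order to deduce that $V_j$ is nonempty before applying the second bullet.
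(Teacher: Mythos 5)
Your proof is correct and follows essentially the same route as the paper's: bound $\abs{\cC}$ from below using the per-vertex count of copies (the paper pigeonholes onto the largest part while you double count over all vertices, but the bound $\abs{\cC} \geq \eps\abs{G}^2/\abs{F}$ and its use are the same), then appeal to edge-disjointness for the $\eps/\abs{F}$-farness; the second and third bullets match the paper verbatim in substance. The only difference is that you explicitly note $V_j$ is nonempty before invoking the second bullet in the third part, a point the paper glosses over; your caveat about $\abs{G}$ being large relative to $1/\eps$ is actually unnecessary, since for any nonempty $G$ and $\eps > 0$ each vertex is in at least $\eps\abs{G} > 0$, hence at least one, copy of $F$, so $\cC$ is nonempty and every part corresponding to a non-isolated vertex of $F$ is nonempty.
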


\begin{proof}
    Let $V_i$ be the largest part in the $F$-partition, so $\abs{V_i} \geq \abs{G}/\abs{F}$. Each vertex of $V_i$ is in at least $\e \cdot \abs{G}$ copies of $F$ from $\cC$ which implies that
    \begin{equation*}
        \abs{\cC} \geq \abs{V_i} \cdot \e \cdot \abs{G} \geq \e/\abs{F} \cdot \abs{G}^2.
    \end{equation*}
    To make $G$ $F$-free requires deleting an edge from each copy of $F$ in $\cC$, and these copies are edge-disjoint. Hence, at least $\abs{\cC}$ edges need deleting and so $G$ is $\e/\abs{F}$-far from $F$-free.

    Fix two adjacent vertices $i$ and $j$ of $F$ and let $v \in V_i$. There are at least $\e \cdot \abs{G}$ copies of $F$ in $\cC$ that contain $v$ and every such copy also contains a vertex $u$ in $V_j$ that is adjacent to $v$. As the copies of $F$ in $\cC$ are edge-disjoint, the vertices $u$ in $V_j$ are all distinct and so $v$ has at least $\e \cdot \abs{G}$ neighbours in $V_j$.
    
    Finally, let $i$ be a non-isolated vertex of $F$. Let $j$ be a neighbour of $i$ in $F$ and fix some $v \in V_j$. Vertex $v$ has at least $\e \cdot \abs{G}$ neighbours in $V_i$ and so $\abs{V_i} \geq \e \cdot \abs{G}$.
\end{proof}

\Cref{lem:euniffacts} shows that graphs which are uniformly far from $F$-free are far from $F$-free (up to a multiplicative loss of $\abs{F}$). Next we prove a converse: every graph that is far from $F$-free contains a large subgraph that is uniformly far from $F$-free. In particular, graphs which are uniformly $\e$-far from $F$-free are canonical for graphs that are $\e$-far from $F$-free with only a constant multiplicative loss in the parameter $\e$, and so we can use such graphs for showing $F$-abundance.

\begin{lemma}\label{lem:eunifsubgraph}
    Let $G$ be an $n$-vertex graph that is $\e$-far from $F$-free. Then $G$ has a subgraph $G'$ of order at least $(\e/\abs{F}^{\abs{F}})^{1/2} \cdot n$ that is uniformly $\e/(2 \cdot \abs{F}^{\abs{F}})$-far from $F$-free.
\end{lemma}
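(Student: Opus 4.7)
The plan is to reduce to an $F$-partite subgraph containing many edge-disjoint aligned $F$-copies, and then to prune vertices lying in few copies so that the survivor is uniformly far from $F$-free. First I would take a maximum edge-disjoint packing $\cC$ of copies of $F$ in $G$. Maximality forces every $F$-copy in $G$ to share an edge with some $F^\ast \in \cC$, so deleting the $e(F) \cdot \abs{\cC}$ edges appearing in $\cC$ renders $G$ $F$-free; since $G$ is $\e$-far, this yields $\abs{\cC} \ge \e n^2 / e(F)$. I would then assign each $v \in V(G)$ an independent uniformly random label $\sigma(v) \in V(F)$ and, after fixing an isomorphism $\phi_{F^\ast} \colon V(F) \to V(F^\ast)$ for each $F^\ast \in \cC$, call $F^\ast$ aligned when $\sigma \circ \phi_{F^\ast}$ is the identity. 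Each copy is aligned with probability $\abs{F}^{-\abs{F}}$, so some outcome of $\sigma$ yields an aligned sub-packing $\cC_0$ with $\abs{\cC_0} \ge \abs{\cC}/\abs{F}^{\abs{F}}$. Discarding all edges $uv$ of $G$ with $\sigma(u)\sigma(v) \notin E(F)$ gives an $F$-partite spanning subgraph $G_0$ with parts $V_i = \sigma^{-1}(i)$ that still contains $\cC_0$.

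To enforce uniformity, I would iteratively delete from the current graph $G'$ (initially $G_0$) any vertex lying in fewer than $\delta \cdot \abs{G'}$ copies from $\cC_0 \cap G'$, where $\delta$ is chosen at the scale of the target uniform-far parameter $\e/(2\abs{F}^{\abs{F}})$. Since only vertices are deleted, $\cC_0 \cap G'$ remains edge-disjoint and aligned throughout, so the terminal graph $G'$ inherits the $F$-partition and is uniformly $\delta$-far from $F$-free by construction. For the size bound on $n' = \abs{G'}$, each removal kills at most $\delta \cdot \abs{G'}$ copies, so totalling over the at-most $n$ removals loses at most $\delta n^2$ copies. On the other hand, edge-disjointness of the surviving copies together with the trivial edge count gives $\abs{\cC_0 \cap G'} \le \binom{n'}{2}/e(F) \le n'^2/(2 e(F))$. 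Combining the lower bound $\abs{\cC_0} - \delta n^2$ on the survivors with this upper bound and substituting $\abs{\cC_0} \ge \e n^2/(e(F) \abs{F}^{\abs{F}})$ and $\delta \sim \e/\abs{F}^{\abs{F}}$ then yields $n' \ge (\e/\abs{F}^{\abs{F}})^{1/2} \cdot n$.

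The hard part is calibrating $\delta$ so that both the uniformity parameter and the size bound come out right simultaneously: $\delta$ must be comparable to $\e/\abs{F}^{\abs{F}}$ in order to match the stated uniform-far conclusion, yet the pruning loss $\delta n^2$ must remain a small fraction of $\abs{\cC_0}$ so that the edge-disjointness argument delivers the $\sqrt{\e/\abs{F}^{\abs{F}}}$ factor. This forces one to absorb the stray $e(F) \le \binom{\abs{F}}{2}$ factor from the packing step into the $\abs{F}^{\abs{F}}$ denominators via $\abs{F}^{\abs{F}} \ge 2 e(F)$ for $\abs{F} \ge 2$, which is exactly the slack the constant $\abs{F}^{\abs{F}}$ affords. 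A secondary point worth checking is that preserving edge-disjointness under vertex deletion is automatic, so the edge-count upper bound on $\abs{\cC_0 \cap G'}$ is valid at every stage of the process.
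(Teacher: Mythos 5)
Your three-step structure---maximal edge-disjoint packing, random labelling to align copies, iterative pruning---is the same as the paper's, and the vertex-pruning variant with shrinking threshold $\delta\abs{G'}$ is a valid alternative to the paper's copy-pruning at a fixed threshold $\delta n$. The gap is in the constant calibration of your final paragraph, and the inequality you cite to repair it points in the wrong direction. You correctly observe that the maximality argument gives $\abs{\cC}\geq \e n^2/e(F)$ (the paper asserts $\e n^2$, silently dropping the $e(F)$), hence $\abs{\cC_0}\geq \e n^2/(e(F)\abs{F}^{\abs{F}})$. But with $\delta=\e/(2\abs{F}^{\abs{F}})$, the pruning loss is bounded only by $\delta n^2=\e n^2/(2\abs{F}^{\abs{F}})$, and
\begin{equation*}
    \abs{\cC_0}-\delta n^2 \;\geq\; \frac{\e n^2}{\abs{F}^{\abs{F}}}\left(\frac{1}{e(F)}-\frac{1}{2}\right),
\end{equation*}
which is non-positive as soon as $e(F)\geq 2$: the surviving copy count collapses and the edge-count step delivers no size bound. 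Replacing $\delta n^2$ by the sharper $\delta\sum_i\abs{G'_i}\approx\delta n^2/2$ only stretches this to $e(F)\leq 3$. The observation $\abs{F}^{\abs{F}}\geq 2e(F)$ is true but cannot help: the troublesome $e(F)$ sits multiplicatively \emph{beside} $\abs{F}^{\abs{F}}$ in the denominator of the lower bound on $\abs{\cC_0}$, and enlarging $\abs{F}^{\abs{F}}$ does not cancel it.

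The repair is to choose $\delta$ on the order of $\e/(2e(F)\abs{F}^{\abs{F}})$: then $\abs{\cC_0}-\delta n^2\geq \e n^2/(2e(F)\abs{F}^{\abs{F}})$, the edge-count argument gives $n'\geq(\e/\abs{F}^{\abs{F}})^{1/2}n$ as desired, and the output is uniformly $\e/(2e(F)\abs{F}^{\abs{F}})$-far from $F$-free rather than uniformly $\e/(2\abs{F}^{\abs{F}})$-far. This is weaker than the stated conclusion by a factor $e(F)$ in the uniformity parameter, but that is all the paper actually uses downstream (every later application only needs $\poly(\e)$-farness with constants depending on $F$). It is worth noting that this same hidden $e(F)$ factor is present in the paper's own proof: the paper's pruning step also loses up to $\e n^2/(2\abs{F}^{\abs{F}})$ copies, which only stays below $\abs{\cC_1}$ if one accepts the (technically off-by-$e(F)$) claim that $\abs{\cC}\geq\e n^2$.
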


\begin{proof}
    Since $G$ is $\e$-far from $F$-free, it contains a collection $\cC$ of $\e n^2$ edge-disjoint copies of $F$. First take a random $F$-partition of $G$ and say a $C \in \cC$ \defn{survives} if the vertices of $C$ are in the corresponding parts of the $F$-partition. Each $C \in \cC$ survives with probability $1/\abs{F}^{\abs{F}}$ so, for some $F$-partition, at least $1/\abs{F}^{\abs{F}}$ of copies of $F$ in $\cC$ survive. Let $G_1$ be the corresponding $F$-partite subgraph and $\cC_1$ be the set of surviving copies of $F$. Note that $\abs{\cC_1} \geq \abs{\cC}/\abs{F}^{\abs{F}} \geq \e/\abs{F}^{\abs{F}} \cdot n^2$.

    Next we sparsify. If $v \in G_1$ is in fewer than $\e/(2 \cdot \abs{F}^{\abs{F}}) \cdot n$ copies of $F$ from $\cC_1$, then delete the edges of all these copies of $F$. Do this until every vertex is either in no copy of $F$ from $\cC_1$ or in at least $\e/(2 \cdot \abs{F}^{\abs{F}}) \cdot n$ of them. The number of deletions is less than $\e/(2 \cdot \abs{F}^{\abs{F}}) \cdot n^2$ and so at least $\e/(2 \cdot \abs{F}^{\abs{F}}) \cdot n^2$ copies of $F$ from $\cC_1$ remain. Call the set of remaining copies of $F$ $\cC'$. Let $G'$ be the graph with vertex-set $\bigcup_{C \in \cC'} V(C)$ and edge-set $\bigcup_{C \in \cC'} E(C)$.

    Note that $G'$ is an $F$-partite subgraph of $G$, $\cC' \subset \cC$ is a collection of edge-disjoint copies of $F$ in $G'$, and every vertex of $G'$ is in at least $\e/(2 \cdot \abs{F}^{\abs{F}}) \cdot n$ copies of $F$ from $\cC'$. Hence, $G'$ is uniformly $\e/(2 \cdot \abs{F}^{\abs{F}})$-far from $F$-free. Finally, since $\abs{\cC'} \geq \e/(2 \cdot \abs{F}^{\abs{F}}) \cdot n^2$, $G'$ has at least that many edges and so has order at least $(\e/\abs{F}^{\abs{F}})^{1/2} \cdot n$.
\end{proof}

\Cref{lem:euniffacts,lem:eunifsubgraph} give the canonical $F$-partite structure of graphs that are $\e$-far from $F$-free. With this in mind, it is natural to talk about \defn{$F$-coloured graphs} being $F$-abundant. An $F$-coloured graph is a pair $(H, \sigma)$ where $H$ is a graph and $\sigma$ is an $F$-colouring of $H$.

\begin{definition}
    Let $(H, \sigma)$ be an $F$-coloured graph. $(H, \sigma)$ is \defn{$F$-abundant} if for all $\eps > 0$ and for all sufficiently large $n$, every $n$-vertex graph that is uniformly $\eps$-far from $F$-free graph with corresponding $F$-partition $(V_i : i \in V(F))$ contains $\poly(\eps) \cdot n^{\abs{H}}$ copies of $(H, \sigma)$ (that is, each $v \in H$ is embedded in part $V_{\sigma(v)}$). The polynomial may only depend on $F$, $H$, and $\sigma$.
\end{definition}

To show that a graph $H$ is $F$-abundant, it suffices to show that there is some $F$-colouring $\sigma$ of $H$ such that $(H, \sigma)$ is $F$-abundant. In fact, all known proofs of abundance implicitly do this.
Whether or not this is necessary is an important question, related to the compactness of abundance, see \cref{sec:questions}.

\subsection{Building up abundance} Now we develop some operations that allow us to build $F$-abundant $F$-coloured graphs from smaller ones. The first lets us add vertices with monochromatic neighbourhoods.

\begin{theorem}\label{lem:peel_abundant}
    Let $(H, \sigma)$ be an $F$-coloured graph that is $F$-abundant. Let $v_1, \dots, v_s \in V(H)$ be vertices satisfying $\sigma(v_1) = \sigma(v_2) = \dotsb = \sigma(v_s)$. Let $(H', \sigma')$ be an $F$-coloured graph obtained from $(H, \sigma)$ by joining a new vertex $u$ to $v_1, \dots, v_s$ and taking $\sigma'(u)$ to be any neighbour of $\sigma(v_1)$ in $F$. Then $(H', \sigma')$ is $F$-abundant.
\end{theorem}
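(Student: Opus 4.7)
Write $j := \sigma(v_1) = \dots = \sigma(v_s)$ and $k := \sigma'(u)$, so that $jk \in E(F)$. Let $G$ be an $n$-vertex graph which is uniformly $\eps$-far from $F$-free, with witnessing $F$-partition $(V_i)_{i \in V(F)}$ and family $\cC$ of edge-disjoint $F$-copies. The key idea is to exploit the fact that every $y \in V_k$ has a large neighbourhood $N_j(y) \subseteq V_j$, and to apply the $F$-abundance of $(H,\sigma)$ to the restriction $G_y := G\bigl[(V(G)\setminus V_j)\cup N_j(y)\bigr]$. Copies of $(H,\sigma)$ produced inside $G_y$ automatically send every $j$-coloured vertex of $H$---and in particular $v_1,\dots,v_s$---into $N_j(y)$, so adjoining $y$ as the image of the new vertex yields a copy of $(H',\sigma')$ in $G$.

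To make this work we first need to know that $G_y$ retains enough structure for many $y \in V_k$. Let $\cC_y \subseteq \cC$ consist of those $F$-copies whose unique $V_j$-vertex lies in $N_j(y)$; these are exactly the copies of $\cC$ surviving inside $G_y$. By uniform $\eps$-far-ness we have $|\cC|\ge \eps n^2/|F|$, and by \Cref{lem:euniffacts} every $w \in V_j$ satisfies $|N_k(w)|\ge \eps n$. Averaging over a uniformly random $y\in V_k$ then gives
\[
\mathbb{E}_y\bigl[|\cC_y|\bigr] \;=\; \sum_{C \in \cC}\frac{|N_k(w_C)|}{|V_k|} \;\ge\; \poly(\eps)\cdot n^2,
\]
where $w_C$ denotes the $V_j$-vertex of $C$. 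Since $|\cC_y| \le n^2$, a reverse-Markov argument produces a set $Y \subseteq V_k$ of at least $\poly(\eps)\cdot n$ ``good'' vertices $y$ for which $|\cC_y|\ge \poly(\eps)\cdot n^2$.

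For each good $y$, the graph $G_y$ inherits an $F$-partition from $G$ (with $V_j$-part equal to $N_j(y)$), and $\cC_y$ is a family of aligned, edge-disjoint $F$-copies in $G_y$. Running only the sparsification step in the proof of \Cref{lem:eunifsubgraph}---the preceding random $F$-partitioning step is unnecessary because $G_y$ is already $F$-partite with aligned copies---produces a subgraph $G'_y\subseteq G_y$ that is uniformly $\poly(\eps)$-far from $F$-free with respect to the inherited $F$-partition and satisfies $|G'_y|\ge \poly(\eps)\cdot n$. Applying the $F$-abundance of $(H,\sigma)$ to $G'_y$ yields $\poly(\eps)\cdot n^{|H|}$ copies of $(H,\sigma)$ in $G'_y$. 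In each such copy $v_1,\dots,v_s$ are embedded into $N_j(y)$, so adjoining $y$ gives a copy of $(H',\sigma')$ in $G$, provided $y$ is not already the image of some vertex of $H$; the number of offending copies is at most $|H|\cdot n^{|H|-1}$, which is negligible for $n$ large. Distinct $y$'s yield distinct $(H',\sigma')$-copies, since the new vertex of $H'$ has a unique image. Summing over $y \in Y$ delivers $\poly(\eps)\cdot n^{|H'|}$ copies of $(H',\sigma')$, as required.

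\noindent\textbf{Main obstacle.} The delicate point is the averaging step: restricting $V_j$ to $N_j(y)$ can destroy many $F$-copies of $\cC$, and without the uniform $\eps$-far-ness hypothesis there is no reason the surviving family $\cC_y$ should be of size $\poly(\eps)\cdot n^2$ for many $y$. A secondary subtlety is that the sparsification used to upgrade $G_y$ to a uniformly $\poly(\eps)$-far graph must preserve the inherited $F$-partition; only then do the resulting copies of $(H,\sigma)$ respect the colouring and place $v_1,\dots,v_s$ inside $N_j(y)$, allowing the extension by $y$ to go through.
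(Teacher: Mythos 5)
Your proof is correct and follows essentially the same strategy as the paper: restrict $V_{\sigma(v_1)}$ to the neighbourhood in it of a vertex $y$ of colour $\sigma'(u)$, extract a uniformly-far aligned $F$-partite subgraph, apply the $F$-abundance of $(H,\sigma)$ there, and extend each resulting copy by $y$, summing over the many choices of $y$. The only differences are cosmetic: the paper notes that the direct count $\abs{\cC_y}\ge \abs{N_j(y)}\cdot \eps n - n \ge \Omega(\eps^2)n^2$ already holds for \emph{every} $y\in V_k$ (since $\abs{N_j(y)}\ge \eps n$ and each $w\in N_j(y)$ lies in $\ge\eps n$ copies with distinct $V_j$-vertices), so your averaging and reverse-Markov step is not needed, and the paper removes $y$ from the auxiliary graph up front instead of discarding degenerate copies at the end.
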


\begin{proof}
    Without loss of generality, we may assume that $V(F) = \set{1, 2, \dotsc, \abs{F}}$, $\set{1,2}$ is an edge of $F$, $\sigma(v_1) = \sigma(v_2) = \dotsb = \sigma(v_s) = 1$, and $\sigma'(u) = 2$. Let $G$ be an $n$-vertex graph that is uniformly $\e$-far from $F$-free. Let $(V_i : i \in V(F))$ be the corresponding $F$-partition of $G$, and let $\cC$ be the corresponding collection of edge-disjoint copies of $F$.
    
    Fix any vertex $x \in V_2$ (this will play the role of $u$) and let $U_1 = N(x) \cap V_1$ (we will find the vertices corresponding to $v_1, \dots, v_s$ in $U_1$). Note that, by \cref{lem:euniffacts}, $\abs{U_1} \geq \e n$. Consider $G_1 = G[U_1, V_2, V_3, \dotsc, V_{\abs{F}}] - x$ and let $\cC_1 = \set{C \in \cC : V(C) \cap U_1 \neq \emptyset \text{ and } x \notin V(C)}$. Since every vertex of $U_1$ is in at least $\e n$ copies of $F$ from $\cC$ and $x$ is in at most $n$ copies of $F$ from $\cC$ (the copies of $F$ from $\cC$ are edge-disjoint), $\cC_1$ is a collection of at least $\Omega(\e^2) \cdot n^2$ edge-disjoint copies of $F$ in $G_1$.

    By \cref{lem:eunifsubgraph}, there is an uniformly $\Omega(\e^2)$-far from $F$-free subgraph $G'$ of $G_1$ of order $\Omega(\e) \cdot \abs{G_1} = \Omega(\e^2) \cdot n$. Let $A_1 \subset U_1$, $A_2 \subset V_2$, \ldots, $A_{\abs{F}} \subset V_{\abs{F}}$ be the corresponding $F$-partition of $G'$. Since $(H, \sigma)$ is $F$-abundant, $G'$ contains $\poly_1(\e) \cdot n^{\abs{H}}$ copies of $(H, \sigma)$. In particular, each $v_i$ is embedded into $A_1 \subset U_1 \subset N(x)$. Thus, adding $x$ to any such copy of $(H, \sigma)$ gives a copy of $(H', \sigma')$. Since there were $\abs{V_2} \geq \e n$ choices for $x$ in $V_2$, there are $\poly_2(\e) \cdot n^{\abs{H'}}$ copies of $(H', \sigma')$ in $G$.
\end{proof}

\begin{example}
    One can see that $C_5$ is $K_3$-abundant by noting that the $3$-colouring of $C_5$ in the middle of \cref{fig:c5} has a vertex with a monochromatic neighbourhood (in fact, any 3-colouring has this property). 
    Once this vertex has been deleted the resulting graph (depicted on the left of the figure) is a path, which is abundant (ignore the right hand picture for now).

    \begin{figure}[ht!]
        \centering
        \begin{tikzpicture}
            \def\pstyle{{"circ","square","triangle","circ","triangle"}}
            \draw[->] (2.5,0) -- (3.5,0);
            \draw[->] (9.5,0) -- (8.5,0);
            \begin{scope}[shift={(0,0)}]
                \foreach \i in {0,1,2,4} {
                    \pgfmathparse{\pstyle[\i]};
                    \node[\pgfmathresult] (P\i) at (72*\i+18:1) {};
                }
                \draw (P4) -- (P0) -- (P1) -- (P2);
            \end{scope}
            \begin{scope}[shift={(6,0)}]
                \foreach \i in {0,...,4} {
                    \pgfmathparse{\pstyle[\i]};
                    \node[\pgfmathresult] (P\i) at (72*\i+18:1) {};
                }
                \draw (P0) -- (P1) -- (P2) -- (P3) -- (P4) -- (P0);
            \end{scope}
            \begin{scope}[shift={(12,-0.65)}]
                \foreach \i in {0,2,3,4} {
                    \pgfmathparse{\pstyle[\i]};
                    \node[\pgfmathresult] (P\i) at (72*\i+18:1) {};
                }
                \foreach \i in {0,1,2} {
                    \pgfmathparse{\pstyle[\i]};
                    \node[\pgfmathresult] (Q\i) at ($(72*\i+18:1)+(0,1.3)$) {};
                }
                \draw (Q0) -- (Q1) -- (Q2);
                \draw (P0) -- (P2) -- (P3) -- (P4) -- (P0);
                \draw[<->] ($(P0)+(0,0.4)$) -- ($(Q0)-(0,0.4)$);
                \draw[<->] ($(P2)+(0,0.4)$) -- ($(Q2)-(0,0.4)$);
            \end{scope}
        \end{tikzpicture}
        \caption{Two ways to show the $K_3$-abundance of $C_5$.} \label{fig:c5}
    \end{figure}
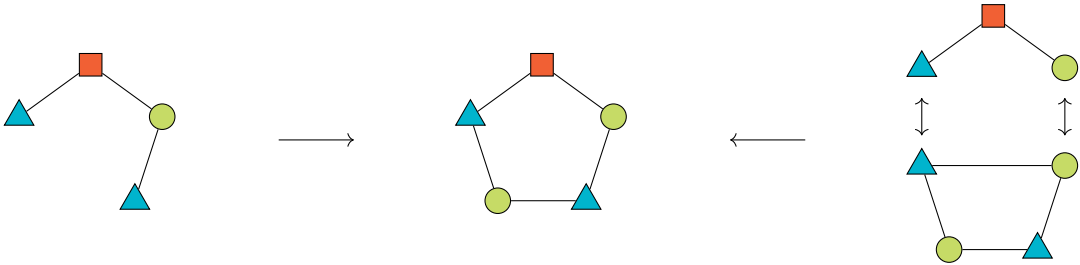
\end{example}

The second operation allows us to glue along an edge.

\begin{theorem}\label{lem:glue_blowup_abundant}
    Let $(H, \sigma)$ and $(H', \sigma')$ be $F$-coloured graphs that are $F$-abundant. Let $uv \in E(H)$,
    $U',V' \subset V(H')$  satisfy $\sigma'(U') = \sigma(u)$, and $\sigma'(V') = \sigma(v)$.
    Let the $F$-coloured graph $(H'',\sigma'')$ be obtained by 
    \begin{itemize}
        \item deleting the edge $uv$ from $H$,
        \item blowing up $u$ by $\abs{U'}$ and $v$ by $\abs{V'}$,
        \item identifying each vertex of $U'$ with a unique copy of $u$ and each vertex of $V'$ with a unique copy of $v$.  
    \end{itemize}
    Then $(H'', \sigma'')$ is $F$-abundant.
\end{theorem}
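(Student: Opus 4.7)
The strategy is to leverage $F$-abundance of both $(H, \sigma)$ and $(H', \sigma')$ to produce many $(H'', \sigma'')$-copies in an $n$-vertex host graph $G$ that is uniformly $\e$-far from $F$-free with $F$-partition $(V_i)_{i \in V(F)}$. Write $p = \sigma(u)$, $q = \sigma(v)$, and $K = H - \{u, v\}$. Observe that $(H'', \sigma'')$ is the amalgamation of $(H', \sigma')$ with the blow-up $(\tilde H, \tilde \sigma)$ of $(H \setminus uv, \sigma)$, where $u$ is duplicated $\abs{U'}$ times and $v$ is duplicated $\abs{V'}$ times, glued along the shared set $U' \cup V'$. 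Since $\tilde H$ is a subgraph of a blow-up of $H$, \cref{lem:abundant_facts} ensures $(\tilde H, \tilde \sigma)$ is $F$-abundant, and similarly every subgraph of $H$ or $H'$ inherits $F$-abundance.

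A copy of $(H'', \sigma'')$ in $G$ is determined by an anchor $(X, Y) \in V_p^{\abs{U'}} \times V_q^{\abs{V'}}$ for the image of $U' \cup V'$, an embedding of $(K, \sigma|_K)$ in $G$ whose image at each $u$-neighbour (resp.\ $v$-neighbour) of $K$ lies in $\bigcap_i N_G(X_i)$ (resp.\ $\bigcap_j N_G(Y_j)$), and an embedding of the remaining vertices of $H'$ with the correct $H'$-adjacencies to $(X, Y)$. Thus
\begin{equation*}
    \#\set{(H'', \sigma'')\text{-copies in } G} \;=\; \sum_{X, Y} g_{H'}(X, Y) \cdot h_K(X, Y),
\end{equation*}
where $g_{H'}(X, Y)$ counts the $(H', \sigma')$-embeddings with $U' \mapsto X, V' \mapsto Y$ and $h_K(X, Y)$ counts the compatible $K$-embeddings. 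I would lower bound this sum by adapting the greedy peeling argument in the proof of \cref{lem:peel_abundant}, iterated $\abs{U'} + \abs{V'}$ times. Starting from $G$, pick anchor vertices $x_1, \dots, x_{\abs{U'}} \in V_p$ and $y_1, \dots, y_{\abs{V'}} \in V_q$ one at a time; at each step, restrict every part of colour neighbouring $p$ (or $q$) in $F$ to its intersection with $N_G(x_i)$ (or $N_G(y_j)$), then apply \cref{lem:eunifsubgraph} to extract a uniformly $\poly(\e)$-far from $F$-free sub-structure. After all peelings, the resulting graph $G^\star$ has parts of size $\poly(\e) \cdot n$, is uniformly $\poly(\e)$-far from $F$-free, and crucially any embedding of $K$ or $H' \setminus (U' \cup V')$ into $G^\star$ automatically satisfies the anchor-adjacency constraints, because the relevant parts have been restricted to the required common $G$-neighbourhoods. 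Invoking $(H', \sigma')$- and $(H, \sigma)$-abundance on $G^\star$ (the latter via the $F$-abundant subgraph $(K, \sigma|_K)$) then yields $\poly(\e) \cdot n^{\abs{H'} - \abs{U'} - \abs{V'}}$ $H'$-extensions and $\poly(\e) \cdot n^{\abs{K}}$ compatible $K$-embeddings per good anchor.

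The principal obstacle is verifying that the iterated peeling preserves enough structure at each stage: each restriction could shrink the relevant parts too much or destroy uniform $F$-far-ness. I would handle this via a reverse-Markov averaging at each step, showing that for a $\poly(\e)$ fraction of anchor choices from the current rich part, the subsequent restriction leaves a uniformly $\poly(\e)$-far-from-$F$-free subgraph (using \cref{lem:euniffacts} to ensure each vertex of $V_p$ has at least $\e n$ neighbours in each colour-neighbouring part, so that a typical $x_i$ yields a large common neighbourhood). Combined with the $\poly(\e) \cdot n^{\abs{U'} + \abs{V'}}$ valid anchor tuples guaranteed by $F$-abundance of the bipartite subgraph $H'[U' \cup V']$, this yields a final count of $\poly(\e) \cdot n^{\abs{H''}}$ copies of $(H'', \sigma'')$. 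Since $\abs{U'}$ and $\abs{V'}$ depend only on $H'$, the polynomial loss in $\e$ accumulates over only a constant number of peeling steps, so the final exponent remains polynomial in $\e$, establishing $F$-abundance of $(H'', \sigma'')$.
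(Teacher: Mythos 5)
Your proposal has a genuine gap. The most telling symptom is that nowhere in your argument do you actually use the hypothesis $uv \in E(H)$, but this hypothesis is essential: take $F = K_3$, let $H$ be the path $u\text{--}w\text{--}v$ with $\sigma(u)=1$, $\sigma(w)=3$, $\sigma(v)=2$ (so $uv \notin E(H)$), let $H'$ be the single edge $u'v'$ with $\sigma'(u')=1$, $\sigma'(v')=2$, and let $U'=\{u'\}$, $V'=\{v'\}$. Both $(H,\sigma)$ and $(H',\sigma')$ are bipartite, hence $K_3$-abundant. But identifying $u$ with $u'$ and $v$ with $v'$ produces $H''=K_3$, which is not $K_3$-abundant by part~2 of \cref{lem:abundant_facts}. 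Any proof of the theorem must exploit $uv \in E(H)$.

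The concrete place your argument breaks is the iterated multi-part peeling. \Cref{lem:peel_abundant} restricts a \emph{single} part $V_1 \mapsto N(x)\cap V_1$, and the resulting graph inherits $\poly(\e) n^2$ edge-disjoint copies of $F$ because $|N(x)\cap V_1| \geq \e n$ and each such vertex lies in $\geq \e n$ copies from $\cC$. Your anchors, however, have $H$- or $H'$-neighbours of several colours, so after each pick you must restrict \emph{several} parts to $N(x_i)$ simultaneously, and a copy $C\in\cC$ survives only if it has a vertex in $N(x_i)\cap V_j$ for \emph{every} relevant $j$. For $F = K_3$ and two restricted parts, the expected number of surviving $C \in \cC$ over a random $x_1 \in V_1$ is (up to constants) $\frac{1}{n}\sum_{C \in \cC}(\text{number of triangles through the }V_2V_3\text{-edge of }C)$, which is controlled by the \emph{total} triangle count of $G$. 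An $\e$-far graph can have only $\delta(\e)\,n^3$ triangles with $\delta$ sub-polynomial in $\e$, so this expectation can be $\delta(\e)\,n^2 \ll \poly(\e)\,n^2$, and no reverse-Markov averaging can then produce a $\poly(\e)$-fraction of anchors for which the restriction retains $\poly(\e)\,n^2$ copies of $F$. You correctly identify this as the ``principal obstacle,'' but the averaging you propose cannot overcome it.

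The paper's proof sidesteps the obstacle by inverting the order of quantifiers: rather than fixing the anchors first, it fixes a copy $\cH$ of $H_1 := H - u - v$ that is ``heavy,'' meaning $\poly(\e)\,n^2$ pairs $(x,y)\in V_1\times V_2$ extend $\cH$ to a copy of $(H,\sigma)$. Because $uv \in E(H)$, every such extending pair is a $G$-edge, so if $X$ and $Y$ are the projections of these pairs, there are $\poly(\e)\,n^2$ edges between $X$ and $Y$. The key tensoring observation is that \emph{every} pair in $X\times Y$ (not just the original extending pairs) extends $\cH$ to a copy of $(H-uv,\sigma)$, since once $uv$ is deleted the constraints on $x$ and $y$ relative to $\cH$ are independent. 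Since each $X$--$Y$ edge lies in a copy of $F$ from $\cC$, the collection $\cC_1 = \{C \in \cC : C\cap X \neq \emptyset,\ C\cap Y\neq\emptyset\}$ has $\poly(\e)\,n^2$ edge-disjoint copies, and \cref{lem:eunifsubgraph} produces the uniformly far subgraph with its $u$- and $v$-parts inside $X$ and $Y$ respectively, where $(H',\sigma')$-abundance is then applied. The density of $X$--$Y$ edges, which comes precisely from $uv \in E(H)$, is the mechanism that replaces your iterated peeling and is what makes the theorem true.
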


The following example is particularly simple and worth bearing in mind for the proof. 
Let $H$ be the $C_4$ and $H'$ be the path.

\begin{example}
    The $K_3$-coloured $C_4$ on the right of \cref{fig:c5} is $K_3$-abundant (for example, by \cref{lem:peel_abundant}). 
    Moreover, the path depicted on the right is also $K_3$-abundant. Thus, by applying \cref{lem:glue_blowup_abundant} we see that the $K_3$-coloured $C_5$ in the middle of \cref{fig:c5} is $K_3$-abundant.
\end{example}

\begin{proof}[Proof of \cref{lem:glue_blowup_abundant}]
    Without loss of generality, we may assume that $V(F) = \set{1, 2, \dotsc, \abs{F}}$, $\set{1, 2}$ is an edge of $F$, $\sigma(u) = \sigma'(U') = 1$, and $\sigma(v) = \sigma(V') = 2$. Let $G$ be an $n$-vertex graph that is uniformly $\e$-far from $F$-free. Let $(V_i : i \in V(F))$ be the corresponding $F$-partition of $G$, and let $\cC$ be the corresponding collection of edge-disjoint copies of $F$. By deleting edges, we may assume that every edge of $G$ is in a copy of $F$ from $\cC$.

    Since $(H, \sigma)$ is $F$-abundant, there are $\poly_1(\e) \cdot n^{\abs{H}}$ copies of $(H, \sigma)$ in $G$. Let $H_1 \coloneqq H - u - v$. The \defn{weight} of a copy of $(H_1, \sigma)$ in $G$ is the number of $(H, \sigma)$ that contain it. This is just the number of $(x, y) \in V_1 \times V_2$ which when added to the copy of $(H_1, \sigma)$ give a copy of $(H, \sigma)$. The sum of the weights of the copies of $(H_1, \sigma)$ is at least $\poly_1(\e) \cdot n^{\abs{H}}$, the number of copies of $(H_1, \sigma)$ is at most $n^{\abs{H_1}} = n^{\abs{H} - 2}$, and the maximum weight of any single copy of $(H_1, \sigma)$ is at most $n^2$. Hence, there are at least $\poly_2(\e) \cdot n^{\abs{H} - 2}$ copies of $(H_1, \sigma)$ that have weight at least $\poly_3(\e) \cdot n^2$.

    Fix a copy $(H_1, \sigma)$ of weight at least $\poly_3(\e) \cdot n^2$ and call it $\cH$. Consider the pairs $(x, y) \in V_1 \times V_2$ that extend $\cH$ to a copy of $(H, \sigma)$. Since $uv \in E(H)$, each such pair is an edge in $G$. So if $X \subset V_1$ and $Y \subset V_2$ are the end-vertices of these edges, there are $\poly_3(\e) \cdot n^2$ edges from $X$ to $Y$. Crucially, every pair $(x, y)$ with $x \in X$ and $y \in Y$ extends $\cH$ to a copy of $(H - uv, \sigma)$. 
    
    Let $\cC_1 = \set{C \in \cC : C \cap X \neq \emptyset \text{ and } C \cap Y \neq \emptyset}$. Since there are $\poly_3(\e) \cdot n^2$ edges from $X$ to $Y$ and every edge of $G$ is in a copy of $F$ from $\cC$, there are $\poly_3(\e) \cdot n^2$ edge-disjoint copies of $F$ in $\cC_1$. Let $G_1$ be the subgraph of $G$ induced by $\cC_1$. By \cref{lem:eunifsubgraph}, $G_1$ has a subgraph $G'$ of order at least $\poly_4(\e) \cdot n$ that is uniformly $\poly_5(\e)$-far from $F$-free. Furthermore, the corresponding $F$-partition $(V'_i : i \in V(F))$ of $G'$ satisfies $V'_1 \subset X$, $V'_2 \subset Y$, $V'_3 \subset V_3$, \ldots, $V'_{\abs{F}} \subset V_{\abs{F}}$. Since $(H', \sigma')$ is $F$-abundant, $G'$ contains $\poly_6(\e) \cdot n^{\abs{H'}}$ copies of $(H', \sigma')$ and at most $\abs{H} \cdot n^{\abs{H'} - 1}$ of these intersect $\cH$. Each of the remaining copies extend $\cH$ to a copy of $(H'', \sigma'')$, since $U'$ embeds into $V'_1 \subset X$ and $V'$ embeds into $V'_2 \subset Y$.

    Thus $\cH$ extends to at least $\poly_7(\e) \cdot n^{\abs{H'}}$ copies of $(H'', \sigma'')$. There were $\poly_2(\e) \cdot n^{\abs{H} - 2}$ choices for $\cH$ and so $G$ contains at least $\poly_8(\e) \cdot n^{\abs{H'} + \abs{H} - 2} = \poly_8(\e) \cdot n^{\abs{H''}}$ copies of $(H'', \sigma'')$. Thus $(H'', \sigma'')$ is $F$-abundant.
\end{proof}

\begin{example}
    The $K_3$-coloured Petersen graph depicted on the right in \cref{fig:petersen} can be seen to be $K_3$-abundant by a combination of \cref{lem:peel_abundant} and \cref{lem:glue_blowup_abundant} as indicated in the figure. The left graph (without the square vertices) is 2-coloured and therefore $K_3$-abundant. Then, we add the square vertices to that graph according to \cref{lem:glue_blowup_abundant} by replacing one edge each. Thus, the middle graph (without the circular top vertex) is abundant. Finally, we add the circular top vertex to that graph. The resulting graph, which is a coloured Petersen graph, is then abundant by \cref{lem:peel_abundant}. 
    Notably, neither \cref{lem:peel_abundant} nor \cref{lem:glue_blowup_abundant} suffice alone.

    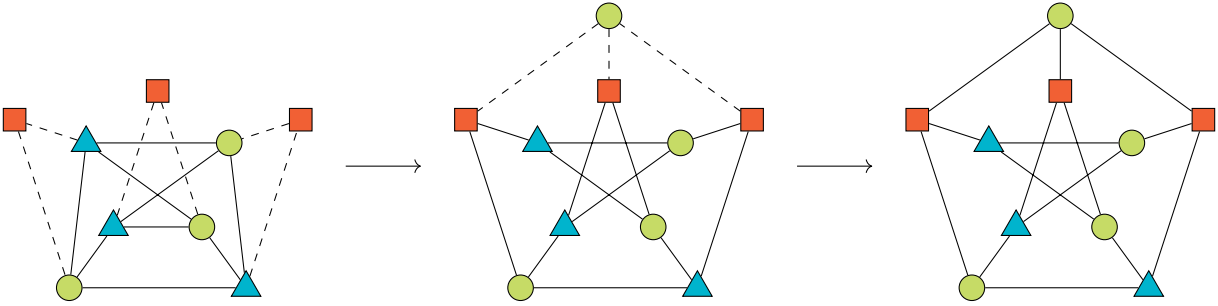
\begin{figure}[ht!]
        \centering
        \begin{tikzpicture}
            \def\pstyle{{"square","circ","square","circ","triangle"}}
            \def\qstyle{{"circ","square","triangle","triangle","circ"}}
            \draw[->] (2.5,0) -- (3.5,0);
            \draw[->] (8.5,0) -- (9.5,0);
            \begin{scope}
                \foreach \i in {0,2,3,4} {
                    \pgfmathparse{\pstyle[\i]};
                    \node[\pgfmathresult] (P\i) at (72*\i+18:2) {};
                }
                \foreach \i in {0,...,4} {
                    \pgfmathparse{\qstyle[\i]};
                    \node[\pgfmathresult] (Q\i) at (72*\i+18:1) {};
                }
                \draw[dashed] (Q2) -- (P2) -- (P3) (P4) -- (P0) -- (Q0) (Q4) -- (Q1) -- (Q3);
                \draw (Q2) -- (P3) edge (Q3) -- (P4) edge (Q4) -- (Q0);
                \draw (Q0) -- (Q2) -- (Q4) -- (Q3) -- (Q0);
            \end{scope}
            \begin{scope}[shift={(6,0)}]
                \foreach \i in {0,...,4} {
                    \pgfmathparse{\pstyle[\i]};
                    \node[\pgfmathresult] (P\i) at (72*\i+18:2) {};
                    \pgfmathparse{\qstyle[\i]};
                    \node[\pgfmathresult] (Q\i) at (72*\i+18:1) {};
                }
                \draw[dashed] (P0) -- (P1) edge (Q1) -- (P2);
                \draw (P0) edge (Q0) (P2) edge (Q2) -- (P3) edge (Q3) -- (P4) edge (Q4) -- (P0);
                \draw (Q0) -- (Q2) -- (Q4) -- (Q1) -- (Q3) -- (Q0);
            \end{scope}
            \begin{scope}[shift={(12,0)}]
                \foreach \i in {0,...,4} {
                    \pgfmathparse{\pstyle[\i]};
                    \node[\pgfmathresult] (P\i) at (72*\i+18:2) {};
                    \pgfmathparse{\qstyle[\i]};
                    \node[\pgfmathresult] (Q\i) at (72*\i+18:1) {};
                }
                \draw (P0) edge (Q0) -- (P1) edge (Q1) -- (P2) edge (Q2) -- (P3) edge (Q3) -- (P4) edge (Q4) -- (P0);
                \draw (Q0) -- (Q2) -- (Q4) -- (Q1) -- (Q3) -- (Q0);
            \end{scope}
        \end{tikzpicture}
        \caption{$K_3$-abundance of the Petersen graph.} \label{fig:petersen}
    \end{figure}
\end{example}

Our third operation is a particularly useful combination of \cref{lem:peel_abundant} and \cref{lem:glue_blowup_abundant}.

\begin{theorem}\label{lem:join_abundant}
    Let $(H, \sigma)$ and $(H', \sigma')$ be $F$-coloured graphs that are $F$-abundant. Let $a, b, a'$ be vertices of $F$ with $ab, aa' \in E(F)$. The coloured graph obtained from the disjoint union of $(H, \sigma)$ and $(H', \sigma')$ by adding all edges between vertices of colour $a$ in $H$ and colour $a'$ in $H'$ and all edges between vertices of colour $b$ in $H$ and colour $a$ in $H'$ is $F$-abundant.
\end{theorem}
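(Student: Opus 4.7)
The plan is to derive \cref{lem:join_abundant} from two applications of \cref{lem:peel_abundant} followed by a single application of \cref{lem:glue_blowup_abundant}. The idea is to first enlarge $(H, \sigma)$ by two auxiliary vertices joined by an edge, and then use that edge as the site along which $(H', \sigma')$ is pasted in; when the two new vertices are blown up into copies of the relevant interface sets of $H'$, the desired cross-edges between $H$ and $H'$ will appear automatically.

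Assume first that both $\sigma^{-1}(a)$ and $\sigma^{-1}(b)$ are non-empty; the degenerate cases are handled at the end. Using $ab \in E(F)$, first apply \cref{lem:peel_abundant} to $(H, \sigma)$ to adjoin a new vertex $u^*$ of colour $a$ with neighbourhood $\sigma^{-1}(b)$, giving an $F$-abundant coloured graph $(H^*, \sigma^*)$. Using $aa' \in E(F)$, apply \cref{lem:peel_abundant} again to $(H^*, \sigma^*)$ to adjoin a further vertex $v^*$ of colour $a'$ adjacent to the monochromatic set $\sigma^{-1}(a) \cup \{u^*\}$, producing an $F$-abundant coloured graph $(H^{**}, \sigma^{**})$ in which $u^* v^*$ is an edge. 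Finally, apply \cref{lem:glue_blowup_abundant} to $(H^{**}, \sigma^{**})$ and $(H', \sigma')$ using the edge $u^* v^*$, with $U' = (\sigma')^{-1}(a)$ and $V' = (\sigma')^{-1}(a')$; the colour conditions $\sigma'(U') = a = \sigma^{**}(u^*)$ and $\sigma'(V') = a' = \sigma^{**}(v^*)$ are immediate.

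Unpacking the construction of \cref{lem:glue_blowup_abundant}: the edge $u^* v^*$ is first deleted, so the blow-up copies of $u^*$, identified with $U' = (\sigma')^{-1}(a)$, inherit neighbourhood $\sigma^{-1}(b)$, and the blow-up copies of $v^*$, identified with $V' = (\sigma')^{-1}(a')$, inherit neighbourhood $\sigma^{-1}(a)$. The resulting coloured graph has vertex set $V(H) \cup V(H')$, retains every edge of $H$ and of $H'$, and its additional edges are exactly the complete bipartite graph between $\sigma^{-1}(b)$ and $(\sigma')^{-1}(a)$ together with that between $\sigma^{-1}(a)$ and $(\sigma')^{-1}(a')$. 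This is precisely the target coloured graph, which is therefore $F$-abundant.

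The main obstacle I foresee is the degenerate case in which $\sigma^{-1}(a)$ or $\sigma^{-1}(b)$ is empty, since one of the peels is then unavailable. Fortunately the corresponding cross-edges in the target graph are also empty, so a simplified variant works: if $\sigma^{-1}(b) = \emptyset$ one peels in only $v^*$ of colour $a'$ attached to $\sigma^{-1}(a)$ and then peels in a single fresh neighbour of $v^*$ of colour $a$ to create a glueing edge, and symmetrically if $\sigma^{-1}(a) = \emptyset$; if both are empty, the target is the disjoint union $H \sqcup H'$, which is $F$-abundant by multiplying the embedding counts of $H$ and $H'$ and absorbing the $\cO(n^{\abs{H} + \abs{H'} - 1})$ pairs that share a vertex. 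No new ideas are required beyond the main case, so the substantive content of the proof is the bookkeeping verification above.
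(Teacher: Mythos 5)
Your proof is correct and takes essentially the same route as the paper's: apply \cref{lem:peel_abundant} twice to attach to $(H,\sigma)$ an edge $u^*v^*$ whose endpoints carry colours $a$ and $a'$, then apply \cref{lem:glue_blowup_abundant} along that edge with $U'$ and $V'$ the colour-$a$ and colour-$a'$ classes of $H'$ (up to relabelling, this is exactly the paper's $(H_1,\sigma_1)$ with $v,u$ and the swapped $U',V'$). The only difference is that you also spell out the degenerate cases $\sigma^{-1}(a)=\emptyset$ or $\sigma^{-1}(b)=\emptyset$, which the paper passes over in silence; your fix for those is fine, though note that the symmetric degeneracies $(\sigma')^{-1}(a)=\emptyset$ or $(\sigma')^{-1}(a')=\emptyset$ (which would make the blow-up factor in \cref{lem:glue_blowup_abundant} zero) could be flagged in the same breath.
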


\begin{proof}
    Let $(H_1, \sigma_1)$ be the coloured graph obtained from $(H, \sigma)$ by adding two vertices $u$ and $v$ where:
    \begin{itemize}[noitemsep]
        \item $u$ is joined to $v$,
        \item $u$ is joined to all vertices of colour $a$ in $H$,
        \item $v$ is joined to all vertices of colour $b$ in $H$,
        \item $\sigma(u) = a'$, and
        \item $\sigma(v) = a$.
    \end{itemize}
    Applying \cref{lem:peel_abundant} to $(H, \sigma)$ and adding the vertices $v$ and $u$ in that order shows that $(H_1, \sigma_1)$ is $F$-abundant. Let $U' = \set{x \in H' : \sigma'(x) = a'}$ and $V' = \set{x \in H' : \sigma'(x) = a}$. Applying \cref{lem:glue_blowup_abundant} to $(H_1, \sigma_1)$ and $(H', \sigma')$ gives the claimed result.
\end{proof}

The case $a' = b$ in \cref{lem:join_abundant} is particularly clean, and yields the following family.

\begin{definition}
    A graph $H$ is \defn{$F$-splittable} if there is an $F$-colouring $\sigma$ of $H$ with the following property. Every subgraph $H'$ of $H$ has an edge cut-set whose edges span only two colours.
\end{definition}

From the previous theorem it follows that all splittable graphs are abundant.

\begin{theorem}\label{thm:splittable_abundant}
    Every $F$-splittable graph is $F$-abundant.
\end{theorem}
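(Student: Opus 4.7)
The strategy is to induct on $\abs{H}$, using \cref{lem:join_abundant} in its diagonal special case $a' = b$ as the inductive engine. The base case $\abs{H} = 1$ is immediate: by \cref{lem:euniffacts} the part $V_{\sigma(v)}$ of any uniformly $\e$-far structure has at least $\e n$ vertices (we may assume $F$ has no isolated vertices in the image of $\sigma$), so a single-vertex coloured graph has at least $\poly(\e) \cdot n$ copies.

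For the inductive step, apply $F$-splittability of $H$ to $H$ itself to obtain a bipartition $V(H) = V_1 \sqcup V_2$ with $V_1, V_2 \neq \emptyset$ such that every $V_1$--$V_2$ edge uses only some fixed pair of colours $\{a, b\} \subseteq V(F)$. If any cross-edge exists then $ab \in E(F)$ automatically; otherwise pick any edge $ab \in E(F)$ (and if $E(F) = \emptyset$ the theorem is trivial). Set $H_i \coloneqq H[V_i]$ with the inherited colouring. Any subgraph of $H_i$ is also a subgraph of $H$ and so inherits the defining cut-set property, so both $H_1$ and $H_2$ are themselves $F$-splittable on fewer vertices, and hence $F$-abundant by induction.

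I then invoke \cref{lem:join_abundant} with $(H_1, \sigma|_{V_1})$ and $(H_2, \sigma|_{V_2})$ and the colours $a$, $b$, and $a' \coloneqq b$; the hypothesis $ab, aa' \in E(F)$ collapses to $ab \in E(F)$. The output $H^{*}$ is an $F$-abundant coloured graph on vertex set $V(H) = V_1 \sqcup V_2$ whose edge set is $E(H_1) \cup E(H_2)$ together with \emph{every} $V_1$--$V_2$ edge joining a colour-$a$ and a colour-$b$ vertex. Since every cross-edge of $H$ has precisely this form, $H$ is a coloured subgraph of $H^{*}$, and the subgraph-closure part of \cref{lem:abundant_facts} (which carries over verbatim to coloured graphs, as copies of the subgraph are at least as numerous as copies of the supergraph) gives that $H$ is $F$-abundant, completing the induction. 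The main thing to verify carefully is that the complete bipartite cross-pattern between $a$- and $b$-coloured vertices produced by \cref{lem:join_abundant} really contains all cross-edges of $H$; this is guaranteed precisely because splittability restricts those cross-edges to the single colour pair $\{a, b\}$.
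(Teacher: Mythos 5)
Your proof is correct and follows essentially the same strategy as the paper's: recursively split along a two-coloured edge cut-set and recombine via the $a' = b$ case of \cref{lem:join_abundant}, finishing with subgraph closure. The only cosmetic difference is that you induct on $\abs{H}$ (passing splittability down to the two induced sides) rather than on $e(H)$ (where the paper instead proves abundance simultaneously for every subgraph of $H$); your induction parameter arguably handles termination a little more cleanly, since both sides of the cut automatically have strictly fewer vertices regardless of whether the cut-set is empty.
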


\begin{proof}
    Let $H$ be $F$-splittable and $\sigma$ the $F$-colouring that witnesses this. We prove that $(H', \sigma\vert_{V(H')})$ is $F$-abundant for every subgraph $H'$ of $H$. We do this by induction on the number of edges. The result holds for all subgraphs with no edges.

    Let $H'$ be a subgraph of $H$ with at least one edge. As $H$ is $F$-splittable with witness $\sigma$, there are two colours $a$ and $b$ such that the set $E_{ab}$ of edges in $H'$ spanning these two colours is an edge cut-set of $H'$. Let $H_1$ and $H_2$ be the two sides of the cut. Since $H'$ has at least one edge we can assume that it has an edge whose endpoints have colours $a$ and $b$. In particular, $H_1$ and $H_2$ have fewer edges than $H'$ and so, by induction, $(H_1, \sigma\vert_{V(H_1)})$ and $(H_2, \sigma\vert_{V(H_2)})$ are $F$-abundant. Applying \cref{lem:join_abundant} with $a' = b$ shows that $(H', \sigma\vert_{V(H')})$ is $F$-abundant.
\end{proof}

\subsection{\texorpdfstring{$F$}{F}-splittable graphs with chromatic number \texorpdfstring{$\chi(F)$}{chi(F)}}

In this subsection we prove \cref{thm:FabundantchiF}. We first define an operation that converts an $F$-abundant graph into a more complicated one. Let $\sigma$ be a surjective $F$-colouring of a graph $H$. For $e = ab \in E(F)$ we define $(H, \sigma)_e$ as the disjoint union of two copies of $(H, \sigma)$, say $(H_1, \sigma_1)$ and $(H_2, \sigma_2)$, where all vertices in $\sigma_1^{-1}(a)$ are joined to all vertices in $\sigma_2^{-1}(b)$ and all vertices in $\sigma_1^{-1}(b)$ are joined to all vertices in $\sigma_2^{-1}(a)$. 
If $(H, \sigma)$ is $F$-splittable, then so is $(H, \sigma)_e$.

Using this operation, we now construct a sequence of increasingly complicated $F$-splittable graphs. 
Let $e_1, \dotsc e_{e(F)}$ be an enumeration of the edges of $F$. We start with an $F$-splittable $F$-coloured graph $(H, \sigma)$ where $\sigma$ is surjective (for example, one could take $H$ to be $\abs{F}$ isolated vertices and $\sigma$ a bijection from $V(H)$ to $V(F)$). We define a sequence of graphs $(H^1, \sigma_1) = (H, \sigma)$, $(H^2, \sigma_2)$, \ldots, where $(H^{j + 1}, \sigma_{j + 1}) \coloneqq (H^j, \sigma_j)_{e_{j \pmod{e(F)}}}$ for each $j$. By \cref{thm:splittable_abundant}, the graph $H^m$ is $F$-abundant for every positive integer $m$.

\begin{lemma}\label{lem:Hm_homomorphisms}
    Let $G$ be a graph with $F \nrightarrow G$. Then, for all sufficiently large $m$, $H^m \nrightarrow G$. 
\end{lemma}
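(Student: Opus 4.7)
The plan is to prove the contrapositive: if $H^m \to G$ for arbitrarily many $m$, then $F \to G$. Because $H^{m-1}$ sits inside $H^m$ as a subgraph (one of the two copies used to build $H^m$), this hypothesis is in fact equivalent to $H^m \to G$ for every $m \geq 1$. To each homomorphism $\psi \colon H^m \to G$ I would associate its \emph{colour profile} $T(\psi) = (T_c(\psi))_{c \in V(F)}$, where $T_c(\psi) = \psi(\sigma_m^{-1}(c)) \subseteq V(G)$. Since $H^{j+1} = (H^j,\sigma_j)_{e}$ is built by gluing two copies of $H^j$ across an edge $e = \{a,b\} \in E(F)$, restricting $\psi$ to the two copies yields sub-profiles $(T_c^L), (T_c^R)$ satisfying $T_c(\psi) = T_c^L \cup T_c^R$ for every $c$, together with the adjacency conditions $T_a^L \times T_b^R \subseteq E(G)$ and $T_b^L \times T_a^R \subseteq E(G)$. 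Iterating this decomposition produces, for each $\psi\colon H^m \to G$, a binary \emph{profile tree} of depth $m-1$ whose nodes carry profiles and whose combining edges cycle through $E(F)$ with period $e(F)$.

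The profile space $(2^{V(G)})^{V(F)}$ is finite, so after first fixing a residue class of $m$ modulo $e(F)$ (so that the edge used at each level is consistent across different $m$) and applying a standard König-style compactness argument, the existence of profile trees of arbitrary depth yields an \emph{infinite} profile tree $\mathcal{T}$ obeying the same recursion. The key step is then the following minimality trick. Only finitely many distinct profiles appear at the nodes of $\mathcal{T}$, so there is a profile $(S_c)$ that is minimal among them in the component-wise subset order. Pick a node $v \in \mathcal{T}$ with profile $(S_c)$. Its two children have profiles contained in $(S_c)$ (since $(S_c)$ is their union), but these also appear in $\mathcal{T}$, so by minimality each must equal $(S_c)$. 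Iterating, every descendant of $v$ has profile $(S_c)$.

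Consequently, at every level $\ell$ below $v$, the adjacency condition for the combining edge $\{a_\ell, b_\ell\}$ becomes $S_{a_\ell} \times S_{b_\ell} \subseteq E(G)$. Because the sequence of edges cycles through all of $E(F)$ with period $e(F)$ and the subtree beneath $v$ has infinite depth, this yields $S_a \times S_b \subseteq E(G)$ for \emph{every} edge $\{a,b\} \in E(F)$. Each $S_c$ is nonempty because $\sigma_m$ is surjective for all $m$, a property preserved by $(-)_e$ from the surjectivity of the initial $\sigma$. Picking any $x_c \in S_c$ then defines a map $\phi \colon V(F) \to V(G)$ by $\phi(c) = x_c$, and for every edge $\{a,b\} \in E(F)$ the pair $(x_a, x_b)$ lies in $S_a \times S_b \subseteq E(G)$, so $\phi$ is a homomorphism $F \to G$, contradicting $F \nrightarrow G$. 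The main obstacle is setting up the compactness step carefully so that the cyclic edge-structure of the tree is preserved; once this is in hand, the minimality argument producing a self-compatible profile is what does the real work.
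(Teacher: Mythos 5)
Your proof is correct and takes a genuinely different route from the paper's, though both track the same profile data. The paper works inside a single homomorphism $\varphi\colon H^m\to G$ for one fixed large $m$: it descends a chain $H^m\supset H^{m-1}\supset\dots\supset H^1$ by always choosing, among the two canonical copies of $H^{j}$ inside $H^{j+1}$, the one whose profile has the smaller total size $\sum_{v}\abs{S^j_i(v)}$, and then applies pigeonhole to this single chain of nested profiles to find an interval of length greater than $e(F)$ on which the profile is constant. At such an interval the greedy choice forces the \emph{discarded} sibling's profile to also equal the constant one, which together with $F\nrightarrow G$ yields a non-adjacent pair of colours that a cross-gluing edge of $H^j_1\cup H^j_2$ forces to be adjacent in $G$, a contradiction. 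You instead retain the whole binary profile tree, pass to an infinite one by compactness after fixing $m$ modulo $e(F)$, and pick a profile that is minimal in the componentwise subset order; minimality propagates the profile to both children automatically (no greedy choice is needed), the infinite depth guarantees every edge of $F$ appears as a gluing edge below the chosen node, and you read off a homomorphism $F\to G$ directly. The paper's argument is fully finitary and gives an explicit threshold for ``sufficiently large $m$'' depending only on $\abs{F}$, $\abs{G}$, and $e(F)$; yours trades that explicit bound for a cleaner conceptual structure via K\"onig's lemma, with the caveat, which you acknowledge, that the compactness step assembling the infinite tree with the correct cyclic edge labelling requires careful setup.
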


\begin{proof}
    Take $m$ sufficiently large in terms of $F$ and $G$. Suppose for a contradiction that there is a homomorphism $\varphi$ from $H^m$ to $G$. We view $\varphi$ as a $G$-colouring of $H^m$. For all $v \in V(F)$, let $S^m(v)$ be the set of colours appearing on the vertices corresponding to $v$ in $H^m$. Note that each colour is a vertex of $G$. We denote the two canonical copies of $H^i$ within $H^{i + 1}$ by $H^i_1$ and $H^i_2$.

    Now look inside $H^m = H^{m - 1}_1 \cup H^{m - 1}_2$. For $v \in V(F)$ and $i \in \set{1, 2}$, let $S^{m - 1}_i(v)$ be the set of colours appearing on vertices corresponding to $v$ in $H^{m - 1}_i$. Note that $S^m(v) = S^{m - 1}_1(v) \cup S^{m - 1}_2(v)$. Compare $\sum_{v \in V(F)} \abs{S^{m - 1}_1(v)}$ and $\sum_{v \in V(F)} \abs{S^{m - 1}_2(v)}$ and let $i \in \set{1, 2}$ be such that $\sum_{v \in V(F)} \abs{S^{m - 1}_i(v)}$ is the smaller (or equal) of the two sums. Define $H^{m - 1} \coloneqq H^{m - 1}_i$ and $S^{m - 1}(v) \coloneqq S^{m - 1}_i(v)$ for each $v \in V(F)$. Now look inside $H^{m - 1} = H^{m - 2}_1 \cup H^{m - 2}_2$ and iterate.

    Doing this produces a sequence of graphs $H^m \supset H^{m - 1} \supset \dotsb \supset H_1$ and, for each $v \in V(F)$, a sequence of sets of colours $S^m(v) \supset S^{m - 1}(v) \supset \dotsb \supset S^1(v)$ where $\sum_{v \in V(F)} \abs{S^j(v)}$ is the minimum of $\sum_{v \in V(F)} \abs{S^j_1(v)}$ and $\sum_{v \in V(F)} \abs{S^j_2(v)}$.

    We apply the pigeonhole principle to sequence of $k$-tuples $(S^j(v) : v \in V(F))$ where $j$ ranges from $1$ to $m$. Provided $m$ is sufficiently large, there will be more than $e(F)$ indices $j$ and some tuple $(S(v) : v \in V(F))$ such that $S_j(v) = S(v)$ for all $v \in V(F)$ and for all such $j$. Note that, crucially, $m$ just depends on $F$ and $G$. Since for each $v$ the sequence $S^j(v)$ is nested, we in fact will have an interval $I \subset \set{1, 2, \dotsc, m}$ of length greater than $e(F)$ such that $S_j(v) = S(v)$ for all $v \in V(F)$ and $j \in I$.

    Now, $F$ is not homomorphic to $G$ and each $S(v)$ is a non-empty set of colours (non-empty set of vertices of $G$). So, there must be adjacent vertices $x, y \in V(F)$ and non-adjacent vertices $c, d \in V(G)$ such that $c \in S(x)$ and $d \in S(y)$.
    Since $I$ is an interval of length greater than $e(F)$, there is a $j$ such that $j$ and $j + 1$ are both in $I$ and $e_{j \pmod{e(F)}} = xy$. We assume without loss of generality that $H^j = H^j_1$, so $S^j_1(v) = S^j(v) = S(v)$ for all $v \in V(F)$, and
    \begin{equation}\label{eq:Sineq}
        \sum_{v \in V(F)} \abs{S^j_1(v)} \leq \sum_{v \in V(F)} \abs{S^j_2(v)}.
    \end{equation}
    Now, for all $v \in V(F)$, $S_2^j(v) \subset S^{j + 1}(v) = S(v) = S_1^j(v)$. Thus, by \eqref{eq:Sineq}, we must have $S_2^j(v) = S_1^j(v) = S(v)$ for all $v \in V(F)$. Since $c \in S(x) = S_1^j(x)$, there is a vertex $u_x \in V(H_1^j)$ corresponding to $x$ with $\varphi(u_x) = c$. Similarly, since $d \in S(y) = S_2^j(y)$, there is a vertex $u_y \in V(H_2^j)$ corresponding to $y$ with $\varphi(u_y) = d$.

    Since $e_{j \pmod{e(F)}} = xy$, every vertex in $H_1^j$ corresponding to $x$ is adjacent to every vertex in $H_2^j$ corresponding to $y$ and so $u_x$ is adjacent to $u_y$. Since $\varphi$ is a homomorphism to $G$, $\varphi(u_x) = c$ must be adjacent to $\varphi(u_y) = d$. However, $c$ and $d$ were chosen to not be adjacent, which is the required contradiction.
\end{proof}

Since the graph $H^m$ is $F$-splittable by construction, this proves \cref{thm:FabundantchiF} and shows that $F$-abundance cannot be characterised by looking at right homomorphisms. We combine this with the following result of Ne\v{s}et\v{r}il and Zhu.

\begin{theorem}[{\cite[Thm.~1.1]{NZ04}}]\label{thm:homomorphisms_girth}
    For every graph $H'$ and for all positive integers $k$ and $\ell$ there exists a graph $H$ with the following properties:
    \begin{itemize}
        \item $H$ has girth greater than $\ell$, and
        \item for every graph $G$ on at most $k$ vertices, there is a homomorphism $H \to G$ if and only if there is a homomorphism $H' \to G$.
    \end{itemize}
\end{theorem}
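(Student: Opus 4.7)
The plan is a two-stage reduction. First, exploit finiteness: up to isomorphism only finitely many graphs exist on at most $k$ vertices, so let $G_1, \dotsc, G_m$ enumerate those for which $H' \nrightarrow G_i$. I claim that if, for each $i$, I can find a graph $F_i$ of girth greater than $\ell$ with $F_i \to H'$ but $F_i \nrightarrow G_i$, then the disjoint union $H \coloneqq F_1 \sqcup \dotsb \sqcup F_m$ satisfies the theorem. Indeed, a homomorphism out of a disjoint union is a choice of homomorphism on each component, so $H \to G$ iff $F_i \to G$ for all $i$: composing each $F_i \to H'$ with any $H' \to G$ shows $H \to G$ whenever $H' \to G$, while if $\abs{G} \leq k$ and $H' \nrightarrow G$ then $G \cong G_i$ for some $i$ and $F_i \nrightarrow G_i$ forces $H \nrightarrow G$. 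The girth of a disjoint union is the minimum of the components' girths, so the girth condition is inherited.

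The real work is a ``sparse incomparability'' claim: for every pair $H' \nrightarrow G$ and every $\ell$, produce a graph $F$ of girth greater than $\ell$ with $F \to H'$ but $F \nrightarrow G$. My approach is an Erd\H{o}s-style random blow-up. Fix $\alpha \in (0, 1/\ell)$ and $N$ large, and form a random graph $R$ on vertex set $[N] \times V(H')$ by including each potential edge $\set{(i,x),(j,y)}$ with $xy \in E(H')$ independently with probability $p = N^{-1+\alpha}$. The projection $(i,x) \mapsto x$ is automatically a homomorphism $R \to H'$, and a routine first-moment computation bounds the expected number of cycles of length at most $\ell$ by $\cO(N^{\alpha \ell}) = o(N)$.

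The main obstacle is showing that, with positive probability, $R$ admits no homomorphism to $G$. The key observation is that $H' \nrightarrow G$ forces every choice of one vertex per fibre to induce a map $V(H') \to V(G)$ that destroys at least one edge of $H'$; averaging over all $N^{\abs{H'}}$ such choices shows that any candidate map $\phi \colon V(R) \to V(G)$ sends at least $N^2$ potential edges to non-edges of $G$. Hence the probability that $\phi$ is edge-preserving for the random graph $R$ is at most $(1-p)^{N^2} \leq e^{-N^{1+\alpha}}$. Union-bounding over the $\abs{G}^{N\abs{H'}}$ candidate maps, the expected number of homomorphisms $R \to G$ is at most $\exp(N\abs{H'}\log\abs{G} - N^{1+\alpha}) = o(1)$. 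Markov's inequality then supplies a realisation of $R$ with no homomorphism to $G$ and only $o(N)$ short cycles; deleting one vertex per short cycle gives the required $F$. A deterministic alternative is the Ne\v{s}et\v{r}il--R\"{o}dl partite construction, which iteratively amalgamates copies of $H'$ while killing short cycles and preserving non-homomorphism into $G$ at each step.
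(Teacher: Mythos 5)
The paper itself does not prove this theorem --- it is imported wholesale as a black box from Ne\v{s}et\v{r}il--Zhu \cite{NZ04}. So your proof is, by construction, ``a different route''; the question is whether it works. Your reduction to a single target $G$ is clean and correct: only finitely many $G$ on $\leq k$ vertices, a disjoint union of the corresponding $F_i$'s inherits the girth condition and forces the two-sided equivalence of homomorphism existence. (You should note the degenerate case $m=0$, where $H$ is the empty graph, but this is harmless.) The random-blow-up construction of each $F_i$, the transversal/averaging argument giving $\geq N^2$ bad potential edges for every $\phi\colon V(R)\to V(G)$, and the resulting first-moment bound $\mathbb{E}[\#\mathrm{hom}(R,G)] \leq |G|^{N|H'|}e^{-N^{1+\alpha}} = o(1)$ are all correct.

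The gap is in the very last step: \emph{``deleting one vertex per short cycle gives the required $F$.''} Non-homomorphism into $G$ is not a subgraph-closed property --- $K_4\nrightarrow K_3$ but $K_4 - v \to K_3$ --- so the fact that $R\nrightarrow G$ tells you nothing about the deleted graph $F$. Your first-moment computation only controls the event that some $\phi$ is edge-preserving on \emph{all} of $R$, and it collapses once you are allowed to throw edges away. The repair is to prove a quantitatively stronger statement and delete edges rather than vertices: for a fixed $\phi$, the number of bad edges actually present in $R$ is a binomial with at least $N^2$ trials of success probability $p$, hence mean $\geq N^{1+\alpha}$, and by a Chernoff bound lies below (say) $N$ with probability at most $e^{-\Omega(N^{1+\alpha})}$. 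Since $N^{1+\alpha}\gg N|H'|\log|G|$, the union bound over all $\phi$ still goes to zero, so whp \emph{every} map $V(R)\to V(G)$ has $\geq N$ bad edges of $R$ simultaneously with $R$ having $o(N)$ short cycles. Deleting one \emph{edge} per short cycle then removes $o(N) < N$ edges, leaves the vertex set (and hence the set of candidate maps) unchanged, preserves a bad edge for every $\phi$, preserves the projection to $H'$, and pushes the girth above $\ell$. With this patch your probabilistic argument is complete and is genuinely more elementary than the Ne\v{s}et\v{r}il--R\"{o}dl partite amalgamation that you mention as an alternative (and which is closer to what \cite{NZ04} actually does).
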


\begin{corollary}\label{cor:abundant_girth}
    Let $\ell$ be a positive integer and $F$ and $G$ be graphs with $F \nrightarrow G$. There is an $F$-abundant graph $H$ with girth greater than $\ell$ and $H \nrightarrow G$. 
\end{corollary}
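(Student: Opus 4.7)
The plan is simply to combine \cref{thm:FabundantchiF} with the Nešetřil--Zhu sparsification result \cref{thm:homomorphisms_girth}. The former provides an $F$-abundant witness of the hypothesis $F \nrightarrow G$, and the latter allows us to replace that witness by a high-girth graph without disturbing any of its finite homomorphic behaviour, in particular without losing $F$-abundance or introducing a homomorphism to $G$.

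First, I would apply \cref{thm:FabundantchiF} to $F$ and $G$ to obtain an $F$-abundant graph $H'$ with $H' \nrightarrow G$. Second, I would set $k \coloneqq \max(\abs{H'}, \abs{G})$ and apply \cref{thm:homomorphisms_girth} to the graph $H'$ with parameters $k$ and $\ell$, obtaining a graph $H$ of girth greater than $\ell$ such that, for every graph $G'$ on at most $k$ vertices, $H \to G'$ if and only if $H' \to G'$.

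It then remains to verify the two conclusions for $H$. For the non-homomorphism condition, $\abs{G} \leq k$ together with $H' \nrightarrow G$ gives $H \nrightarrow G$ directly from \cref{thm:homomorphisms_girth}. For $F$-abundance, observe that $\abs{H'} \leq k$ and $H'$ admits the identity homomorphism to itself, so \cref{thm:homomorphisms_girth} yields $H \to H'$. Since $H'$ is $F$-abundant and $F$-abundance is preserved under taking homomorphic preimages by part~3 of \cref{lem:abundant_facts}, we conclude that $H$ itself is $F$-abundant.

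There is no serious obstacle here: both ingredients have been stated as black boxes and the only minor point to watch is choosing $k$ large enough to simultaneously control the behaviour at $G$ (to preserve $H \nrightarrow G$) and at $H'$ itself (to pull abundance back along $H \to H'$), which the choice $k = \max(\abs{H'}, \abs{G})$ handles cleanly.
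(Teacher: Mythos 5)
Your proof is correct and follows essentially the same route as the paper: invoke \cref{thm:FabundantchiF} to get an $F$-abundant $H'$ with $H' \nrightarrow G$, apply \cref{thm:homomorphisms_girth} with $k = \max\set{\abs{H'}, \abs{G}}$ to obtain a high-girth $H$ with $H \to H'$ and $H \nrightarrow G$, and then use part~3 of \cref{lem:abundant_facts} to transfer abundance along $H \to H'$.
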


\begin{proof}
    By \cref{thm:FabundantchiF}, there is an $F$-abundant graph $H'$ with $H' \nrightarrow G$. Applying \cref{thm:homomorphisms_girth} with $k = \max\set{\abs{H'}, \abs{G}}$ gives a graph $H$ with girth greater than $\ell$, $H \to H'$, and $H \nrightarrow G$. Since $H'$ is $F$-abundant and $H \to H'$, the graph $H$ is also $F$-abundant.
\end{proof}

\subsection{Effective Abundance}\label{sec:effective}

Note that in the above arguments we were interested in showing abundance without paying too much attention to the degree of the polynomial in $\eps$. 
In some cases, we can actually show much better bounds on the degrees of these polynomials. This might be useful in the setting of very sparse graphs.
Below, we show how to do this in the most basic case of the triangle-abundance of $C_5$. First, we show that we can find many paths with four vertices of a certain type in a graph that is far from triangle-free.

\begin{proposition}\label{Prop: paths3}
    Let $p\geq 100/n$ and let $G$ be an $n$-vertex tripartite graph with tripartition $V(G)=A\cup B\cup C$ that contains at least $pn^2$ edges-disjoint triangles. Then, $G$ contains at least $\Omega(p^3n^4)$ paths of length $3$ of the form $x_1x_2x_3x_4$ with $x_1,x_4\in A$, $x_2\in B$, and $x_3\in C$.
\end{proposition}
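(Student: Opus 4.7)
Each desired path $x_1 x_2 x_3 x_4$ is determined by its middle edge $bc = x_2 x_3 \in E_{BC}$ and its two endpoints $a_1 = x_1 \in N_A(b)$, $a_2 = x_4 \in N_A(c)$, subject to $a_1 \neq a_2$. Hence the total count of such paths equals
\begin{equation*}
    N \;=\; \sum_{bc \in E_{BC}} \bigl(d_A(b)\, d_A(c) \;-\; |N_A(b) \cap N_A(c)|\bigr),
\end{equation*}
where the subtracted term accounts for the walks that close up into triangles. The proof will lower-bound this sum by restricting to a subset of $BC$-edges coming from a cleaned sub-collection of triangles on which both $d_A(b)$ and $d_A(c)$ are guaranteed to be large.

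\textbf{Step 1 (cleanup).} Let $\mathcal{C}$ be the given collection of $\geq pn^2$ edge-disjoint triangles. For each $b \in B$, let $t_b$ denote the number of triangles of $\mathcal{C}$ through $b$; edge-disjointness of $\mathcal{C}$ forces the corresponding $ab$-edges to be distinct, so $d_A(b) \geq t_b$, and similarly $d_A(c) \geq t_c$. Call $b \in B$ (or $c \in C$) \emph{bad} if $d_A(b) < pn/20$ (resp.\ $d_A(c) < pn/20$). The number of triangles of $\mathcal{C}$ through any bad $B$-vertex is at most $t_b \leq d_A(b) < pn/20$, so summing over the $\leq n$ bad $B$-vertices shows that at most $pn^2/20$ triangles are lost; the same holds on the $C$-side. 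Thus the sub-collection $\mathcal{C}' \subset \mathcal{C}$ of triangles whose $B$- and $C$-vertex are both good has size $\abs{\mathcal{C}'} \geq pn^2 - pn^2/10 = 9pn^2/10$.

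\textbf{Step 2 (path count at a good $BC$-edge).} The $BC$-edges $\{bc : abc \in \mathcal{C}'\}$ form a set $S \subset E_{BC}$ of size $\abs{S} = \abs{\mathcal{C}'}$ (edge-disjointness), and every $bc \in S$ satisfies $d_A(b), d_A(c) \geq pn/20$. Since $pn \geq 100$, we have $pn/20 \geq 5$, so
\begin{equation*}
    |N_A(b) \cap N_A(c)| \;\leq\; \min(d_A(b), d_A(c)) \;=\; \frac{d_A(b)\,d_A(c)}{\max(d_A(b), d_A(c))} \;\leq\; \frac{d_A(b)\,d_A(c)}{pn/20} \;\leq\; \tfrac{1}{5} d_A(b)\, d_A(c).
\end{equation*}
Consequently, for every $bc \in S$ the number of paths with middle edge $bc$ is at least $\tfrac{4}{5} d_A(b)\, d_A(c) \geq \tfrac{4}{5} (pn/20)^2$.

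\textbf{Step 3 (summation).} Each path determines its middle edge uniquely, so summing only over $bc \in S$ still gives a lower bound on $N$:
\begin{equation*}
    N \;\geq\; \sum_{bc \in S} \tfrac{4}{5}\, d_A(b)\, d_A(c) \;\geq\; \tfrac{4}{5}\, \abs{S} \cdot (pn/20)^2 \;\geq\; \tfrac{4}{5} \cdot \tfrac{9pn^2}{10} \cdot \tfrac{p^2 n^2}{400} \;=\; \Omega(p^3 n^4).
\end{equation*}

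\textbf{Main obstacle.} The crux of the argument is that without a min-degree guarantee one cannot directly compare the "walk term" $d_A(b)d_A(c)$ with the "triangle term" $|N_A(b) \cap N_A(c)|$: the latter could a priori swallow the former. The hypothesis $p \geq 100/n$ is precisely what makes the cleanup step both lose only a fraction of $\mathcal{C}$ and produce a degree threshold $pn/20$ that is large enough (in particular $\geq 2$) to ensure that the triangle correction on $S$ is a small fraction of $d_A(b) d_A(c)$. All other steps are routine double counting and Cauchy--Schwarz-free arithmetic.
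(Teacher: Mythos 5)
Your argument is correct and takes a genuinely different route from the paper. The paper first reduces to the case where $E(G)$ is exactly the union of the edge-disjoint triangles (so that $d_A(b)=d_C(b)$ for $b\in B$, etc.), lower-bounds the desired count by the number of $P_3$'s inside the bipartite graph $G[B,C]$, and then invokes Sidorenko's theorem for paths. You instead run a min-degree cleanup on the triangle collection itself---discard the at most $pn^2/10$ triangles touching a $B\cup C$-vertex with $d_A<pn/20$---and count directly over the surviving middle $BC$-edges, using $|N_A(b)\cap N_A(c)|\le\min(d_A(b),d_A(c))\le\tfrac15\,d_A(b)\,d_A(c)$ (this is precisely where $pn\ge 100$ is used) to absorb the walk-to-path correction. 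Your version is elementary and self-contained: it needs no external theorem, works in $G$ itself without first passing to the union of the triangles, and never has to translate between homomorphism counts (what Sidorenko naturally produces) and injective path counts. The paper's version is shorter on the page and leans on a standard tool. Both proofs hinge on the same initial observation that edge-disjointness gives $d_A(b)\ge t_b$.
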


\begin{proof}
    We may assume that $E(G)$ is the disjoint union of $pn^2$ edge-disjoint triangles. Observe that for every $x\in A$, $d_B(x)=d_C(x)$ (and analogously for $B$ and $C$). 
    
    Simple counting gives that the number of $P_3$'s with middle edge in $G[B,C]$ is at least
    \begin{equation*}
        \sum_{(x,y)\in E(B,C)} d_A(x) (d_A(y)-1) \ge \sum_{(x,y)\in E(B,C)} (d_C(x)-1) (d_B(y)-1) = \# P_3\text{'s in } G[B,C].
    \end{equation*}
    Using the fact that $P_3$ is Sidorenko (see e.g.\ \cite[Theorem~5.5.10]{Zhaobook}) and $E(G[B,C])=pn^2$, there must be at least $\Omega(p^3n^4)$ paths of length $3$ as required. 
\end{proof}

We now use a strategy similar to \cref{lem:peel_abundant} for extending this result to show that a graph that is far from triangle-free contains many copies of $C_5$.

\begin{lemma}\label{lem:effective}
    Let $100n^{-1/2}\leq p \le n^{-\gamma}$ for some $\gamma > 0$. 
    Then, for every $\e'>0$, there exists $C(\e') > 0$ such that every $n$-vertex graph $G$ which is $p$-far from being triangle-free contains $C(\e') p^{6+\e'} n^5$ copies of $C_5$. 
\end{lemma}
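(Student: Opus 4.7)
The plan is to apply the peel strategy of \cref{lem:peel_abundant} quantitatively, using the $3$-colouring of $C_5$ in which $v_1\in A$, $v_2\in B$, $v_3\in A$, $v_4\in C$, $v_5\in B$, so that $v_1$ is the unique vertex whose two neighbours (here $v_2$ and $v_5$) lie in a single colour class $B$. With this colouring, every copy of $C_5$ is uniquely described by a vertex $v_1\in A$ together with a path $P_4$ of the form $B{-}A{-}C{-}B$ in the tripartite induced subgraph $G_1(v_1)\coloneqq G[N_B(v_1), A\setminus\{v_1\}, C]$ whose two $B$-endpoints both lie in $N_B(v_1)$; adjoining $v_1$ to such a path closes it into a $C_5$, and two such $C_5$'s obtained from distinct choices of $v_1$ are themselves distinct.

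A uniformly random $3$-partition of $V(G)$ reduces us, at the cost of a constant factor, to the case that $G$ is tripartite with tripartition $A\cup B\cup C$ carrying $\Omega(pn^2)$ edge-disjoint triangles. Double-counting pairs $(v_1, T)$ with $v_1\in A$ and $T$ a triangle whose $B$-vertex lies in $N_B(v_1)$, and applying Cauchy--Schwarz to the triangle-degree sequence on $B$, yields $\sum_{v_1\in A}\#\{\text{triangles in }G_1(v_1)\}\ge \Omega(p^2 n^3)$. Markov's inequality then produces $\Omega(pn)$ ``heavy'' vertices $v_1\in A$ for which $|N_B(v_1)|=\Omega(pn)$ and $G_1(v_1)$ contains $\Omega(p^2 n^2)$ edge-disjoint triangles. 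The hypothesis $p\ge 100 n^{-1/2}$ ensures that the triangle density $\Omega(p^2)$ in each such $G_1(v_1)$ satisfies the hypothesis of \cref{Prop: paths3}, and invoking that proposition (with $N_B(v_1)$ playing the role of the first part) produces $\Omega(p^6 n^4)$ paths of the form $B{-}A{-}C{-}B$ with the required endpoint condition. Summing the contributions of the heavy $v_1$'s therefore gives $\Omega(p^7 n^5)$ copies of $C_5$, which already handles the case $\eps'\ge 1$.

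The main obstacle is sharpening the exponent from $7$ down to $6+\eps'$ for arbitrarily small $\eps'>0$, and this is where the upper bound $p\le n^{-\gamma}$ on the sparsity is needed. The improvement comes from a dyadic refinement of the pigeonholing above, applied simultaneously to the triangle-degrees in $A$ and to those of the $B$-vertices in $N_B(v_1)$: for each of the $O(\log^2(1/p))$ relevant pairs of dyadic scales, the analogous counting step yields a polynomial lower bound of degree $6+o_\gamma(1)$ in $p$, and the hypothesis $p\le n^{-\gamma}$ is precisely what allows the cumulative multiplicative $\log$-factor to be absorbed into a single $p^{\eps'}$ loss. Arranging this bookkeeping so that the parameter $\tilde p$ of $G_1(v_1)$ remains large enough for \cref{Prop: paths3} to apply at every scale is the technically most delicate point of the argument.
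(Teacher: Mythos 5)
The first half of your argument is sound and shares the paper's basic architecture: random tripartition, the observation that $C_5$ decomposes as a special vertex $v_1$ with a monochromatic neighbourhood plus a $P_4$ of the kind counted in \cref{Prop: paths3}, and an application of \cref{Prop: paths3} in the auxiliary tripartite graph attached to $v_1$. (One small inaccuracy: for a ``heavy'' $v_1$ your bound $s(v_1)\geq\Omega(p^2n^2)$ only gives $\abs{N_B(v_1)}\geq s(v_1)/n = \Omega(p^2 n)$, not $\Omega(pn)$ -- but this is harmless since the lower bound on $\abs{N_B(v_1)}$ is never actually used once \cref{Prop: paths3} is invoked on $G_1(v_1)$.) This yields $\Omega(p^7 n^5)$ copies of $C_5$, as you say.

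The real issue is the sharpening from exponent $7$ to $6+\e'$, which is the heart of the lemma. Here you offer only a programme (``a dyadic refinement of the pigeonholing above, applied simultaneously to the triangle-degrees in $A$ and to those of the $B$-vertices in $N_B(v_1)$\dots'') rather than an argument, and you yourself flag the verification of the \cref{Prop: paths3} hypothesis at every scale as ``the technically most delicate point''. That is precisely the step that needs to be carried out, and it is not at all clear from the sketch which dyadic parameters are being split, how the contributions at a single scale are being summed, or why the surviving triangle count in $G_1(v_1)$ at a chosen scale stays above the \cref{Prop: paths3} threshold. As written this is a gap, not a proof.

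The paper's mechanism for absorbing the extra factor of $p$ is genuinely different from dyadic pigeonholing. It runs a deterministic iterative sparsification: repeatedly discard the vertices of $A$ that lie in few edge-disjoint triangles, stopping once the surviving set $A_{i+1}$ has size at least $\delta\abs{A_i}$. The process halts after $t\leq\log n/\log(1/\delta)$ rounds, and each round only loses a $(1-\delta)$-fraction of the triangle system, so at termination one retains $\geq (p/5)n^{2-\e/9}$ triangles while every surviving vertex of $A$ has triangle-degree at least $\delta p_t n$. The same is done inside $B$, and then for each $x\in B_{s+1}$ one applies \cref{Prop: paths3} to $G[(N_G(x)\cap A_{t+1})\cup B\cup C]$, whose triangle count is now controlled from below thanks to the min-degree guarantees. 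The $n^{-\Theta(\e)}$ loss accumulated over the $O(\log n)$ rounds is converted into $p^{\e'}$ via $p\leq n^{-\gamma}$, exactly the role you intended for your $O(\log^2(1/p))$ dyadic scales. So the two strategies aim at the same numerology, but the paper's iterative min-degree extraction is a concrete, checkable process, whereas your dyadic decomposition is not yet one. You would need either to flesh out the dyadic bookkeeping in full, or to replace it with something like the paper's sparsification (or, alternatively, to exploit the convexity of $s\mapsto s^3$ more directly in the sum $\sum_{v_1} s(v_1)^3/m(v_1)^2$ rather than a single Markov threshold, which sidesteps the need for any scale-splitting at all).
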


\begin{proof}
    Let $\e = \e' \gamma$ and choose $\delta > 0$ sufficiently small. We randomly partition the vertices of $G$ into three parts of equal size. The probability that a triangle of $G$ has one vertex in each part of the partition is $2/9 > 1/5$, and so there exists a partition $V(G) = A \cup B \cup C$ such that $G$ has $(p/5) n^2$ edge-disjoint triangles with one vertex in each part. We delete all other edges from the graph. For a graph $H$, denote by $m(H)$ the number of edge-disjoint triangles in $H$.

    We now perform the following iterative process. We start with $A_0 = A$ and $p_0 = m(G) / (n \abs{A})$. Then, given $A_i \subset A$ and $p_i$, let $A_{i+1} \subset A_i$ be the set of those vertices that are contained in at least $\delta p_i n$ edge-disjoint triangles, and let $p_{i+1} = m(G[A_{i+1} \cup B \cup C]) / (n \abs{A_{i+1}})$. If $\abs{A_{i+1}} \ge \delta \abs{A_i}$, we stop at this step, and otherwise we continue.
    
    Define $G_i = G[A_i \cup B \cup C]$, so $p_i = m(G_i) / (n \abs{A_i})$. Note that $\abs{A_i} \le \delta^i n$. Moreover,
    \begin{equation*}
        m(G_{i+1}) \ge m(G_i) - \delta p_i n \abs{A_i} = (1 - \delta) m(G_i)
    \end{equation*}
    and so $m(G_i) \ge (1 - \delta)^i m(G) \ge (1 - \delta)^i (p/5) n^2$.

    \begin{claim}
        The process stops after $t \le \log(n) / \log(1/\delta)$ steps.
    \end{claim}

    \begin{proof}
        Indeed, after $i > \log(n) / \log(1/\delta)$ steps, we have $\abs{A_i} = \delta^i n < 1$ and so $\abs{A_i} = 0$. However, $m(G_i) \ge (1 - \delta)^i (p/5) n^2 > 0$ which gives a contradiction.
    \end{proof}

    Note that if $\delta$ is sufficiently small, then $m(G_t) \ge (1 - \delta)^t (p/5) n^2 \ge (p/5) n^{2-\e/9}$. In particular, $p_t \ge (p/5) n^{1-\e/9} / \abs{A_t}$, and so every vertex in $A_{t+1}$ is contained in at least $(\delta p / 5) n^{2-\e/9} / \abs{A_t}$ edge-disjoint triangles. Now, we apply the exact same argument to $G_{t+1}$ but with $B$ instead of $A$. This yields a graph $G' = G[A_{t+1} \cup B_{s+1} \cup C]$ for some subset $B_{s+1} \subset B$ with $m(G') \ge (p/5) n^{2-2\e/9}$ such that every vertex in $B_{s+1}$ is contained in at least $(\delta p / 5) n^{2-2\e/9} / \abs{B_s}$ edge-disjoint triangles and $\abs{B_{s+1}} \ge \delta \abs{B_s}$.

    Now, for each $x \in B_{s+1}$, consider the graph $G'' = G[(N_G(x) \cap A_{t+1}) \cup B \cup C]$. Note that $\abs{N_G(x) \cap A_{t+1}} \ge (\delta p / 5) n^{2-2\e/9} / \abs{B_s}$ since $x$ is contained in at least that many edge-disjoint triangles. So, by construction, we get $m(G'') \ge (\delta p / 5) n^{2-2\e/9} / \abs{B_s} \cdot (\delta p / 5) n^{2-\e/9} / \abs{A_t} = q n^2$ where $q = (\delta p / 5)^2 n^{2-\e/3} / (\abs{A_t} \abs{B_s})$. Applying \cref{Prop: paths3} shows that there are at least $\Omega(q^3 n^4)$ paths in $G''$, each of which can be extended by $x$ to a copy of $C_5$. Since $x \in B_{s+1}$ was arbitrary, it follows that in total there are at least
    \begin{equation*}
        \Omega(q^3 n^4 \abs{B_{s+1}}) = \Omega\left(\frac{\delta^6 p^6 n^{10-\e} \abs{B_{s+1}}}{\abs{A_t}^3 \abs{B_s}^3}\right) \ge \Omega(\delta^7 p^6 n^{5-\e}) \ge \Omega(\delta^6 p^{6+\e'} n^5) = C(\e') p^{6+\e'} n^5
    \end{equation*}
    copies of $C_5$ in $G$.
\end{proof}

We remark that in the special case that $p$ is of the order $n^{-1/2}$ the best results known gives on the order of $n^4$ copies of $C_5$, which is tighter by a factor of $n^{\eps'/2}$. 
The tightest result is due to Conlon, Fox, Sudakov and Zhao \cite{conlon2021regularity}, and makes use of sparse regularity.
They also give a construction of a graph containing $o(n^{2.442})$ copies of $C_5$ that cannot be made triangle free by deleting $o(n^{3/2})$ edges. 
This shows in particular that one cannot replace $p^{6+\eps'}$ by $p^5$ in \cref{lem:effective}.
It would be interesting to have similar constructions that work for larger $p$.

\section{Additive results}
\label{sec:additive}

In this section, we turn to a problem that is seemingly unrelated to abundance. We ask how large sets of integers can be without having solutions to certain equations, proving \cref{thm:genus2sqrt}. To be precise, consider an equation $E$ of the form
\begin{equation*}
    a_1 x_1 + \dots + a_k x_k = 0
\end{equation*}
with $a_i \neq 0$ for $i \in [k]$ but $a_1 + \dots + a_k = 0$. Ruzsa~\cite{Ruzsa} studied the following two quantities:
\begin{itemize}
    \item the size $R_E(N)$ of a largest set $\cA \subset [N]$ such that equation $E$ has no solutions with distinct integers $x_1, \dots, x_k \in \cA$, and
    \item the size $r_E(N)$ of a largest set $\cA \subset [N]$ with only trivial solutions to equation $E$, where a solution $x_1, \dots, x_k \in \cA$ is called \defn{trivial} if all maximal subsets $T \subset [k]$ with $x_i = x_j$ for all $i, j \in T$ satisfy $\sum_{i \in T} a_i = 0$.
\end{itemize}
Clearly, $r_E(N) \le R_E(N)$. In \cref{sec:eqtographs}, we explain why these quantities are important for abundance, but in short we want to know which equations satisfy $R_E(N) = N^{1-o(1)}$. In this context, the genus of an equation is important.

\begin{definition}
    The \defn{genus} of equation $E$ is the largest integer $m$ such that there is a partition $\cT_1 \cup \dotsb \cup \cT_m$ of the set of indices $[k]$ into $m$ disjoint nonempty sets $\cT_j$ such that, for every $j$,
    \begin{equation*}
        \sum_{i \in \cT_j} a_i = 0.
    \end{equation*}
\end{definition}

Ruzsa proved the following.

\begin{theorem}
     For any equation $E$ of genus $m$ we have
     \begin{equation*}
         r_E(N) = \cO(N^{1/m}).
     \end{equation*}
\end{theorem}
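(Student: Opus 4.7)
The plan is to encode solutions of $E$ as coincidences of a single linear functional on $\cA^m$ and then apply Cauchy--Schwarz. Fix a genus partition $[k] = \cT_1 \sqcup \dotsb \sqcup \cT_m$ with $\sum_{i \in \cT_j} a_i = 0$ for each $j$; since no $a_i$ is zero, every block has $\abs{\cT_j} \ge 2$. Pick a representative $i_j \in \cT_j$, set $c_j \coloneqq a_{i_j} \ne 0$, and define $\Phi \colon \cA^m \to \bZ$ by $\Phi(y_1, \dots, y_m) = \sum_j c_j y_j$; its image lies in an interval of length $\cO(N)$.

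For any ordered pair of tuples $(y, z) \in \cA^m \times \cA^m$, I would assemble $x \in \cA^k$ by setting $x_{i_j} = z_j$ for every $j$ and $x_i = y_j$ for every $i \in \cT_j \setminus \{i_j\}$. Using $\sum_{i \in \cT_j} a_i = 0$, a short calculation gives $\sum_{i \in \cT_j} a_i x_i = c_j(z_j - y_j)$, so $x$ is a solution to $E$ precisely when $\Phi(y) = \Phi(z)$. Let $S \subset \cA^m$ be the tuples with pairwise distinct entries, so $\abs{S} \ge (\abs{\cA}/2)^m$ once $\abs{\cA} \ge 2m$ (otherwise the bound is immediate). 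Since $\Phi|_S$ takes at most $\cO(N)$ values, Cauchy--Schwarz yields at least $\abs{S}^2 / \cO(N)$ pairs $(y,z) \in S \times S$ with $\Phi(y) = \Phi(z)$.

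The main step, and really the only delicate point, is to bound the number of such pairs for which the associated $x$ is \emph{trivial}. For $(y,z) \in S \times S$, each value appearing in $x$ equals at most one $z_{j'}$ (contributing $c_{j'}$ to that value's coefficient sum) and at most one $y_j$ (contributing $-c_j$). A value that appears on only one side therefore forms a block with coefficient sum $\pm c_\ell \ne 0$, contradicting triviality. Hence triviality forces a bijection $\pi \colon [m] \to [m]$ with $y_j = z_{\pi(j)}$ and $c_j = c_{\pi(j)}$ for all $j$. Since there are at most $m!$ such $\pi$, and each determines $z$ from $y$, there are at most $m! \cdot \abs{S}$ trivial pairs. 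Combining this with the Cauchy--Schwarz lower bound and the assumption that $\cA$ admits no non-trivial solutions gives $\abs{S}^2 / \cO(N) \le m! \cdot \abs{S}$, so $\abs{\cA}^m = \cO(N)$ and $r_E(N) = \cO(N^{1/m})$.
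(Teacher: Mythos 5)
The paper states this theorem without proof, citing it directly to Ruzsa~\cite{Ruzsa}, so there is no in-paper argument for me to compare against; I am evaluating your proof on its own merits. Your argument is correct and self-contained. The encoding via the functional $\Phi(y) = \sum_j c_j y_j$ built from the genus partition, the Cauchy--Schwarz lower bound $\abs{S}^2/\cO(N)$ on the number of pairs $(y,z) \in S \times S$ with $\Phi(y) = \Phi(z)$, and the identity $\sum_i a_i x_i = \Phi(z) - \Phi(y)$ for the assembled tuple $x$ all check out. The delicate point you flag is handled correctly: since $y, z$ have pairwise distinct entries, the equivalence class (under $x_i = x_{i'}$) of a value $v$ appearing in $x$ is exactly $\{i_{j'}\}$ if $v = z_{j'}$ only (coefficient sum $c_{j'} \neq 0$), exactly $\cT_j \setminus \{i_j\}$ if $v = y_j$ only (coefficient sum $-c_j \neq 0$), or their disjoint union if $v = z_{j'} = y_j$ (coefficient sum $c_{j'} - c_j$), so triviality forces every $y_j$ to be matched with a unique $z_{\pi(j)}$ satisfying $c_j = c_{\pi(j)}$, giving the $m! \cdot \abs{S}$ bound on trivial pairs. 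Combining with the supersaturation lower bound yields $\abs{S} = \cO(N)$ and hence $\abs{\cA} = \cO(N^{1/m})$ as required. This is, in spirit, the standard route to Ruzsa's bound (genus partition $\to$ linear functional $\to$ Cauchy--Schwarz collision count), cleanly executed.
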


Unfortunately, he did not prove the same result for $R_E(N)$. If the equation is \defn{symmetric}, which means that it can be written as
\begin{equation*}
    \sum_{i \in [\ell]} a_i x_i = \sum_{i \in [\ell]} a_i x_{\ell+i},
\end{equation*}
then $r_E(N) = \cO(N^{1/\ell})$, since symmetric equations in $2 \ell$ variables have genus $\ell$. Moreover, Ruzsa showed the following.

\begin{theorem}
    Let $\ell \geq 2$. For a symmetric equation $E$ in $2 \ell$ variables we have
    \begin{equation*}
        R_E(N) = \cO(\sqrt{N}).
    \end{equation*}
\end{theorem}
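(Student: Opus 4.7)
The plan is to extend the classical Sidon argument (which is the case $\ell = 2$) via direct double counting. Write the equation as $\sum_{i=1}^\ell a_i x_i = \sum_{i=1}^\ell a_i x_{\ell+i}$, set $C_0 \coloneqq \sum_{i=1}^\ell \abs{a_i}$, and suppose $\cA \subset [N]$ has no solution to $E$ with $2\ell$ pairwise distinct coordinates. Consider the weighted-sum map $f \colon \cA^\ell \to \bZ$ defined by $f(y_1, \dots, y_\ell) = \sum_{i=1}^\ell a_i y_i$; its image is contained in an interval of length at most $2 C_0 N + 1$.

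Let $T \subset \cA^\ell$ be the set of $\ell$-tuples with pairwise distinct coordinates, so that $\abs{T} \ge \abs{\cA}^\ell / 2$ whenever $\abs{\cA} \ge \ell^2$. By Cauchy--Schwarz, the number of ordered \emph{collisions}---pairs $(T_1, T_2) \in T \times T$ with $f(T_1) = f(T_2)$---is at least $\abs{T}^2/(2C_0 N + 1) \ge c_1 \abs{\cA}^{2\ell}/N$ for a positive constant $c_1 = c_1(E)$. Call a collision \emph{good} if the $2\ell$ coordinates $T_1[1], \dots, T_1[\ell], T_2[1], \dots, T_2[\ell]$ are pairwise distinct, and \emph{bad} otherwise. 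A good collision gives a solution to $E$ in $2\ell$ distinct elements of $\cA$, which is forbidden; hence every collision must be bad.

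It therefore suffices to bound the number of bad collisions. For each pair $(i, j) \in [\ell]^2$, the equations $T_1[i] = T_2[j]$ and $f(T_1) = f(T_2)$ are linearly independent linear constraints on the $2\ell$ unknowns: indeed, picking any $k \in [\ell] \setminus \{i\}$ (which exists because $\ell \ge 2$), the coefficient of $T_1[k]$ is $0$ in the first equation but $a_k \ne 0$ in the second, so the second is not a scalar multiple of the first. One can then solve for $T_1[i]$ together with $T_2[k_0]$ for any $k_0 \in [\ell] \setminus \{j\}$ in terms of the remaining $2\ell - 2$ coordinates, bounding the number of $(T_1, T_2) \in \cA^\ell \times \cA^\ell$ satisfying both equations by $\abs{\cA}^{2\ell - 2}$. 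Summing over the $\ell^2$ choices of $(i, j)$ gives at most $\ell^2 \abs{\cA}^{2\ell - 2}$ bad collisions in total. Combining with the lower bound above yields $c_1 \abs{\cA}^{2\ell}/N \le \ell^2 \abs{\cA}^{2\ell - 2}$, hence $\abs{\cA} \le C \sqrt{N}$ for some $C = C(E)$.

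The main obstacle is not any single step but the care needed to respect the definition of $R_E(N)$: one must conclude not merely that $T_1 \ne T_2$ (which already suffices in the classical Sidon case $\ell = 2$ when one counts unordered pairs) but that all $2\ell$ coordinates are pairwise distinct. This is precisely what the bad-collision bound is calibrated for---it is smaller than the collision total by a factor of order $\abs{\cA}^2/N$, forcing $\abs{\cA}^2 = \cO(N)$, and the hypothesis $\ell \ge 2$ enters exactly where we need an ``other'' index to witness linear independence of the two constraints. The same calculation in fact shows that almost every collision is good once $\abs{\cA}$ exceeds the threshold $\sqrt{N}$ by a constant factor, which is what one would exploit to obtain a supersaturation statement with many distinct solutions.
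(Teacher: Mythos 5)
Your proof is correct. The key points all check out: the image of $f$ lies in an interval of length at most $2C_0N+1$; Cauchy--Schwarz gives at least $\abs{T}^2/(2C_0N+1)$ ordered collisions over $T\times T$; the union over $(i,j)\in[\ell]^2$ bounds bad collisions by $\ell^2\abs{\cA}^{2\ell-2}$, and the $2\times 2$ minor formed by the columns $T_1[i]$ and $T_2[k_0]$ of the constraint pair $\{T_1[i]=T_2[j],\ f(T_1)=f(T_2)\}$ has determinant $-a_{k_0}\ne 0$, so those two variables are indeed determined by the remaining $2\ell-2$; and the hypothesis $\ell\ge 2$ is used exactly where you invoke it. Comparing with the paper: this theorem is stated there as a cited result of Ruzsa, not reproved, and your Sidon-type collision argument is essentially Ruzsa's original proof. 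The paper's own contribution is the more general Theorem~\ref{thm:genus2sqrt} (arbitrary equations of genus at least two), and its proof is a genuinely different technique: it builds an auxiliary $(s+t-1)$-partite graph on scaled sumsets $a_1\cA+\sum_j b_j\cA,\dots$, lets each pair $(x,y)\in\cA^2$ contribute an edge-disjoint traversal path, prunes to a subgraph of large minimum degree, and greedily constructs a path whose labels give a solution with all distinct variables. That argument does not need the two-sided matched structure $\sum a_ix_i=\sum a_ix_{\ell+i}$, while your double-counting uses it to split the variables into two halves and compare the weighted-sum map; so your proof is narrower but more elementary, and the paper's is more flexible (handling any genus-$\ge 2$ equation, not just symmetric ones).
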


Ruzsa also showed that this bound on $R_E(N)$ is the best possible \emph{general} bound for symmetric equations. Bukh~\cite{Bukh} later improved this bound for individual symmetric equations.

Based on these results, Ruzsa~\cite[\S~9]{Ruzsa} asked whether $r_E(N) \ge N^{1/m-o(1)}$ holds for all equations of genus $m$. In that case, equations of genus one would satisfy $R_E(N) \ge r_E(N) \ge N^{1-o(1)}$. However, so far, the only equations known to have $R_E(N) = N^{1-o(1)}$ are \defn{convex} equations, which are equations with exactly one negative coefficient. The corresponding set $\cA$ can then be obtained by adapting Behrend's construction~\cite{Behrend} of a large set with only trivial arithmetic progressions.

Note that according to the above results, it was still possible that some nonsymmetric equation of genus at least two satisfied $R_E(N) \ge N^{1-o(1)}$. We make progress on this by proving that in fact $R_E(N) = \cO(\sqrt{N})$ holds for all equations of genus at least two; this is \cref{thm:genus2sqrt}.

We first sketch the idea of this proof for the equation
\begin{equation*}
    x_1 - x_2 + y_1 + y_2 - 2 y_3 = 0.
\end{equation*}
Assume that $\cA \subset [N]$ is a subset of size $\abs{\cA} \ge C \sqrt{N}$. To find a solution to the above equation with distinct integers from $\cA$, we will construct an auxilliary graph. This will be a 4-partite graph whose parts are $\cA$, $\cA + \cA$, $\cA + 2 \cA$, and $2 \cA$, and for all $x, y \in \cA$ we will add the edges of the following path to our graph:
\begin{equation*}
    x, \quad x + y, \quad x + 2 y, \quad 2 y.
\end{equation*}
In total, this adds $C^2 N$ many paths and edges to the graph. So, if $C$ is sufficiently large, the average degree of this graph will be larger than any constant that we want, and we may pass to a subgraph where the minimum degree is larger than any constant that we want.

Now, by construction, any path across the four parts of this subgraph will be of the form
\begin{equation*}
    x_1, \quad x_1 + y_1, \quad x_1 + y_1 + y_2, \quad x_1 - x_2 + y_1 + y_2
\end{equation*}
where $x_1, x_2, y_1, y_2 \in \cA$. Moreover, since the vertex $x_1 - x_2 + y_1 + y_2$ is in the last part of the graph, we must have $x_1 - x_2 + y_1 + y_2 \in 2 \cA$ and so $x_1 - x_2 + y_1 + y_2 = 2 y_3$ for some $y_3 \in \cA$. But this means that $x_1 - x_2 + y_1 + y_2 - 2 y_3 = 0$, which implies that such a path corresponds to a solution of the equation with integers from $\cA$. It only remains to ensure that all integers are distinct. But this is not difficult: if we choose the path in the graph incrementally, every outgoing edge of a vertex corresponds to different value for one of the variables. Since the degree of every vertex is large, this means we can simply choose an edge where the value of the new variable is different from the values of all previous variables, which then yields a solution to the equation with distinct integers from $\cA$.
This concludes the sketch. 
In the general case, the proof is simply a generalisation of this argument with a few more details added.

\begin{remark}
    This argument is essentially a translation of the proof of \cref{lem:glue_blowup_abundant} to the additive setting. 
    One could view it as replacing an edge of a $C_4$ by path on $3$ vertices.
    The essential difference is that the additive structure allows one to find the optimal number of edges to be replaced by disjoint paths.
\end{remark}

\begin{theorem}
    Let $E$ be an equation of genus at least two. Then, $R_E(N) = \cO(\sqrt{N})$.
\end{theorem}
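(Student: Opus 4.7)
The plan is to generalise the argument sketched before the theorem for the equation $x_1 - x_2 + y_1 + y_2 - 2y_3 = 0$. Using that the genus is at least $2$, first partition $[k] = \cT_1 \cup \cT_2$ with $\sum_{i \in \cT_j} a_i = 0$ for $j = 1, 2$. Reorder the variables as $z_1, \dots, z_k$ with coefficients $\beta_1, \dots, \beta_k$ (the ordering will be chosen carefully below), and let $S_r, T_r$ denote the cumulative $\cT_1$- and $\cT_2$-coefficients after the first $r$ variables. Build an auxiliary $(k-1)$-partite graph $H$ with parts $V_r = \{S_r x + T_r y : x, y \in \cA\}$ for $r = 1, \dots, k-1$. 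For each $(x, y) \in \cA^2$ add to $H$ the path $v_1(x,y), \dots, v_{k-1}(x,y)$ with $v_r(x,y) = S_r x + T_r y$; this is the highly degenerate ``trivial walk'' corresponding to the solution in which every $\cT_1$-variable equals $x$ and every $\cT_2$-variable equals $y$.

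The key structural claim is that \emph{any} path $v_1 v_2 \cdots v_{k-1}$ visiting all parts of $H$ encodes a solution to $E$ with entries in $\cA$: one reads off $z_1 = v_1/\beta_1$, $z_r = (v_r - v_{r-1})/\beta_r$ for $r = 2, \dots, k-1$, and $z_k = -v_{k-1}/\beta_k$, each of which lies in $\cA$ by the definition of $V_1$, the edge constraints, and the identity $V_{k-1} = -\beta_k \cdot \cA$ (which will hold provided the ordering places the final variable at position $k$). The walk then telescopes to $\sum_q \beta_q z_q = 0$. For the graph to be rich in edges, the map $(x, y) \mapsto (v_{r-1}(x, y), v_r(x, y))$ must be injective for each $r$, which is equivalent to the determinant $S_{r-1} T_r - S_r T_{r-1}$ being non-zero. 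If this holds at every layer, then each of the $k-2$ bipartite layers of $H$ contributes at least $\abs{\cA}^2 \ge C^2 N$ distinct edges when $\abs{\cA} \ge C\sqrt{N}$, while each $V_r$ has size $O_E(N)$; hence $H$ has average degree $\Omega_E(C^2)$.

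The main obstacle is choosing the variable ordering so that every edge map is injective. A short calculation shows that injectivity at edge $r$ is equivalent to $T_{r-1} \neq 0$ when position $r$ is a $\cT_1$-variable (and $S_{r-1} \neq 0$ when it is a $\cT_2$-variable). I would achieve this by first ordering each block so that all its proper partial sums are non-zero (for example by placing positive coefficients before negative ones within each block), and then interleaving the two blocks so that neither block is exhausted until position $k$; a short case analysis on $(\abs{\cT_1}, \abs{\cT_2})$ (which are both at least $2$ since every coefficient is non-zero) shows that such an interleaving always exists, for instance by alternating blocks for $\abs{\cT_1} - 1$ rounds, then finishing the larger block, and finally placing the last variable of the other block at position $k$. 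With the ordering fixed, I would pass to a subgraph of $H$ with minimum degree $\Omega_E(C^2)$ by iteratively deleting low-degree vertices, and greedily build a path through all $k-1$ parts: at each step at most $O(k^2)$ of the available neighbours would force a new variable to equal a previously fixed one, so choosing $C$ sufficiently large as a function of $k$ makes the greedy extension succeed. The resulting path yields a solution to $E$ with $k$ distinct entries in $\cA$, proving $R_E(N) = \cO(\sqrt{N})$.
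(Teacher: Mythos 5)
Your plan is correct and is essentially the paper's argument: build a $(k-1)$-partite auxiliary graph whose transversal paths encode solutions, use edge-disjointness of the ``trivial'' paths to pass to a high-minimum-degree subgraph, and greedily extend a path while keeping the read-off values distinct. You also correctly identify that the whole thing hinges on each bipartite layer's edge map $(x,y)\mapsto(v_{r-1},v_r)$ being injective, and your determinant criterion ($T_{r-1}\neq 0$ at a $\cT_1$-position, $S_{r-1}\neq 0$ at a $\cT_2$-position) is right.

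Where you diverge from the paper is in how you guarantee that criterion. You sort each block so that all proper prefix sums are nonzero (the positives-before-negatives observation does achieve this) and then interleave the two blocks. The paper instead uses the ordering $x_1,\, y_1,\dots,y_{t-1},\, x_2,\dots,x_s,\, y_t$, under which the only partial sums that ever arise are $S_{r-1}=a_1$ (for the first stretch) and $T_{r-1}=b_1+\dots+b_{t-1}=-b_t$ (for the second), both nonzero for free; no within-block reordering is needed. This removes your ``nonzero proper partial sums'' step and the interleaving case analysis entirely. Also note a small asymmetry bug in your stated interleaving: alternating for $\abs{\cT_1}-1$ rounds only works when $\abs{\cT_1}\leq\abs{\cT_2}$, so you should alternate for $\min(\abs{\cT_1},\abs{\cT_2})-1$ rounds; this is the sort of thing the paper's ordering sidesteps. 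Finally, when passing to a dense subgraph, the relevant quantity is degree into each adjacent layer rather than total degree; the paper handles this by counting paths through each vertex rather than edges, which makes the minimum-degree claim immediate.
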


\begin{proof}
    Since $E$ is an equation of genus at least two, we may write $E$ as
    \begin{equation*}
        \sum_{i=1}^s a_i x_i + \sum_{j=1}^t b_j y_j = 0
    \end{equation*}
    where $\sum_{i=1}^s a_i = \sum_{j=1}^t b_j = 0$. Let $\cA \subset [N]$ and write $M = \abs{\cA}$. We want to show that if $M > C \sqrt{N}$ for some sufficiently large constant $C$ depending only on $E$, then there exists a solution to $E$ with distinct integers from $\cA$.

    For this, we define an auxiliary $(s+t-1)$-partite graph $G$ as follows. Its vertex sets $V_1, \dots, V_{s+t-1}$ are defined as
    \begin{equation*}
        V_k = a_1 \cA + \sum_{j=1}^{k-1} b_j \cA \text{ for } k \in [t-1] \quad \text{ and } \quad V_{k+t-1} = \sum_{i=k+1}^s (-a_i \cA) - b_t \cA \text{ for } k \in [s].
    \end{equation*}
    For every ordered pair $(x, y)$ of distinct $x, y \in \cA$, we add a $V_1 V_2 \dots V_{s+t-1}$ path to $G$ whose vertices $v_1 v_2 \dots v_{s+t-1}$ are given by
    \begin{equation*}
        v_k = a_1 x + \sum_{j=1}^{k-1} b_j y \text{ for } k \in [t-1] \quad \text{ and } \quad v_{k+t-1} = \sum_{i=k+1}^s (-a_i x) - b_t y \text{ for } k \in [s],
    \end{equation*}
    as shown in \cref{fig:path}.
    
    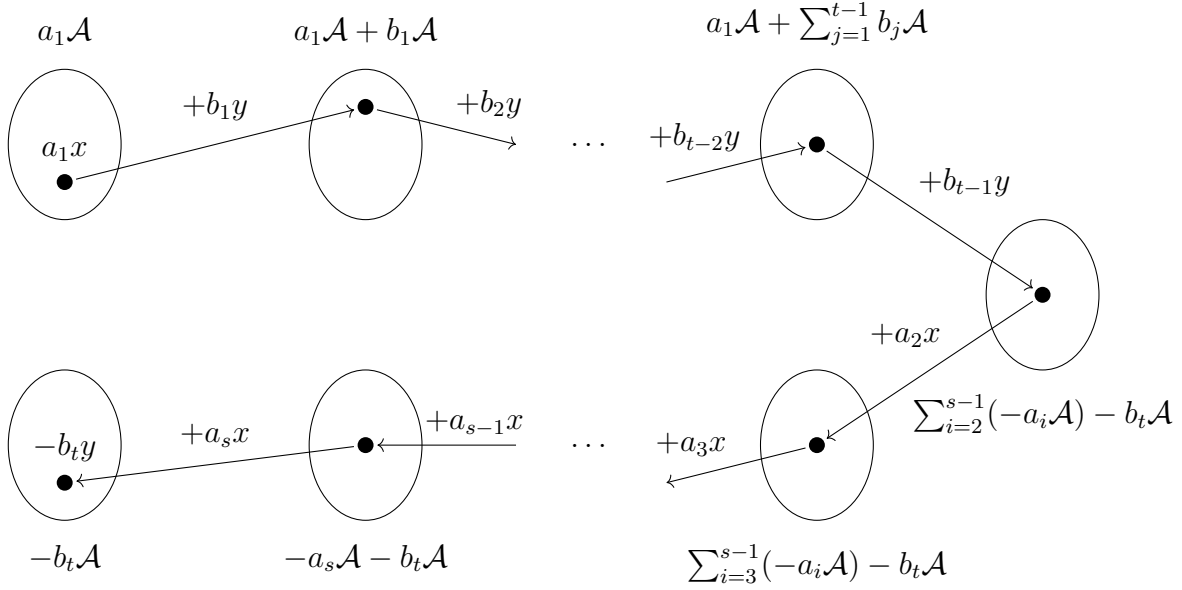
\begin{figure}[ht!]
        \centering
        \begin{tikzpicture}[set/.style={draw,ellipse,minimum height=2cm,minimum width=1.5cm,outer sep=0.2cm},every node/.style={inner sep=0.075cm,outer sep=0.05cm}]
            \node[set,label={above:$a_1 \cA$}] at (0,0) {};
            \node[set,label={above:$a_1 \cA + b_1 \cA$}] at (4,0) {};
            \node[set,label={above:$a_1 \cA + \sum_{j=1}^{t-1} b_j \cA$}] at (10,0) {};
            \node[set,label={below:$\sum_{i=2}^{s-1} (-a_i \cA) - b_t \cA$}] at (13,-2) {};
            \node[set,label={below:$\sum_{i=3}^{s-1} (-a_i \cA) - b_t \cA$}] at (10,-4) {};
            \node[set,label={below:$-a_s \cA - b_t \cA$}] at (4,-4) {};
            \node[set,label={below:$-b_t \cA$}] at (0,-4) {};
            \node at (7,0) {$\dots$};
            \node at (7,-4) {$\dots$};
            \node[fill,circle,label=above:$a_1 x$] (S1) at (0,-0.5) {};
            \node[fill,circle] (S2) at (4,0.5) {};
            \node[fill,circle] (S3) at (10,0) {};
            \node[fill,circle] (S4) at (13,-2) {};
            \node[fill,circle] (S5) at (10,-4) {};
            \node[fill,circle] (S6) at (4,-4) {};
            \node[fill,circle,label=above:$-b_t y$] (S7) at (0,-4.5) {};
            \draw[->] (S1) edge["$+ b_1 y$"above=0.2] (S2) (S2) edge["$+ b_2 y$"] (6,0) (8,-0.5) edge["$+ b_{t-2} y$"pos=0.6] (S3) (S3) edge["$+ b_{t-1} y$"pos=0.4] (S4) (S4) edge["$+ a_2 x$"'pos=0.4] (S5) (S5) edge["$+ a_3 x$"'] (8,-4.5) (6,-4) to["$+ a_{s-1} x$"'pos=0.3] (S6) (S6) edge["$+ a_s x$"above=0.1] (S7);
        \end{tikzpicture}
        \caption{The path for the ordered pair $(x, y)$ of distinct $x, y \in \cA$.} \label{fig:path}
    \end{figure}

    We observe that $v_1 = a_1 x$, $v_{k+1} = v_k + b_k y$ for $k \in [t-1]$, and $v_{k+t} = v_{k+t-1} + a_{k+1} x$ for $k \in [s-1]$. This is obvious except for $v_t = v_{t-1} + b_{t-1} y$, but there we have
    \begin{equation*}
        v_t = \sum_{i=2}^s (-a_i x) - b_t y = a_1 x + \sum_{j=2}^{t-1} b_j y = v_{t-1} + b_{t-1} y,
    \end{equation*}
    where we used $\sum_{i=1}^s a_i = \sum_{j=1}^t b_j = 0$. In particular, the difference between the two endpoints of any edge in this path uniquely determines either $x$ or $y$. Moreover, once we have one of these two values, we can determine the other value by looking at either endpoint of the edge. So, from any edge we can determine both $x$ and $y$, which implies that all of these paths are edge-disjoint. We denote the collection of these paths by $\cP$, so we have $\abs{\cP} = M (M-1) \ge M^2 / 2$.

    We claim that any $V_1 V_2 \dots V_{s+t-1}$ path in $G$ yields a solution to the equation $E$. Indeed, if $u_1 u_2 \dots u_{s+t-1}$ is such a path, then by the observation from above there are $x_i, y_j \in \cA$ with $u_1 = a_1 x_1$, $u_{k+1} = u_k + b_k y_k$ for $k \in [t-1]$, and $u_{k+t} = u_{k+t-1} + a_{k+1} x_{k+1}$ for $k \in [s-1]$. Moreover, $u_{s+t-1} = -b_t y_t$ for some $y_t \in \cA$ by definition of $V_{s+t-1}$. Therefore,
    \begin{equation*}
        \sum_{i=1}^s a_i x_i + \sum_{j=1}^t b_j y_j = a_1 x_1 + \sum_{j=1}^{t-1} b_j y_j + \sum_{i=2}^s a_i x_i + b_t y_t = u_{s+t-1} + b_t y_t = 0,
    \end{equation*}
    as claimed. It remains to show that there exists such a path where the corresponding variables are distinct.

    The graph $G$ has at most $n = c N$ many vertices where $c = (s+t-1) (\sum_{i=1}^s \abs{a_i} + \sum_{j=1}^t \abs{b_j})$. We pass to a subgraph $H$ of $G$ as follows. While there exists a vertex $v$ of $G$ that is contained in less than $\abs{\cP} / 2 n$ many paths of $\cP$, we delete all paths containing $v$. Since this deletes at most $n \cdot (\abs{\cP} / 2 n) = \abs{\cP} / 2$ many paths, we are left with at least $\abs{\cP} / 2$ many paths $\cP' \subset \cP$ and every vertex is contained in either none or at least $\abs{\cP} / 2 n$ many of these paths. Let $H$ be the (non-empty) subgraph of $G$ induced by the vertices in $\cP'$. In particular, every vertex $v \in V(H) \cap V_k$ for $k \in [s+t-2]$ has at least $\abs{\cP} / 2 n \ge M^2 / 4 c N \ge C^2 / 4 c$ many neighbours in $V(H) \cap V_{k+1}$. If $C$ is sufficiently large, this is at least $2 (s+t)$.

    We now construct the path $u_1 u_2 \dots u_{s+t-1}$. We start with any vertex $u_1 = a_1 x_1 \in V(H) \cap V_1$. Then, for $k \in [t-1]$, we pick a neighbour $u_{k+1} = u_k + b_k y_k \in V(H) \cap V_{k+1}$ of $u_k$ such that $y_k$ is distinct from $x_1, y_1, \dots, y_{k-1}$. Since $u_k$ has more than $k$ neighbours in $V(H) \cap V_{k+1}$ and every neighbour corresponds to a different value of $y_k$, this is always possible. Afterwards, for $k \in [s-2]$, we pick a neighbour $u_{k+t} = u_{k+t-1} + a_{k+1} x_{k+1} \in V(H) \cap V_{k+t}$ of $u_{k+t-1}$ such that $x_{k+1}$ is distinct from $x_1, \dots, x_k, y_1, \dots, y_{t-1}$. Again, $u_{k+t-1}$ has more than $k+t-1$ neighbours in $V(H) \cap V_{k+t}$, so this is possible. Finally, we pick a neighbour $u_{s+t-1} = u_{s+t-2} + a_s x_s = -b_t y_t \in V(H) \cap V_{s+t-1}$ of $u_{s+t-2}$ such that $x_s$ and $y_t$ are both distinct from $x_1, \dots, x_{s-1}, y_1, \dots, y_{t-1}$. As $u_{s+t-2}$ has more than $2 (s+t-2)$ many neighbours in $V(H) \cap V_{s+t-1}$, this is possible. Also, $x_s$ and $y_t$ are distinct: this is true for all edges between $V_{s+t-2}$ and $V_{s+t-1}$ in $G$ since all paths in $\cP$ were constructed with $x$ and $y$ distinct. Thus, $x_1, \dots, x_s, y_1, \dots, y_t$ are all distinct, and we know from above that
    \begin{equation*}
        \sum_{i=1}^s a_i x_i + \sum_{j=1}^t b_j y_j = 0. \qedhere
    \end{equation*}
\end{proof}

In fact, the above proof immediately gives something stronger.
We say an equation $E$ in $\ell$ variables is \defn{abundant} if there exists constants  $\gamma,C>0$, depending only on $E$, such that for all $\eps \in (0,1)$ and $N\in \mathbb{N}$, any subset $\cA\subset [N]$ of order at least $\eps N$ contains at least $\gamma\eps^C N^{\ell-1}$ solutions to $E$. 
Note that abundance implies unavoidability. 
The reverse implication is an interesting open question (see \cref{sec:questions}).

\begin{corollary}\label{cor:abundance}
    All equations of genus at least $2$ are abundant. 
\end{corollary}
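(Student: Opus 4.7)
The plan is to closely parallel the proof of \cref{thm:genus2sqrt}, the only change being that we count the solutions produced by the argument rather than settling for a single one. Write $E$ as $\sum_{i=1}^s a_i x_i + \sum_{j=1}^t b_j y_j = 0$ with $\ell = s + t$, and suppose $\cA \subset [N]$ has size $M \ge \eps N$. I would construct the same $(s+t-1)$-partite auxiliary graph $G$ and collection $\cP$ of $M(M-1) \ge \eps^2 N^2/2$ edge-disjoint $V_1 V_2 \cdots V_{s+t-1}$ paths, and then prune to the subgraph $H$ exactly as before. The pruning guarantees that $\abs{\cP'} = \Omega(\eps^2 N^2)$, that $\abs{V(H) \cap V_1} \ge \abs{\cP'}/(M-1) = \Omega(\eps N)$ (since each vertex of $V_1$ lies in at most $M-1$ paths of $\cP$), and that each $v \in V(H) \cap V_k$ with $k \le s+t-2$ has at least $\abs{\cP}/(2n) = \Omega(\eps^2 N)$ forward neighbours in $V(H) \cap V_{k+1}$.

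The heart of the argument would be a greedy count of $V_1 V_2 \cdots V_{s+t-1}$ paths in $H$ for which the associated variables $x_1, \dotsc, x_s, y_1, \dotsc, y_t$ are all distinct. Starting from any of the $\Omega(\eps N)$ vertices in $V(H) \cap V_1$ and extending step by step, at each extension there are $\Omega(\eps^2 N)$ forward neighbours; only a constant number (depending on $E$) of them would re-use a variable value already fixed by earlier choices. Indeed, at an intermediate step the new variable is determined by the edge traversed and each previously fixed variable excludes at most one choice; at the final step both $x_s$ and $y_t$ are determined (and are automatically distinct, by the construction of $\cP$), so the number of forbidden choices at most doubles. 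Once $\eps^2 N$ exceeds a threshold depending only on $E$, a positive fraction of the forward neighbours is therefore admissible at every step, and multiplying through yields at least $\Omega(\eps N) \cdot \Omega(\eps^2 N)^{\ell-2} = \Omega(\eps^{2\ell - 3} N^{\ell - 1})$ admissible paths.

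Each admissible path induces a distinct ordered solution: the proof of \cref{thm:genus2sqrt} already shows that the vertices of the path determine, and are determined by, the tuple $(x_1, \dotsc, x_s, y_1, \dotsc, y_t)$. Hence $\cA$ contains $\Omega(\eps^{2\ell - 3} N^{\ell - 1})$ solutions to $E$ in distinct integers, giving abundance with exponent $C = 2\ell - 3$. In the residual regime where $\eps^2 N$ is below the threshold above, $\eps^{2\ell-3} N^{\ell - 1}$ is itself bounded by a constant depending only on $E$, and the abundance inequality becomes vacuous on choosing $\gamma$ sufficiently small. No serious obstacle arises here: the quantitative content is already present in the proof of \cref{thm:genus2sqrt}; one merely tracks how many valid paths survive the greedy extension at each step rather than producing a single one.
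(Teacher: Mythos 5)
Your approach is essentially the paper's: both construct the same auxiliary $(\ell-1)$-partite path graph from the proof of \cref{thm:genus2sqrt}, prune to a subgraph $H$ of large minimum forward/backward degree, and count $V_1\cdots V_{\ell-1}$ paths as ordered solutions. The paper is content to count \emph{all} such paths, since the definition of an abundant equation does not require the integers in a solution to be distinct: it simply multiplies the minimum-degree bound $\ge \eps^2 N/4c$ through $\ell-1$ levels and stops, obtaining exponent $C = 2(\ell-1)$. You go further and enforce distinctness of the variables by greedy extension, which yields the strictly stronger conclusion of many solutions in \emph{distinct} integers, and incidentally a slightly better exponent $C = 2\ell-3$ thanks to your sharper bound $\abs{V(H)\cap V_1} = \Omega(\eps N)$. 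The greedy extension itself is correct: the only subtlety is that the final edge pins down both $x_s$ and $y_t$, and you have addressed that.

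The one genuine flaw is the residual regime. The claim that ``$\eps^{2\ell-3}N^{\ell-1}$ is itself bounded by a constant depending only on $E$'' when $\eps^2 N$ is below a threshold $K$ is false: since $\eps^{2\ell-3}N^{\ell-1} = \eps^{-1}(\eps^2 N)^{\ell-1}$, the quantity is at most $\eps^{-1}K^{\ell-1}$, which is unbounded as $\eps \to 0$ (for example, $\eps = N^{-1/2}$ gives $\eps^{2\ell-3}N^{\ell-1} = N^{1/2}$ while $\eps^2 N = 1$). The fix is easy but not the one you state. Because $E$ is translation invariant, every $a\in\cA$ gives a trivial solution $x_1 = \dots = y_t = a$, so there are always at least $M \ge \eps N$ solutions; and in the residual regime $\eps^{2\ell-3}N^{\ell-1} = (\eps^2 N)^{\ell-2}\cdot \eps N < K^{\ell-2}\eps N$, so choosing $\gamma < K^{2-\ell}$ makes the required count no larger than the count of trivial solutions. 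Alternatively — and this is what the paper does — drop the distinctness requirement, since the definition does not need it: then there is no threshold and the minimum-degree count gives $(\eps^2 N/4c)^{\ell-1}$ paths uniformly in all regimes. If you do want the distinct-integer form of the conclusion, note that it genuinely fails for very small $\eps^2 N$ (one cannot expect distinct solutions from very small sets), so one must either accept the weaker non-distinct version in that range as above, or assert the bound only for $\eps^2 N$ above the threshold.
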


\begin{proof}
    Suppose $\cA \subset [N]$ and $|\cA| = M = \eps N$, and let $E$ be an equation of genus at least $2$.
    Then we proceed as in the above proof to find an $(s+t-1) = (\ell - 1 )$-partite subgraph $(V_1,\dots,V_{\ell-1})$, with the property that each $v \in V_i$ has at least $M^2/4cN > \eps^2N/4c$ neighbours in $V_{i-1}$ and $V_{i+1}$. 
    Thus, $V_1$ has at least $\eps^2N/4c$ vertices and choosing the first vertex of the path arbitrarily and then extending arbitrarily yields $(\eps^2N/4c)^{\ell-1}$ paths via $V_1V_2\dots V_{\ell-1}$.
    As $c$ only depends on $E$, this completes the proof. 
\end{proof}

\section{From graphs to equations and back}
\label{sec:eqtographs}

Thus far we discussed abundance and the problem of integer solutions to equations largely in isolation. In reality, these two subjects are intimately connected. This has been known since at least the work of Ruzsa and Szemer\'{e}di~\cite{RS78} who used Behrend's construction~\cite{Behrend} of a large set with only trivial arithmetic progressions to show that triangles are not triangle-abundant. At the moment, the Ruzsa-Szemer\'{e}di construction is the only technique we have for showing that a graph $H$ with $H \to F$ is not $F$-abundant. So far, however, it remained possible that this is simply a convenient way of proving non-abundance and nothing more. In this section, we explore this connection in more depth and show that in some cases the avoidability of equations associated to $H$ can imply abundance and even characterise it.
We also show that a strategy for proving non-abundance suggested by Gishboliner, Shapira and Wigderson \cite{GSW23} cannot succeed, and that their random construction of a strongly genus one graph can be replaced by a small explicit graph. 

We begin by explaining the Ruzsa-Szemer\'{e}di construction in full generality. Fix an injection $c \colon V(F) \to [n]$. Then, for any integer $N$ and any subset $\cA \subset [N]$, the \defn{Ruzsa-Szemer\'edi graph} $\RS(F, N, \cA)$ is the $\abs{F}$-partite graph with vertex sets $V_v = [n N]$ for $v \in V(F)$ whose edges between $V_u$ and $V_v$ for $u v \in E(F)$ are those pairs $x y \in V_u \times V_v$ that satisfy $y - x \in (c(v) - c(u)) \cA$.

It is easy to see that this graph contains many edge-disjoint copies of $F$.
Indeed, let $x \in [N]$ and $a \in \cA$ be arbitrary.
Then, for all $v\in V(F)$, let $x_v = x + (c(v)-1) a \in V_v$.
This set of vertices induces a copy of $F$, and every copy so generated is edge-disjoint. 
Therefore, this graph is far from $F$-free if $\abs{\cA}$ is sufficiently large. If we want to show that $H$ is not $F$-abundant, we need to ensure that this graph also contains few (not necessarily edge-disjoint) copies of $H$. 
For that, consider an $F$-colouring $\sigma$ of $H$ and associate to any cycle $v_1 \dots v_\ell$ of $H$ its \defn{cycle-equation}
\begin{equation*}
    (c(\sigma(v_2)) - c(\sigma(v_1))) x_{v_1 v_2} + \dots + (c(\sigma(v_\ell)) - c(\sigma(v_{\ell-1}))) x_{v_{\ell-1} v_\ell} + (c(\sigma(v_1)) - c(\sigma(v_\ell))) x_{v_\ell v_1} = 0.
\end{equation*}
We denote this system of equations by $\Eq(H, \sigma, c)$. In every embedding of $(H,\sigma)$ in the Ruzsa-Szemer\'edi graph, the embedding of a cycle corresponds to a solution of its cycle-equation with integers from $\cA$.
So, if $\cA$ has few solutions to $\Eq(H, \sigma, c)$, the graph contains few copies of $(H,\sigma)$.

With some additional work, it can  be shown that the Ruzsa-Szemer\'{e}di construction implies: 
\begin{quote}
      If $(H,\sigma)$ is $F$-abundant, then $\cE(H,\sigma,c)$ is abundant.
\end{quote}
Conversely, if $\cE(H,\sigma,c)$ is not abundant, this shows that $(H,\sigma)$ is not $F$-abundant. Thus, \cref{cor:abundance} is directly relevant to our investigations of graph abundance. It implies that a single equation of genus at least two does not suffice for the Ruzsa-Szemer\'edi construction.

\subsection{From graphs to equations}

Ruzsa's question, \cref{qu:Ruzsa}, asks whether all equations of genus one satisfy $R_E(N) = N^{1-o(1)}$. Under the assumption that the answer is yes, Gishboliner, Shapira, and Wigderson~\cite{GSW23} showed that every $K_3$-colouring of an appropriate triangle-free random graph $G$ has an associated system of equations which can be linearly combined to obtain an equation with genus one, implying that $G$ is not $K_3$-abundant (see \cite[Section 3.2]{GSW23}).

We explicitly construct a graph with this property. Let $G_n$ be the $3$-partite graph with vertex sets $A = [n]$, $B = 2^{[n]}$, and $C = 2^{[n]}$. Then, join $a \in A$ to exactly those $X \in B$ and $Y \in C$ that satisfy $a \in X$ and $a \in Y$. Moreover, join $X \in B$ to $Y \in C$ if and only if $X \cap Y = \emptyset$. It is easy to see that this graph is triangle-free. Moreover, it can be shown that each tripartite triangle-free graph is a subgraph of $G_n$ for $n$ sufficiently large. 
Thus, if any tripartite triangle-free graph is not $K_3$-abundant, then $G_n$ will not be $K_3$-abundant for some sufficiently large $n$.

In fact, a computer can check that the subgraph of $G_3$ depicted in \cref{fig:g3} has the property that every $K_3$-colouring has a linear combination with genus one. So, this subgraph (and therefore $G_3$) is not $K_3$-abundant if Ruzsa's question has a positive answer.

\begin{figure}[ht!]
    \centering
    \begin{tikzpicture}[every node/.style={circle,fill,inner sep=0.075cm,outer sep=0.05cm}]
        \node["$1$"above] (A1) at (-1,2) {};
        \node["$2$"above] (A2) at (0,2) {};
        \node["$3$"above] (A3) at (1,2) {};
        \node["$\set{1}$"below left] (B1) at (-4.5,-0.5) {};
        \node["$\set{2}$"below left] (B2) at (-3.9,-1.2) {};
        \node["$\set{3}$"below left] (B3) at (-3.3,-1.9) {};
        \node["$\set{1,2}$"below left] (B12) at (-2.7,-2.6) {};
        \node["$\set{1,3}$"below left] (B13) at (-2.1,-3.3) {};
        \node["$\set{2,3}$"below left] (B23) at (-1.5,-4) {};
        \node["$\set{1}$"below right] (C1) at (1.5,-4) {};
        \node["$\set{2}$"below right] (C2) at (2.1,-3.3) {};
        \node["$\set{3}$"below right] (C3) at (2.7,-2.6) {};
        \node["$\set{1,2}$"below right] (C12) at (3.3,-1.9) {};
        \node["$\set{1,3}$"below right] (C13) at (3.9,-1.2) {};
        \node["$\set{2,3}$"below right] (C23) at (4.5,-0.5) {};
        \draw (A1) edge (B1) edge (B12) edge (B13) (A2) edge (B2) edge (B12) edge (B23) (A3) edge (B3) edge (B13) edge (B23);
        \draw (A1) edge (C1) edge (C12) edge (C13) (A2) edge (C2) edge (C12) edge (C23) (A3) edge (C3) edge (C13) edge (C23);
        \draw (B1) edge (C2) edge (C3) edge (C23) (B2) edge (C1) edge (C3) edge (C13) (B3) edge (C1) edge (C2) edge (C12) (B12) edge (C3) (B13) edge (C2) (B23) edge (C1);
    \end{tikzpicture}
    \caption{A subgraph of $G_3$ obtained by removing the empty and complete sets.} \label{fig:g3}
\end{figure}
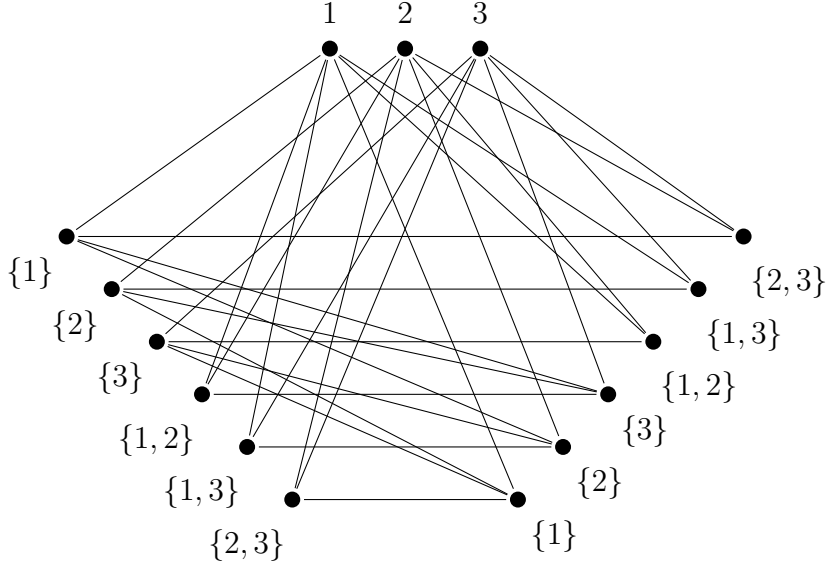

It would be interesting to prove that some triangle-free graph is not $K_3$-abundant without relying on the assumption that Ruzsa's question has a positive answer. Unfortunately, as mentioned in \cref{sec:additive}, the only equations known to have $R_E(N) = N^{1-o(1)}$ are convex equations, which are equations with exactly one negative coefficient. Gishboliner, Shapira, and Wigderson therefore suggested trying to find a triangle-free graph where every $K_3$-colouring has an associated system that can be linearly combined to give a convex equation. 
This would unconditionally prove that such a graph is not $K_3$-abundant. They tried to find such a graph with a computer search, but were unsuccessful.

We show that this cannot work. Indeed, we prove that if some linear combination of the equations from $\Eq(H, \sigma, c)$ is convex, then there is a single cycle that yields a convex equation. In $H$, the colours associated to such a cycle by $c \circ \sigma$ are in strictly increasing order, so the cycle has length at most $3$ and must therefore be a triangle. In general, this means that we can currently only show that a graph $H$ is not $F$-abundant if every $F$-colouring of $H$ has an increasing cycle, a property that is called \defn{increasing-cycle-unavoidable} (see \cite[Section 1.5]{GSW23}).

\begin{lemma}\label{lem:convex_not_possible}
    Let $(H, \sigma)$ be an $F$-coloured graph and $c \colon V(F) \to [n]$ be an injection. If some linear combination of the equations from $\Eq(H, \sigma, c)$ is convex, then there is a single cycle-equation of $H$ that is convex.
\end{lemma}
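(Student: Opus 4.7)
The plan is to reinterpret linear combinations of cycle-equations of $(H, \sigma)$ as real flows on an acyclic orientation of $H$, and then invoke flow decomposition. First I would orient every edge of $H$ from the endpoint of smaller $c \circ \sigma$ value to the one of larger value, obtaining an acyclic orientation $\vec H$. Writing $w(e) = |c(\sigma(u)) - c(\sigma(v))| > 0$ for each edge $e = uv$, the cycle-equation of any cycle $C$ of $H$ (traversed in some direction) can be rewritten as $\sum_e \epsilon_C(e) w(e) x_e = 0$, where $\epsilon_C(e) \in \set{-1,0,+1}$ records whether $e \in C$ and whether $C$'s traversal agrees with the orientation of $\vec H$. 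A routine check confirms that $\epsilon_C$ satisfies flow conservation at every vertex of $\vec H$, so the linear combination $\sum_C \alpha_C \cdot (\text{cycle-eqn of } C)$ corresponds to the real flow $f = \sum_C \alpha_C \epsilon_C$ on $\vec H$, with the coefficient of $x_e$ in that combination being exactly $f(e) w(e)$.

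Now suppose the combination is convex. After possibly negating, we may assume $f(e) \ge 0$ for all edges $e \ne e_0$ and $f(e_0) < 0$ for some unique edge $e_0$. I would then reverse the orientation of $e_0$ to obtain a new orientation $\vec H'$ and set $f'(e) = f(e)$ for $e \ne e_0$ and $f'(e_0) = -f(e_0) > 0$. A direct check at the two endpoints of $e_0$ shows $f'$ is a non-negative, non-trivial flow on $\vec H'$, so by the flow decomposition theorem it is a non-negative combination of directed cycles in $\vec H'$. Since $\vec H$ itself is acyclic, every directed cycle in $\vec H'$ must contain the reversed edge $e_0$.

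To conclude, pick any directed cycle $D$ in the decomposition of $f'$ and write $e_0$ as the arc $v \to u$ in $\vec H'$, so that $e_0$ is the arc $u \to v$ in $\vec H$. The remaining arcs of $D$ form a directed path $u = w_0 \to w_1 \to \dots \to w_k = v$ in $\vec H$; as $H$ is simple and this path avoids the edge $e_0$, we must have $k \ge 2$. Because the path lies in $\vec H$, we have $c(\sigma(u)) < c(\sigma(w_1)) < \dots < c(\sigma(v))$, and closing it up through $e_0$ produces a simple cycle of $H$ whose $c \circ \sigma$-values strictly increase along every edge except the single backward step $e_0$. The cycle-equation of this cycle therefore has exactly one negative coefficient, hence is convex. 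The main conceptual hurdle is setting up the correct flow interpretation and verifying conservation after reversing $e_0$; once that and flow decomposition are in hand, the acyclicity of $\vec H$ forces every directed cycle of $\vec H'$ to contain $e_0$, and the desired convex cycle essentially reads itself off.
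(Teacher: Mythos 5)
Your proof is correct, and it takes a cleaner, more modular route than the paper's while being built on the same underlying idea. The paper proves, for every vertex $v$, the identity
\[
S(v) \;=\; \sum_{u \in N(v)} \frac{a_{vu}}{c(\sigma(u)) - c(\sigma(v))} \;=\; 0,
\]
which is precisely your flow-conservation statement for $f(e) = a_e / w(e)$ on $\vec H$ once you track the sign of the denominator. It then hand-builds an increasing path starting at the tail of the negative edge, using a local claim (if nonnegative flow comes in from below, positive flow must leave upward) together with a maximality argument to force the path to close up at the head of the negative edge. That is in effect a bespoke proof of flow decomposition restricted to this instance. You instead set up the flow interpretation explicitly, reverse the unique negative arc $e_0$ to obtain a nonnegative circulation on $\vec H'$, invoke the flow decomposition theorem as a black box, and use acyclicity of $\vec H$ to force every directed cycle of $\vec H'$ through $e_0$; reading off such a cycle immediately gives the convex cycle-equation. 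Your version buys conceptual clarity and reusability (the flow framework makes it obvious why the sign structure propagates), at the small cost of importing a standard theorem rather than remaining self-contained; the paper's version is more elementary but re-derives flow decomposition in disguise. Two details worth making explicit if you write this up: (i) verifying that reversing $e_0$ and negating $f(e_0)$ preserves conservation at both endpoints of $e_0$ (a one-line check, as you note); and (ii) the observation that $k \ge 2$ follows from simplicity of $H$, so the resulting cycle genuinely has length at least three.
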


\begin{proof}
    Let
    \begin{equation*}
        \sum_{e \in E(H)} a_e x_e = 0
    \end{equation*}
    be a linear combination of the cycle-equations from $\Eq(H, \sigma, c)$ that is convex. Let $v \in V(H)$ be any vertex. Then, we claim that
    \begin{equation*}
        S(v) = \sum_{u \in N(v)} \frac{a_{v u}}{c(\sigma(u)) - c(\sigma(v))} = 0.
    \end{equation*}
    Indeed, this holds for any single cycle-equation: if $v$ appears in the cycle, the contribution to $S(v)$ of the two edges incident to $v$ in the cycle cancel each other out which means that $S(v)$ is zero as claimed. So this holds for a single cycle-equation, and therefore also for any linear combination of them.

    \begin{claim}\label{clm:posincidentedges}
        Let $v$ be such that $a_{v u} \ge 0$ for all $u \in N(v)$. Then, if some $u \in N(v)$ satisfies $c(\sigma(u)) < c(\sigma(v))$ and $a_{v u} > 0$, then some $u' \in N(v)$ satisfies $c(\sigma(u')) > c(\sigma(v))$ and $a_{v u'} > 0$.
    \end{claim}

    \begin{proof}
        The contribution of $u$ to $S(v)$ is negative. Since $S(v) = 0$, there must therefore exist some $u' \in N(v)$ such that the contribution of $u'$ to $S(v)$ is positive. Given that $a_{v u'} \ge 0$, it must hold that $a_{v u'} > 0$ and thus also $c(\sigma(u')) - c(\sigma(v)) > 0$. This proves the claim.
    \end{proof}

    Now, let $v u \in E(H)$ be such that $c(\sigma(v)) < c(\sigma(u)$) and $a_{v u} < 0$. Note that the contribution of $u$ to $S(v)$ is negative. Since $S(v) = 0$, there must therefore exist some $w \in N(v) \setminus \{u\}$ whose contribution to $S(v)$ is positive. Since the linear combination of the cycle-equations is convex, $a_{v u}$ is the only coefficient that is negative, and so we must have $a_{v w} > 0$. In order for the contribution of $w$ to $S(v)$ to be positive, it then follows that $c(\sigma(w)) > c(\sigma(v))$.
    
    Pick a maximal path $v_1 \dots v_\ell$ with $v_1 = v$, $c(\sigma(v_1)) < \dots < c(\sigma(v_\ell))$, and $a_{v_i v_{i+1}} > 0$ for all $i \in [\ell-1]$. The path $v w$ shows that $\ell \ge 2$. If $v_\ell \neq u$, then we know that $a_{v_\ell u'} \ge 0$ for all $u' \in N(v_\ell)$. Moreover, note that $v_{\ell-1} \in N(v_\ell)$ satisfies $c(\sigma(v_{\ell-1})) < c(\sigma(v_\ell))$ and $a_{v_{\ell-1} v_\ell} > 0$ by the choice of the path. Thus, \cref{clm:posincidentedges} implies that there exists some $v_{\ell+1} \in N(v_\ell)$ with $c(\sigma(v_{\ell+1})) > c(\sigma(v_\ell))$ and $a_{v_\ell v_{\ell+1}} > 0$. This contradicts the maximality of the path.

    Therefore, it must hold that $v_\ell = u$. But then $v_1 \dots v_\ell$ is a cycle with $c(\sigma(v_1)) < \dots < c(\sigma(v_\ell))$, which means that the associated cycle-equation is convex as required.
\end{proof}

\subsection{From equations to graphs}

Thus far we have only discussed cases where non-abundance of $\cE(H,\sigma,c)$ determines non-abundance of $(H,\sigma)$.
But in general, we may ask whether certain properties of the equations $\Eqs(H, \sigma) = \bigcup_c \set{\Eq(H, \sigma, c)}$ associated to an $F$-coloured graph $(H, \sigma)$ determine whether this graph is $F$-abundant or not. 
First, we consider the special case where the graph $H$ is a cycle $C$. In that case, each $\Eq(C, \sigma, c)$ is a single equation, and we show that the properties of $\Eqs(H, \sigma)$ determine $F$-abundance.

\begin{theorem}\label{thm:cycleabundance}
    Let $(C, \sigma)$ be an $F$-coloured cycle. The following are equivalent:
    \begin{enumerate}[ref=(\arabic*)]
        \item\label{thm:cycleabundance-itm1} $(C,\sigma)$ is $F$-abundant,
        \item\label{thm:cycleabundance-itm2} No equation from $\Eqs(C, \sigma)$ is convex, and
        \item\label{thm:cycleabundance-itm3} Every equation from $\Eqs(C, \sigma)$ has genus at least two.
    \end{enumerate}
\end{theorem}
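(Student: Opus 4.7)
I plan to establish the three implications $(1)\Rightarrow(2)$, $(2)\Leftrightarrow(3)$, and $(3)\Rightarrow(1)$ separately (all cycle indices are read modulo $\ell$).

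For $(1)\Rightarrow(2)$, I would argue contrapositively via the Ruzsa--Szemer\'edi construction. If $\Eq(C,\sigma,c)$ is convex for some $c$, then a Behrend-type construction for convex equations (\cite[Thm.~2.3]{Ruzsa}) yields $\cA\subset[N]$ of size $N^{1-o(1)}$ with no non-trivial solutions. The graph $\RS(F,N,\cA)$ is then $\Omega(|\cA|/N)$-far from $F$-free, while every copy of $(C,\sigma)$ in it corresponds to a solution of $\Eq(C,\sigma,c)$ in $\cA^\ell$; since only the $|\cA|$ trivial all-equal solutions remain, the total number of copies is $\cO(N\cdot|\cA|)=N^{2-o(1)}$, which is sub-polynomial in the density parameter for $|C|\geq 3$.

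For $(2)\Leftrightarrow(3)$, I would show both conditions are equivalent to the combinatorial statement that the cyclic sequence $\sigma(v_1),\ldots,\sigma(v_\ell)$ has a repeated entry. Indeed, if $\sigma(v_i)=\sigma(v_j)$ with $i<j$, then for every injection $c$ the partial sum of cycle-equation coefficients over $T=\{i,i+1,\ldots,j-1\}$ telescopes to $c(\sigma(v_j))-c(\sigma(v_i))=0$ (yielding $(3)$), and convexity is ruled out because it would force $(c(\sigma(v_k)))_k$ to be strictly monotone except at one wraparound descent, incompatible with a repeated value (yielding $(2)$). Conversely, if all $\sigma(v_k)$ are distinct, pick $c$ with $c(\sigma(v_1))<\cdots<c(\sigma(v_\ell))$; the resulting cycle equation $x_{v_1v_2}+\cdots+x_{v_{\ell-1}v_\ell}-(\ell-1)x_{v_\ell v_1}=0$ is convex and of genus one, so both $(2)$ and $(3)$ fail.

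For $(3)\Rightarrow(1)$, we may now assume $\sigma(v_i)=\sigma(v_j)=a$ for some $i<j$ with $j-i\geq 2$. Set $b'=\sigma(v_{i+1})$ and $d'=\sigma(v_{i-1})$; both are neighbours of $a$ in $F$. The plan is to synthesise $(C,\sigma)$ using the machinery of \cref{sec:Abundance}. First, construct an $F$-abundant $C_4$ with colour sequence $(a,b',a,d')$ by four applications of \cref{lem:peel_abundant} starting from a single $a$-vertex: attach a $d'$-vertex, then another $a$-vertex, and finally a $b'$-vertex whose monochromatic neighbourhood consists of the two existing $a$-vertices (the case $s=2$ of the lemma). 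Second, observe that any $F$-coloured path is $F$-abundant by iterating \cref{lem:peel_abundant}. Finally, apply \cref{lem:glue_blowup_abundant} twice with $|U'|=|V'|=1$: first to replace the $C_4$-edge between $b'$ and the second $a$-vertex by the path $v_{i+1}v_{i+2}\cdots v_j$, and then to replace the edge between that $a$-vertex and $d'$ by the path $v_jv_{j+1}\cdots v_{i-1}$. Each substitution preserves $F$-abundance, and a straightforward tracking of identifications shows that the resulting $F$-coloured graph is precisely $(C,\sigma)$.

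The main obstacle is $(3)\Rightarrow(1)$. The naive strategy of building $(C,\sigma)$ by adding one vertex at a time via \cref{lem:peel_abundant} breaks down whenever the repeated colour $a$ occurs at distance greater than two along $C$, since no vertex of $C$ then has both of its neighbours in the same colour class. The key idea is to localise the monochromatic adjacency inside a small $C_4$ gadget (itself constructed by \cref{lem:peel_abundant} with $s=2$), and only afterwards to stretch two of its edges into the two arcs of $C$ using \cref{lem:glue_blowup_abundant}.
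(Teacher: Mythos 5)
Your proposal is correct and follows essentially the same route as the paper: $(1)\Rightarrow(2)$ via the Ruzsa--Szemer\'edi construction, the observation that both $(2)$ and $(3)$ reduce to the existence of a repeated colour $\sigma(v_i)=\sigma(v_j)$ on the cycle, and $(3)\Rightarrow(1)$ by starting from the $F$-abundant $C_4$ on $v_i,v_{i+1},v_j,v_{i-1}$ and then stretching two of its edges into the two arcs of $C$ via \cref{lem:glue_blowup_abundant}. The only (inconsequential) discrepancies are cosmetic: you count ``four'' applications of \cref{lem:peel_abundant} where three suffice, and you organise the implications as $(1)\Rightarrow(2)$, $(2)\Leftrightarrow(3)$, $(3)\Rightarrow(1)$ whereas the paper proves $(2)\Rightarrow(1),(3)$ and $(3)\Rightarrow(2)$ directly, but the content is the same.
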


\begin{proof}
    \ref{thm:cycleabundance-itm1} $\Rightarrow$ \ref{thm:cycleabundance-itm2}: If $\Eq(C, \sigma, c)$ is convex, we know from above that this would allow us to form a Ruzsa-Szemer\'{e}di graph with too few copies of $(C,\sigma)$, contradicting \ref{thm:cycleabundance-itm1}.

    \ref{thm:cycleabundance-itm2} $\Rightarrow$ \ref{thm:cycleabundance-itm1}, \ref{thm:cycleabundance-itm3}: Let $C = v_1 \dots v_\ell$. We claim that there exist $i \neq j$ with $\sigma(v_i) = \sigma(v_j)$. Indeed, otherwise we could choose $c$ in such a way that $c(\sigma(v_i)) = i$ for all $i \in [\ell]$, but then the equation $\Eq(C, \sigma, c)$ is $\sum_{i=1}^{\ell-1} x_i - (\ell-1) x_\ell = 0$ which is a convex equation, a contradiction. Note that $v_i$ and $v_j$ cannot be adjacent because $\sigma(v_i) = \sigma(v_j)$, and so $C$ consists of two paths $P$ and $Q$ between $v_i$ and $v_j$, each of length at least $2$.

    Given any injection $c \colon V(F) \to [n]$, we now conclude that $\Eq(C, \sigma, c)$ has genus at least two by partitioning the variables into the two sets of variables corresponding to the edges in $P$ and $Q$ respectively. This proves \ref{thm:cycleabundance-itm3}.

    To obtain \ref{thm:cycleabundance-itm1}, we use \cref{lem:glue_blowup_abundant}. Without loss of generality we may assume that $v_i v_{i+1} \in P$ and $v_{i-1} v_i \in Q$. Observe that $(v_i v_{i+1} v_j v_{i-1}, \sigma)$ is an $F$-coloured $C_4$ that is  $F$-abundant by \cref{lem:peel_abundant} 
    (or indeed by Cauchy-Schwarz if one is so inclined). 
    Furthermore, $(P \setminus v_i, \sigma)$ and $(Q \setminus v_i, \sigma)$ are $F$-abundant. We now replace the edge $v_{i+1} v_j$ by $(P \setminus v_i, \sigma)$ and the edge $v_{i-1} v_j$ by $(Q \setminus v_i, \sigma)$. The resulting graph $(C, \sigma)$ is then $F$-abundant by \cref{lem:glue_blowup_abundant}, as required.

    \ref{thm:cycleabundance-itm3} $\Rightarrow$ \ref{thm:cycleabundance-itm2}: This is true because an equation with genus at least two cannot be convex.
\end{proof}

In the case of triangle-abundance, we show that even if $H$ is not a cycle, we can sometimes derive triangle-abundance (and in fact something much stronger) from properties of $\Eqs(H, \sigma)$. In this setting, an $F$-colouring $\sigma$ of $H$ simply corresponds to a $K_3$-colouring of $H$ with colours $1$, $2$, and $3$. Let a \defn{colour homomorphism} be a homomorphism between two coloured graphs $(H, \sigma)$ and $(H', \sigma')$ that preserves colours. Let $P_\infty^3 = (P_\infty, \phi)$ denotes the infinite path $P_\infty$ with a cyclic $3$-colouring $\phi$ of its vertices.
Note that if $(H, \sigma)$ is colour homomorphic to $P_\infty^3$, then $(H, \sigma)$ is triangle-abundant by \cref{lem:peel_abundant}. We prove the following.

\begin{theorem}\label{lem:symmetric_is_path_hom}
    Let $(H, \sigma)$ be a $K_3$-coloured graph. Then $\Eqs(H, \sigma)$ consists solely of symmetric equations if and only if $(H, \sigma)$ is colour homomorphic to $P_\infty^3$.
\end{theorem}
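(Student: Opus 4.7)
The plan is to prove both directions by analysing, for each cycle $C = v_1 v_2 \cdots v_\ell v_1$ of $H$, the counts $n_{ij}(C)$ of edges $v_t v_{t+1}$ with $(\sigma(v_t), \sigma(v_{t+1})) = (i, j)$ (for distinct $i, j \in \{1,2,3\}$) and linking these counts to the coefficients of the associated cycle-equation.

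For the ``if'' direction, suppose $f \colon V(H) \to V(P_\infty) = \mathbb{Z}$ is a colour homomorphism, so $f(v) \equiv \sigma(v) \pmod 3$ (under the cyclic colouring) and $|f(u) - f(v)| = 1$ for every edge $uv$. A cycle $C$ projects under $f$ to a closed $\pm 1$ walk on $\mathbb{Z}$, so at every half-integer level $k + \tfrac{1}{2}$ the numbers of upward and downward crossings coincide; pair them. Each pair consists of one edge of colour-type $(k \bmod 3, (k{+}1) \bmod 3)$ and one of type $((k{+}1) \bmod 3, k \bmod 3)$, whose coefficients in the cycle-equation (for any injection $c$) are exact opposites. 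This partitions the cycle's edges into pairs and exhibits the cycle-equation in the symmetric form $\sum a_i x_i = \sum a_i y_i$.

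For the ``only if'' direction, first derive $n_{ij}(C) = n_{ji}(C)$ for every cycle $C$ of $H$ and every pair $\{i,j\} \subset \{1,2,3\}$. Apply the symmetry hypothesis with the single injection $c(1) = 1$, $c(2) = 2$, $c(3) = 4$, so that the six possible signed coefficients $\pm 1, \pm 2, \pm 3$ are pairwise distinct. Symmetry forces the multiplicity of each coefficient $+b$ in the equation to equal the multiplicity of $-b$, yielding the three equalities. Next build $f$: on each connected component pick a root $v_0$ and an integer $f(v_0)$ congruent to $\sigma(v_0)$ mod $3$; call an edge $uv$ \emph{forward} if $(\sigma(u), \sigma(v)) \in \{(1,2), (2,3), (3,1)\}$ and \emph{backward} otherwise; and define $f(v)$ by summing $+1$ per forward edge and $-1$ per backward edge along any walk from $v_0$ to $v$. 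Well-definedness reduces to the signed sum along any cycle of $H$ being zero, which is exactly
\begin{equation*}
    (n_{12} + n_{23} + n_{31}) - (n_{21} + n_{32} + n_{13}) = 0
\end{equation*}
from the previous step. By construction $|f(u) - f(v)| = 1$ on each edge, and $f$ and $\sigma$ change by the same value modulo $3$ across each edge (namely $+1$ on forward, $-1$ on backward), so $f(v) \equiv \sigma(v) \pmod 3$ throughout; thus $f$ is the required colour homomorphism.

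The crux is the first step of the ``only if'' direction: choosing an injection $c$ for which symmetry of the cycle-equation collapses to the pointwise equalities $n_{ij} = n_{ji}$. The choice $c = (1,2,4)$ makes the six candidate signed coefficients all distinct, so no non-trivial pairing across distinct colour pairs is possible. Once the equalities $n_{ij} = n_{ji}$ are in hand, constructing $f$ is routine.
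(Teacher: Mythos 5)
Your proof is correct and follows essentially the same route as the paper's: both arguments reduce the statement to the fact that for every cycle $C$, the per-pair edge counts satisfy $n_{ij}(C)=n_{ji}(C)$ (equivalently, the paper's ``wrap'' vanishes), and both build the colour homomorphism to $P^3_\infty$ as a potential function whose well-definedness follows from that cycle condition. Your two local refinements are nice: the half-integer crossing-number pairing for the ``if'' direction exhibits the symmetric pairing explicitly, and the choice $c=(1,2,4)$ in the ``only if'' direction extracts $n_{ij}=n_{ji}$ directly from a single injection rather than relying implicitly (as the paper's proof of its wrap-vs-symmetry claim does) on the circulation constraints that a closed walk imposes. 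One small point that both you and the paper elide: well-definedness of $f$ a priori needs zero signed sum along arbitrary \emph{closed walks}, not just simple cycles; this follows by decomposing a closed walk into simple cycles and two-step back-and-forths, or by working with a spanning tree and checking only fundamental cycles.
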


\begin{proof}
    A \defn{walk} $W$ in a graph is a sequence of vertices $v_0, \dots, v_\ell$ such that $v_{i-1} v_i$ is an edge for every $i \in [\ell]$. We define the \defn{wrap} of an oriented edge $u v$ to be $1$ if $\sigma(u) + 1 \equiv \sigma(v) \pmod{3}$ and $-1$ otherwise. Let $\wrap(W) = \sum_{i=1}^\ell \wrap(v_{i-1} v_i)$. A cycle $C$ is \defn{wrapped}\footnote{Any homotopically inclined reader is free to reintepret this.} if $\wrap(C) \neq 0$.

    \begin{claim}\label{clm:wrapcolhom}
        If $(W, \sigma)$ is a $K_3$-coloured walk, then there is at most one colour homomorphism $\gamma\colon (W, \sigma) \to P_\infty^3$ (up to shifts of $P_\infty^3$). Moreover, if $\gamma$ exists, then $\wrap(W) = \wrap(\gamma(W))$.
    \end{claim}

    \begin{proof}
        Note that for every colour $c$, every vertex in $P_\infty^3$ has at most one neighbour of colour $c$. Thus, once we pick $\gamma(v_0)$ (this choice is unique up to shifts of $P_\infty^3$), this determines $\gamma(v_i)$ for all $i \in [\ell]$ as the unique neighbour of $\gamma(v_{i-1})$ with colour $\sigma(v_i)$. Hence, there can be at most one colour homomorphism $\gamma\colon (W, \sigma) \to P_\infty^3$ (up to shifts of $P_\infty^3$).

        Moreover, if $\gamma$ exists, we have $\sigma(v_i) = \phi(\gamma(v_i))$ for all $i$, so $\wrap(v_{i-1} v_i) = \wrap(\gamma(v_{i-1}) \gamma(v_i))$ and thus $\wrap(W) = \wrap(\gamma(W))$.
    \end{proof}

    \begin{claim}\label{clm:duality}
        Let $(H, \sigma)$ be a $K_3$-coloured graph. Then, $(H, \sigma)$ is colour homomorphic to $P_\infty^3$ if and only if no cycle in $H$ is wrapped.
    \end{claim}

    \begin{proof}
        Suppose that some cycle $C$ in $H$ is wrapped. If $(H, \sigma)$ is colour homomorphic to $P_\infty^3$, this yields a colour homomorphism $\gamma\colon (C, \sigma) \to P_\infty^3$. By \cref{clm:wrapcolhom}, we know that $\wrap(C) = \wrap(\gamma(C))$. However, $\gamma(C)$ is a walk from a vertex back to itself, and such walks have wrap $0$ in $P_\infty^3$, contradicting $\wrap(C) \neq 0$.

        Conversely, suppose that no cycle in $H$ is wrapped. Let $v \in V(H)$ be arbitrary. We may identify the vertices of $P_\infty^3$ with $\bZ$ such that $\phi(n) \equiv \phi(n-1) + 1 \pmod{3}$ for all $n \in \bZ$, and we may assume that $\sigma(v) = \phi(0)$. Define $\gamma$ by $\gamma(u) = \wrap(W)$ where $W$ is an arbitrary walk in $H$ from $v$ to $u$. This is well defined because two distinct walks from $v$ to $u$ with different wrap would imply the existence of a wrapped cycle. It is straightforward to check that $\gamma$ is a colour homomorphism.
    \end{proof}

    \begin{claim}\label{clm:wrapnotsym}
        Let $(C, \sigma)$ be a $K_3$-coloured cycle. Then, $(C, \sigma)$ is wrapped if and only if some equation in $\Eqs(C, \sigma)$ is not symmetric.
    \end{claim}

    \begin{proof}
        Suppose that $(C, \sigma)$ is wrapped. Let $c: [3] \to [n]$ be the identity and consider the equation $\Eq(C, \sigma, c)$. If $\wrap(C) > 0$, then the coefficients $c(2) - c(1)$, $c(3) - c(2)$, and $c(1) - c(3)$ appear more often than their inverses in $\Eq(C, \sigma, c)$, and so there are more positive than negative coefficients. The converse holds if $\wrap(C) < 0$. Therefore, the equation cannot be symmetric since symmetric equations have an equal number of positive and negative coefficients.

        Conversely, suppose that $(C, \sigma)$ is not wrapped. Let $c: [3] \to [n]$ be an arbitrary injection. Since $\wrap(C) = 0$, the coefficients $c(2) - c(1)$, $c(3) - c(2)$, and $c(1) - c(3)$ appear exactly as often as their inverses in $\Eq(C, \sigma, c)$. Hence, $\Eq(C, \sigma, c)$ is symmetric, and because $c$ was arbitrary, every equation in $\Eqs(C, \sigma)$ must be symmetric.
    \end{proof}

    By \cref{clm:wrapnotsym} we know that every equation of $\Eqs(H, \sigma)$ is symmetric if and only if no cycle in $H$ is wrapped. However, \cref{clm:duality} shows that this is equivalent to $(H, \sigma)$ being colour homomorphic to $P_\infty^3$. This concludes the proof.
\end{proof}

\begin{remark}
    Interestingly in the case of $K_3$-coloured graphs, the only graphs for which the Tur\'{a}n \emph{degree} density\footnote{For $(G,\sigma)$ this is defined as the infimum $\delta>0$ such that any  $K_3$-coloured graph $H$ of sufficiently large order with minimum degree (between each pair of parts) at least $\delta |H|$ contains a copy of $(G,\sigma)$.} is zero are those colour homomorphic to $P^3_\infty$.\footnote{To see it is positive for other graphs, let $H$ be the tripartite graph with vertex set $([N],[N],[N])$, and connect two vertices $x,y$ if $y-x \mod N$ is less than $\eps N$. The resulting graph has no short wrapped cycles.}
\end{remark}

This concludes what we can prove, but two central directions remain. 
The first is to find an example of a tripartite triangle-free graph that is not $K_3$-abundant.
We have shown that one cannot do this through the Ruzsa-Szemer\'{e}di construction with convex equations, but the graph $G_3$ would be a good candidate for investigation.
The second question is understanding in what cases the abundance of $\cE(H,\sigma)$ determines the abundance of $(H,\sigma)$ (the opposite implication is given by the Ruzsa-Szemer\'{e}di construction). 

In both cases a new method of constructing non-abundant graphs would be hugely helpful. 
However, it is unclear how difficult this is. 
Another approach is to extend Ruzsa's work, either by understanding $R_E(N)$ for a larger class of equations or by understanding better systems of equations. 
This latter undertaking was begun by Shapira \cite{shapira2006behrend}.
Concretely, in an attempt to extend \cref{lem:symmetric_is_path_hom} beyond symmetric equations,  one could ask
 whether or not there are genus-type properties of \emph{systems} of equations that, if satisfied by all elements of $\cE(G,\sigma)$,
would imply that $(G,\sigma)$ is $F$-abundant. 
See the following section for all open questions.

\section{Open questions}\label{sec:questions}

Thus far we can only show that $H$ is $F$-abundant by showing that some $F$-coloured $(H, \sigma)$ is $F$-abundant. 
While this is clearly sufficient, we conjecture that is also necessary.

\begin{conjecture}\label{conj:reduction_to_coloured}
    A graph $H$ is $F$-abundant if and only if there is an $F$-colouring $\sigma$ of $H$ such that $(H, \sigma)$ is $F$-abundant.
\end{conjecture}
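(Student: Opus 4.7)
The easy direction of the equivalence is immediate. If some $F$-colouring $\sigma$ of $H$ makes $(H, \sigma)$ $F$-abundant, then $H$ itself is $F$-abundant: given any $n$-vertex graph $G$ that is $\e$-far from $F$-free, apply \cref{lem:eunifsubgraph} to extract a subgraph $G'$ on $\Omega(\sqrt{\e}) \cdot n$ vertices that is uniformly $\Omega(\e)$-far from $F$-free, equipped with a canonical $F$-partition. The $F$-abundance of $(H, \sigma)$ then supplies $\poly(\e) \cdot n^{\abs{H}}$ copies of $(H, \sigma)$ in $G'$ respecting this partition, and each such copy is in particular a copy of $H$ in $G$.

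For the harder converse direction, my plan is to argue by contradiction. Suppose that $H$ is $F$-abundant but that no $(H, \sigma)$ is $F$-abundant. Since $V(H)$ and $V(F)$ are finite, the set of $F$-colourings of $H$ is finite; enumerate them $\sigma_1, \ldots, \sigma_m$. The non-$F$-abundance of each $(H, \sigma_i)$ produces, for every polynomial $P$, a graph $G_i$ that is uniformly $\e$-far from $F$-free with a specified $F$-partition whose count of $(H, \sigma_i)$ copies is at most $P(\e) \cdot \abs{G_i}^{\abs{H}}$. The key observation justifying this approach is that, given a fixed uniformly $\e$-far-from-$F$-free graph with a fixed $F$-partition, every copy of $H$ respects the partition in the sense of inducing a well-defined $F$-colouring, so the copies of $H$ decompose as a disjoint union over $\sigma_1, \ldots, \sigma_m$ of copies of $(H, \sigma_i)$. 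Thus, if one could produce a single graph $G^\star$ that simultaneously has few copies of each $(H, \sigma_i)$, the $F$-abundance of $H$ would be contradicted.

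The main obstacle lies precisely in achieving this simultaneous control. A na\"ive disjoint union of counterexamples fails: a bad example for $(H, \sigma_i)$ typically contains many copies of $(H, \sigma_j)$ for $j \neq i$ relative to its own $F$-partition, and these survive into the union; similarly, iterating blow-ups preserves the asymptotic density of copies and so cannot close the gap. Any successful approach therefore needs either to exploit automorphisms of $F$ (when these act transitively on $F$-colourings of $H$, one can relabel parts to transfer non-abundance between colourings and reduce to a single orbit) or to find a combination procedure — perhaps via carefully designed random blends or tensor-like products — that suppresses all $(H, \sigma_i)$ at once. A more abstract alternative is a compactness route via graphons: for each $i$, extract a limit object from the counterexample sequence for $(H, \sigma_i)$ and attempt to amalgamate these limits into a single $F$-partitioned graphon that is far from $F$-free yet contains vanishingly few copies of $H$.

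Viewed abstractly, the conjecture asserts a minimax equality: $F$-abundance of $H$ is the statement that $\min_G \max_\sigma \#\{(H,\sigma)\text{-copies in }G\}$ is polynomially large, while $F$-abundance of some $(H,\sigma)$ is the statement that $\max_\sigma \min_G \#\{(H,\sigma)\text{-copies in }G\}$ is polynomially large. The max-min inequality gives one direction for free, but the reverse inequality appears to require genuinely new input, since there is no obvious saddle-point principle available for abundance. I would expect that resolving this will force either a symmetry-based reduction to a single orbit of colourings combined with a direct combinatorial argument within each orbit, or an entirely new limit-object framework in which non-abundance passes to suitable limits — and either route is where I would expect the real difficulty to lie.
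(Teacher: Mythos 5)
This statement is labelled as a \emph{conjecture} in the paper, and the paper offers no proof of it. You have correctly recognised this: your ``proposal'' proves only the easy direction and then honestly diagnoses why the converse is open, rather than claiming a proof. I will address the two parts separately.

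Your easy direction is correct and matches what the paper implicitly observes (``To show that a graph $H$ is $F$-abundant, it suffices to show that there is some $F$-colouring $\sigma$ of $H$ such that $(H,\sigma)$ is $F$-abundant.''). Given $G$ that is $\e$-far from $F$-free, \cref{lem:eunifsubgraph} supplies a subgraph $G'$ of order at least $\Omega(\sqrt\e)\,n$ that is uniformly $\Omega(\e)$-far from $F$-free with a canonical $F$-partition; applying the $F$-abundance of $(H,\sigma)$ inside $G'$ and using $\abs{G'}\ge\Omega(\sqrt\e)\,n$ gives $\poly(\e)\cdot n^{\abs H}$ copies of $H$ in $G$. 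One small point worth making explicit: the polynomial you end up with degrades by a factor of roughly $\e^{\abs H/2}$ coming from $\abs{G'}^{\abs H}\ge(\Omega(\sqrt\e)\,n)^{\abs H}$, but this is still $\poly(\e)$, so the argument goes through.

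For the converse direction you have not supplied a proof, and indeed the paper does not have one either — that is precisely why it is stated as \cref{conj:reduction_to_coloured}. Your diagnosis is sound: the decomposition of $H$-copies in a uniformly-$\e$-far $F$-partite graph into $(H,\sigma_i)$-copies over the finitely many $F$-colourings $\sigma_1,\dots,\sigma_m$ is correct (each embedding into an $F$-partition induces a homomorphism $H\to F$), so the obstruction is exactly the lack of a single witness graph that is simultaneously bad for every $\sigma_i$. Your minimax reframing — $F$-abundance of $H$ as ``$\min_G\max_\sigma$ large'' versus $\exists\sigma\,(H,\sigma)$-abundance as ``$\max_\sigma\min_G$ large'' — is a clean way to see that the easy direction is the trivial max-min inequality while the conjecture is the nontrivial saddle-point equality. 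To be clear about what is missing: none of the amalgamation strategies you sketch (disjoint unions, blow-ups, tensor-like blends, graphon limits) is known to work, and your own commentary correctly explains why the naive ones fail. So the gap is real and coincides with the open problem; the proposal is a reasonable analysis of the difficulty but should not be presented as resolving it.
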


We say a \defn{family $\cF$ of graphs is triangle-abundant} if there exists $C>0$ such that in any graph $G$ that contains at least $\eps n^2$ disjoint triangles, some $F\in \cF$ has density at least $\eps^C$.
The following is much stronger than \cref{conj:reduction_to_coloured}, but we are not confident enough to call it a conjecture. 
Perhaps examples of non-compactness in generalised Tur\'{a}n numbers\footnote{The maximum number of copies of $G$ in an $n$-vertex $F$-free graph. } \cite{alon2016many} could inspire counter-examples.

\begin{question}\label{conj:abundant_families}
    Is it true that a  family $\cF$ of (coloured) graphs is triangle-abundant if only if $\cF$ contains a triangle-abundant (coloured) graph.
\end{question}

Returning to the classification  of abundance,
a major open question is whether or not you can glue graphs on monochromatic vertex sets (if so then all \defn{pinchable} graphs are $K_3$-abundant where pinchable means every subgraph has a monochromatic vertex cut set). 
The most basic open case is for cut sets of size two.

\begin{question}\label{qu:2cutset}
    Suppose $(F, \sigma)$ and $(F', \sigma')$ are triangle-abundant. Let $u, v \in F$ and $u', v' \in F'$ with $\sigma(u) = \sigma(v) = \sigma(u') = \sigma(v')$. If we identify $u$ with $u'$ and $v$ with $v'$ is the resulting graph triangle-abundant?
\end{question}

\begin{remark}
    It is essential that $\sigma(u) = \sigma(v) = \sigma(u') = \sigma(v')$. For example, $(C_4, \sigma)$ is not $K_4$-abundant where $C_4 = x_1 x_2 x_3 x_4$ and $\sigma(x_i) = i$.
\end{remark}

On the connection between equations and graphs,
we re-iterate the (admittedly informal) challenge to extend \cref{lem:symmetric_is_path_hom} to the case where $\cE$ satisfies some genus condition, rather than a symmetry condition. 
The work of Shapira ~\cite{shapira2006behrend} on systems of equations may be useful.
The following remarkable possibility also remains.

\begin{question}
    Is it true that an $F$-coloured graph $(G,\sigma)$ is $F$-abundant if and only if all systems $S\in \cE(G,\sigma)$ are abundant.
\end{question}

If it is the case that the only unavoidable equations are equations of genus at least $2$ (i.e. if there is a positive answer to Ruzsa's question), then it is plausible that the methods contained in this paper yield all abundant graphs.
If on the other hand the only avoidable equations are convex equations, then it seems likely that there are many more (kinds of) abundant graphs, and it may even be that the only non-abundant graphs are increasing-cycle-unavoidable graphs.
If the latter holds then all tripartite triangle-free graphs are triangle-abundant. 
We thus reiterate the question of Gishboliner, Shapira, and Wigderson \cite{GSW23} which asks for a single tripartite triangle-free graph that can be (unconditionally) shown not to be triangle-abundant.

In the additive context, determining $R_E(N)$ for any non-convex equation $E$ of genus one remains wide open. 
Any progress would be exciting.
For example, the following equation has genus one and is not convex. Its solutions are sets of three numbers and two numbers whose averages are equal.

\begin{question}
    Let $E$ be given by $2x_1+2x_2+2x_3 = 3x_4 +3x_5$. 
    Is $E$ avoidable?
\end{question}

Finally, the following additive question is both relevant and interesting in its own right, and may be solvable without classifying any abundant and avoidable equations. 
It has a super-saturation flavour to it.
We recall that an equation $E$ in $\ell$ variables is \defn{abundant} if there exists constants  $\gamma,C>0$ such that any subset $\cA\subset [N]$ of order  $\eps N$ contains at least $\gamma\eps^C N^{\ell-1}$ solutions to $E$, and that it is \defn{avoidable} if $R_E(N) = N^{1-o(1)}$.

\begin{question}
    For an equation, does unavoidability imply abundance?
\end{question}

\section*{Acknowledgements}

The second author would like to thank Yuval Wigderson for a stimulating talk (on \cite{GSW23}) and encouraging conversations at RSA 2023. 
The authors would also like to thank Boris Bukh and Thomas Bloom for informative discussions on the current state of Ruzsa's project. 
{
\fontsize{11pt}{12pt}
\selectfont
	
\hypersetup{linkcolor={red!70!black}}
\setlength{\parskip}{2pt plus 0.3ex minus 0.3ex}

}


\begin{thebibliography}{CFSZ21}
	\providecommand{\url}[1]{\texttt{#1}}
	\providecommand{\urlprefix}{\textsc{url:} }
	\expandafter\ifx\csname urlstyle\endcsname\relax
	\providecommand{\doi}[1]{doi:\discretionary{}{}{}#1}\else
	\providecommand{\doi}{doi:\discretionary{}{}{}\begingroup
		\urlstyle{rm}\Url}\fi
	
	\bibitem[ABE20]{AlonBenEliezermatrix}
	\textsc{Noga Alon} and \textsc{Omri Ben-Eliezer} (2020).
	\newblock \href{https://doi.org/10.1007/s11083-019-09494-3}{Efficient removal
		lemmas for matrices}.
	\newblock \emph{Order} \textbf{37}(1), 83--101.
	
	\bibitem[AF15]{AlonFoxinduced}
	\textsc{Noga Alon} and \textsc{Jacob Fox} (Feb. 2015).
	\newblock \href{https://doi.org/10.1017/S0963548314000765}{Easily testable
		graph properties}.
	\newblock \emph{Combinatorics, Probability and Computing} \textbf{24}(4),
	646--657.
	
	\bibitem[AFN07]{AFNmatrix}
	\textsc{Noga Alon}, \textsc{Eldar Fischer}, and \textsc{Ilan Newman} (Aug.
	2007).
	\newblock \href{https://doi.org/10.1137/050627915}{Efficient testing of
		bipartite graphs for forbidden induced subgraphs}.
	\newblock \emph{SIAM Journal on Computing} \textbf{37}(3), 959--976.
	
	\bibitem[Alo02]{Alondependence}
	\textsc{Noga Alon} (Oct. 2002).
	\newblock \href{https://doi.org/10.1002/rsa.10056}{Testing subgraphs in large
		graphs}.
	\newblock \emph{Random Structures \& Algorithms} \textbf{21}(3-4), 359--370.
	
	\bibitem[AS04]{AlonShapiradirected}
	\textsc{Noga Alon} and \textsc{Asaf Shapira} (Nov. 2004).
	\newblock \href{https://doi.org/10.1016/j.jcss.2004.04.008}{Testing subgraphs
		in directed graphs}.
	\newblock \emph{Journal of Computer and System Sciences} \textbf{69}(3),
	353--382.
	
	\bibitem[AS06]{AlonShapirainduced}
	\textsc{Noga Alon} and \textsc{Asaf Shapira} (Aug. 2006).
	\newblock \href{https://doi.org/10.1017/S0963548306007759}{A characterization
		of easily testable induced subgraphs}.
	\newblock \emph{Combinatorics, Probability and Computing} \textbf{15}(6),
	791--805.
	
	\bibitem[AS16]{alon2016many}
	\textsc{Noga Alon} and \textsc{Clara Shikhelman} (Nov. 2016).
	\newblock \href{https://doi.org/10.1016/j.jctb.2016.03.004}{Many {$T$} copies
		in {$H$}-free graphs}.
	\newblock \emph{Journal of Combinatorial Theory, Series B} \textbf{121},
	146--172.
	
	\bibitem[Beh46]{Behrend}
	\textsc{Felix~Adalbert Behrend} (1946).
	\newblock \href{https://doi.org/10.1073/pnas.32.12.331}{On sets of integers
		which contain no three terms in arithmetical progression}.
	\newblock \emph{Proceedings of the National Academy of Sciences}
	\textbf{32}(12), 331--332.
	
	\bibitem[Buk08]{Bukh}
	\textsc{Boris Bukh} (2008).
	\newblock \href{https://doi.org/10.4064/aa131-1-4}{Non-trivial solutions to a
		linear equation in integers}.
	\newblock \emph{Acta Arithmetica} \textbf{131}(1), 51--55.
	
	\bibitem[CFSZ21]{conlon2021regularity}
	\textsc{David Conlon}, \textsc{Jacob Fox}, \textsc{Benny Sudakov}, and
	\textsc{Yufei Zhao} (2021).
	\newblock \href{https://doi.org/10.1112/jlms.12500}{The regularity method for
		graphs with few 4-cycles}.
	\newblock \emph{Journal of the London Mathematical Society} \textbf{104}(5),
	2376--2401.
	
	\bibitem[Csa21]{Csaba}
	\textsc{B\'{e}la Csaba} (Sep. 2021).
	\newblock \href{http://arxiv.org/abs/2109.12394}{Regular decomposition of the
		edge set of graphs with applications}.
	\newblock arXiv:2109.12394.
	
	\bibitem[EFR86]{EFR}
	\textsc{Paul Erd\H{o}s}, \textsc{P\'{e}ter Frankl}, and \textsc{Vojt\v{e}ch
		R\"{o}dl} (Dec. 1986).
	\newblock \href{https://doi.org/10.1007/BF01788085}{The asymptotic number of
		graphs not containing a fixed subgraph and a problem for hypergraphs having
		no exponent}.
	\newblock \emph{Graphs and Combinatorics} \textbf{2}(2), 113--121.
	
	\bibitem[Erd64]{Erdos1964hypextremal}
	\textsc{Paul Erd\H{o}s} (Sep. 1964).
	\newblock \href{https://doi.org/10.1007/BF02759942}{On extremal problems of
		graphs and generalized graphs}.
	\newblock \emph{Israel Journal of Mathematics} \textbf{2}(3), 183--190.
	
	\bibitem[FL17]{fox2017tight}
	\textsc{Jacob Fox} and \textsc{L\'{a}szl\'{o}~Mikl\'{o}s Lov\'{a}sz} (Dec.
	2017).
	\newblock \href{https://doi.org/10.1016/j.aim.2017.09.037}{A tight bound for
		{G}reen's arithmetic triangle removal lemma in vector spaces}.
	\newblock \emph{Advances in Mathematics} \textbf{321}, 287--297.
	
	\bibitem[FLS18]{fox2018polynomial}
	\textsc{Jacob Fox}, \textsc{L{\'a}szl{\'o}~Mikl{\'o}s Lov{\'a}sz}, and
	\textsc{Lisa Sauermann} (Nov. 2018).
	\newblock \href{https://doi.org/10.1016/j.jcta.2018.06.004}{A polynomial bound
		for the arithmetic $k$-cycle removal lemma in vector spaces}.
	\newblock \emph{Journal of Combinatorial Theory, Series A} \textbf{160},
	186--201.
	
	\bibitem[GS21]{GSproperties}
	\textsc{Lior Gishboliner} and \textsc{Asaf Shapira} (Nov. 2021).
	\newblock \href{https://doi.org/10.1093/imrn/rnz214}{Removal lemmas with
		polynomial bounds}.
	\newblock \emph{International Mathematics Research Notices} \textbf{2021}(19),
	14409--14444.
	
	\bibitem[GSW23]{GSW23}
	\textsc{Lior Gishboliner}, \textsc{Asaf Shapira}, and \textsc{Yuval Wigderson}
	(Jan. 2023).
	\newblock \href{http://arxiv.org/abs/2301.07693}{An efficient asymmetric
		removal lemma and its limitations}.
	\newblock arXiv:2301.07693.
	
	\bibitem[GT22]{GTordered}
	\textsc{Lior Gishboliner} and \textsc{Istv\'{a}n Tomon} (Oct. 2022).
	\newblock \href{https://doi.org/10.5070/c62359151}{Polynomial removal lemmas
		for ordered graphs}.
	\newblock \emph{Combinatorial Theory} \textbf{2}(3), Paper No.\ 3.
	
	\bibitem[Kee11]{Keevash2011hypergraphsurvey}
	\textsc{Peter Keevash} (2011).
	\newblock \href{https://doi.org/10.1017/CBO9781139004114.004}{Hypergraph
		{T}ur{\'a}n problems}.
	\newblock \emph{Surveys in Combinatorics 2011} (edited by \textsc{Robin
		Chapman}), London Mathematical Society Lecture Note Series, 83--140.
	
	\bibitem[KF23]{poset}
	\textsc{Gabor Kun} and \textsc{Panna~Timea Fekete} (2023).
	\newblock \href{https://doi.org/10.5817/CZ.MUNI.EUROCOMB23-096}{A polynomial
		removal lemma for posets}.
	\newblock \emph{Proceedings of EUROCOMB 2023}, 695--701.
	
	\bibitem[NZ04]{NZ04}
	\textsc{Jaroslav Ne\u{s}et\u{r}il} and \textsc{Xuding Zhu} (Jan. 2004).
	\newblock \href{https://doi.org/10.1016/j.jctb.2003.06.001}{On sparse graphs
		with given colorings and homomorphisms}.
	\newblock \emph{Journal of Combinatorial Theory, Series B} \textbf{90}(1),
	161--172.
	
	\bibitem[RS78]{RS78}
	\textsc{Imre~Z. Ruzsa} and \textsc{Endre Szemer\'{e}di} (1978).
	\newblock Triple systems with no six points carrying three triangles.
	\newblock \emph{Combinatorics, {V}olume {II}}, no.~18 in Colloquia Mathematica
	Societatis J\'{a}nos Bolyai, 939--945.
	
	\bibitem[Ruz93]{Ruzsa}
	\textsc{Imre~Z. Ruzsa} (1993).
	\newblock \href{https://doi.org/10.4064/aa-65-3-259-282}{Solving a linear
		equation in a set of integers. {I}}.
	\newblock \emph{Acta Arithmetica} \textbf{65}(3), 259--282.
	
	\bibitem[Sha06]{shapira2006behrend}
	\textsc{Asaf Shapira} (2006).
	\newblock \href{https://doi.org/10.4064/aa122-1-2}{Behrend-type constructions
		for sets of linear equations}.
	\newblock \emph{Acta Arithmetica} \textbf{122}(1), 17--33.
	
	\bibitem[Sid32]{Sidon}
	\textsc{S.~Sidon} (1932).
	\newblock Ein {S}atz {\"u}ber trigonometrische {P}olynome und seine {A}nwendung
	in der {T}heorie der {F}ourier-{R}eihen.
	\newblock \emph{Mathematische Annalen} \textbf{106}, 536--539.
	
	\bibitem[Zha23]{Zhaobook}
	\textsc{Yufei Zhao} (2023).
	\newblock \href{https://doi.org/10.1017/9781009310956}{Graph {T}heory and
		{A}dditive {C}ombinatorics} (Cambridge University Press).
	
\end{thebibliography}
\end{document}